\title{On projective $3$-folds of general type with  $p_g=2$ }
\author{Meng Chen, Yong Hu, Matteo Penegini}
\address{\rm School of Mathematical Sciences, Fudan University, Shanghai 200433, China}
\email{mchen@fudan.edu.cn}
\address{\rm School of Mathematics, Korea Institute for Advanced Study, 85 Hoegiro, Dongdaemun-gu, Seoul 02455, South Korea }
\email{yonghu11@kias.re.kr}
\address{\rm Universit\`a degli Studi di Genova, DIMA Dipartimento di Matematica, I-16146 Genova, Italy }
\email{penegini@dima.unige.it}
\thanks{The first author was supported by National Natural Science Foundation of China (\#11571076, \#11731004) and Program of Shanghai Subject Chief Scientist (\#16XD1400400). The second author was supported by National Researcher Program of National Research Foundation of Korea (Grant No.~2010-0020413). The third author was supported by PRIN 2015 ``Geometry of Algebraic Varieties'' and by GNSAGA of INdAM}
\dedicatory{Dedicated to professor Miles Reid on his seventieth birthday }
\newcommand{\bC}{{\mathbb C}}
\newcommand{\bQ}{{\mathbb Q}}
\newcommand{\bP}{{\mathbb P}}
\newcommand{\roundup}[1]{\lceil{#1}\rceil}
\newcommand{\rounddown}[1]{\lfloor{#1}\rfloor}
\newcommand\lrw{\longrightarrow}
\newcommand\OO{{\mathcal{O}}}
\newcommand\OX{{\mathcal{O}_X}}
\newcommand{\lsgeq}{\succcurlyeq}
\newcommand{\lsleq}{\preccurlyeq}
\newcommand{\Mov}{\text{Mov}}
\newcommand{\simQ}{\sim_{\mathbb{Q}}}
\newtheorem{thm}{Theorem}[section]
\newtheorem{lem}[thm]{Lemma}
\newtheorem{prop}[thm]{Proposition}
\theoremstyle{definition}
\newtheorem{defn}[thm]{Definition}
\newtheorem{rem}[thm]{Remark}
\theoremstyle{remark}
\begin{document}

\maketitle

\begin{abstract}  We classify minimal projective $3$-folds of general type with  $p_g=2$ by studying the birationality of their $6$-canonical maps.
\end{abstract}
\maketitle

\pagestyle{myheadings}
\markboth{\hfill M. Chen, Y. Hu and M. Penegini\hfill}{\hfill On projective $3$-folds of general type with $p_g=2$ \hfill}
\numberwithin{equation}{section}

\section{\bf Introduction}

The study of pluricanonical maps is a fundamental aspect of birational geometry. Let  $\varphi_m$ be the m-canonical map of a projective variety $X$. It is known, by Hacon-McKernan \cite{H-M}, Takayama \cite{Ta} and Tsuji \cite{Tsuji}, that there exists a constant $r_n$ (for any integer $n>0$) such that the pluricanonical map $\varphi_m$ is birational onto its image for all $m\geq r_n$ and for all smooth projective $n$-folds of general type.  Despite the great efforts of several authors, $r_n$ is not explicily  given except for $n\leq 3$. By now it is a classical result for curves and surfaces that $r_1=3$ and $r_2=5$ (see \cite{Bom}). In addition, very recently Chen  and the first author proved the bound  $r_3\leq 57$ (see \cite{EXP1,EXP2, EXP3,Delta18}).

Provided that the property we are studying is birationally invariant, the $3$-dimensional MMP allows us to work with any minimal model $X$ ($\bQ$-factorial with at worst terminal singularities) of  a nonsingular projective $3$-fold of general type.  The aim of this paper is to give a more precise bound for $r_3$ adding some extra information on the nature of $X$.  We have already studied the birational geomentry of projective $3$-folds of general type with  geometric genus $p_g=1$ and $3$ in \cite{CHP}. Here, we shall assume that the geometric genus $p_g$ of $X$ is equal to 2. Under this hypothesis, we are able to classify minimal projective $3$-folds of general type with  $p_g=2$ by studying only the birationality of their $6$-canonical maps. 

We need to introduce some terminology in order to state  main result of this paper: Theorem \ref{thm:main p_g2} which was announced in \cite{CHP}.

By Chen-Chen's series of works in \cite{EXP1,EXP2,EXP3}, there exists a positive integer $m_0\leq 18$ such that $P_{m_0}(X)=h^0(X, m_0K_X)\geq 2$. Hence it is possible to investigate the birational geometry of $X$ by studying the behavior of the $m_0$-canonical map $\varphi_{m_0, X}$.  This strategy proves to be very effective.

\begin{defn} Let $W$ be a $\bQ$-factorial normal projective variety of dimension $n$. Assume that the two maps $\tau:W\dashrightarrow W'$ and $g:W'\lrw S$ satisfy the following properties:
	\begin{itemize}
		\item[(1)] $W'$ is a nonsingular projective variety and $S$ is normal projective of dimension $s<n$;
		\item[(2)] $\tau$ is a dominant birational map and $g$ is a fibration.
	\end{itemize}
	Then we say that the set
	$$\mathcal{F}=\{\hat{F}\subset W|\hat{F}=\tau_*^{-1}(F), F\ \text{is a  fiber of}\ g\}$$
	{\it forms an $(n-s)$-fold class} of $W$, where $\tau_*^{-1}(\cdot)$ denotes the strict transform. In particular, if $n-s=1$ ($=2$), we call it a \emph{curve class} (a \emph{surface class}). The number $(K_W^{n-s}\cdot \tau_*^{-1}(F))$ ($F$ a general fiber of $g$) is called {\it the canonical degree of $\mathcal{F}$}.  Such degree is also denoted as ``$\deg_{c}(\mathcal{F})$''.
\end{defn}

Especially, when $\varphi_{m_0,X}$ is of fiber type (i.e. $\dim \overline{\varphi_{m_0,X}(X)}<\dim X$), the induced fibration (obtained by taking the Stein factorization of $\varphi_{m_0,X}$) automatically forms either a curve class $\mathcal{C}$ or a surface class $\mathcal{S}$ of $X$.  We also say that $X$ is $m_0$-canonically fibred by a curve class  $\mathcal{C}$ (or a surface class $\mathcal{S}$). Note that in our case $m_0=1$, so we can simply say canonically fibred. We can now state the main theorem.

\begin{thm}\label{thm:main p_g2}
	{\em Let $X$ be a minimal projective $3$-fold of general type with $p_g(X)= 2$. Then one of the following statements is true:
		\begin{itemize}
			\item[(1)] $\varphi_{6,X}$ is birational onto its image;
			\item[(2)] $X$ is canonically fibered by a $(2,3)$-surface class of canonical degree $\frac{1}{2}$, in which case $\varphi_{6,X}$ is non-birational;
			\item[(3)] $X$ is canonically fibered by a $(1,2)$-surface class
			(denote by $\mathcal{C}$ the genus $2$ curve class which is naturally induced from $\mathcal{S}$) and one of the following holds:
			\begin{enumerate}
				\item[(i)] $\deg_c(\mathcal{C})=\frac{2}{3}$;
				\item[(ii)] $\deg_c(\mathcal{C})=\frac{4}{5}$;
				\item[(iii)] $P_2(X)=5$, $\deg_c(\mathcal{C})=1$ and $\deg_c(\mathcal{S})=\frac{1}{2}$;
				\item[(iv)] $P_4(X)=14$, $\deg_c(\mathcal{C})=1$ and $\deg_c(\mathcal{S})=\frac{1}{2}$.
			\end{enumerate}
			In this case $\varphi_{6,X}$ is non-birational. 
			\item[(4)] There is an explicit finite set ${\mathbb S}_2$ such that $X$ is canonically fibered by a $(1,2)$-surface class and ${\mathbb B}(X)\in {\mathbb S}_2$, in which case $\varphi_{6,X}$ is non-birational.  (see  Subsection \ref{basket} for the definition of ${\mathbb B}(X)$, the weighted basket of $X$) 	\end{itemize}}
\end{thm}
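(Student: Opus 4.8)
The plan is to exploit the fact that $p_g(X)=2$ forces the canonical map itself to be of fiber type. Since $\dim|K_X|=1$, after a birational modification $\pi\colon X'\to X$ on which the movable part of $|K_{X'}|$ becomes base-point-free, the Stein factorization of $\varphi_{1,X'}$ produces a fibration $f\colon X'\to B$ over a nonsingular projective curve $B$ whose general fiber $S$ is a smooth surface of general type. Thus $X$ is automatically canonically fibered by a surface class $\mathcal{S}$, and the entire problem reduces to controlling how $\varphi_{6,X}$ behaves relative to $f$. I would organize the argument around the single quantity $\xi=(K_X\cdot C)$, where $C$ is a suitable moving curve---the genus-$2$ curve of the class $\mathcal{C}$ when $S$ is a $(1,2)$-surface, and a generic member of a base-point-free pencil on $S$ otherwise---since $\deg_c(\mathcal{S})$, $\deg_c(\mathcal{C})$ and $\xi$ are all tied together by the fibration.

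First I would establish birationality of $\varphi_{6,X}$ under a ``large fiber'' hypothesis. The mechanism is the Chen--Chen style criterion: $\varphi_{6,X}$ is birational once it separates general fibers of $f$ and restricts to a birational map on a general fiber $S$. Separation of general fibers is automatic, since $|6K_X|\supseteq 6\cdot|K_X|$ forces $\varphi_{6,X}$ to dominate $\varphi_{1,X}$, which is exactly the fibration $f$. For the fiberwise statement one uses adjunction together with the vanishing of the fiber self-intersection, giving $K_S\equiv(\pi^*K_X)|_S$ modulo an effective exceptional correction; since $\varphi_{5,S}$ is already birational for any surface of general type (as $r_2=5$), the remaining task is to lift enough sections of $|mK_S|$ from $S$ to $X$. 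Applying Kawamata--Viehweg vanishing to $6K_X-S=(5K_X)+(K_X-S)$, with $5K_X$ nef and big and $K_X-S$ effective, reduces this to a numerical inequality in $K_S^2$, $p_g(S)$ and $\xi$. Whenever $S$ has sufficiently large invariants the inequality holds, yielding case~(1).

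It then remains to treat the surfaces of smallest invariants, which are exactly the $(2,3)$-surface $(K_S^2,p_g(S))=(2,3)$ on the Noether line and the $(1,2)$-surface $(K_S^2,p_g(S))=(1,2)$. Here I would compute $\deg_c(\mathcal{S})$ directly, and in the $(1,2)$ case pass to the genus-$2$ fibration carried by $|K_S|$ to produce the induced curve class $\mathcal{C}$ and compute $\deg_c(\mathcal{C})$. The restriction $K_S\equiv(\pi^*K_X)|_S$ together with the numerics of a genus-$2$ fibration pins the admissible slopes to the short list $\deg_c(\mathcal{C})\in\{\tfrac{2}{3},\tfrac{4}{5},1\}$; when $\deg_c(\mathcal{C})=1$ the residual freedom is eliminated by fixing one further plurigenus, producing $P_2(X)=5$ or $P_4(X)=14$ together with $\deg_c(\mathcal{S})=\tfrac12$. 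This isolates cases~(2) and~(3)(i)--(iv), and in each the failure of birationality of $\varphi_{6,X}$ is verified by exhibiting that the lifted linear system cannot separate two general points lying on a single fiber.

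The hard part is case~(4): the $(1,2)$-fibered $3$-folds whose slope data do not already force a value appearing in list~(3). To corner these I would invoke the weighted-basket formalism of Subsection~\ref{basket}. By Reid's plurigenus (Riemann--Roch) formula the plurigenera $P_m(X)$ are determined by $K_X^3$, $\chi(\OX)$ and the Reid basket $B$; combining the vanishing-based lower bounds on $P_m$ with the upper bounds coming from $p_g=2$, $K_X^3>0$ and the assumption that $\varphi_{6,X}$ is non-birational yields a finite system of inequalities on $\mathbb{B}(X)$. I expect the genuine obstacle to be precisely this last step---proving that the system admits only finitely many solutions and enumerating them. The singularity contributions in Reid's formula and the geometric constraints coming from the genus-$2$ fibration must be balanced against one another with some care, and ruling out the a priori infinitely many admissible baskets is where the real technical effort concentrates; the resulting finite list is the set $\mathbb{S}_2$.
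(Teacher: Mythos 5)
Your overall architecture matches the paper's: since $p_g=2$ the canonical map is a pencil of surfaces, one separates fibers for free, lifts pluricanonical systems of the fiber via Kawamata's extension theorem to dispose of all fibers except $(1,2)$- and $(2,3)$-surfaces (note the relevant fact is that $|6K_{X'}|\succcurlyeq|3(K_{X'}+F)|$ only restricts to $|3K_F|$, so the exceptions are governed by the \emph{tri}canonical map of $F$, not by $r_2=5$ as you suggest), and one encodes the residual family by weighted baskets. However, there is a genuine gap at the decisive step. You assert that ``the restriction $K_S\equiv(\pi^*K_X)|_S$ together with the numerics of a genus-$2$ fibration pins the admissible slopes to the short list $\deg_c(\mathcal{C})\in\{\tfrac{2}{3},\tfrac{4}{5},1\}$.'' First, the relation is only an inequality $\pi^*(K_X)|_F\geq\tfrac12\sigma^*(K_{F_0})$ (the analogue of \eqref{eq:restriction}), which gives $\xi\geq\tfrac23$ and nothing more; no soft numerical argument forces $\xi$ into a three-element set. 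In the paper this trichotomy is precisely the content of the long case analysis in Propositions \ref{prop:P2}--\ref{prop:P6}, which requires the auxiliary systems $|M_{m_1}-jF|$, the restriction maps $\theta_{m_1,-j},\psi_{m_1,-j}$ of Definition \ref{twomaps}, and the technical Theorems \ref{k2}--\ref{kf}; and even then the conclusion is conditional: either $\xi\in\{\tfrac23,\tfrac45\}$, or $\xi=1$ with $(\pi^*(K_X)|_F)^2=\tfrac12$ \emph{and} $P_2=5$ or $P_4=14$, \emph{or} all the plurigenera $P_2,\dots,P_6$ are bounded above. If the trichotomy really followed from fibration numerics alone, case (4) of the theorem would be vacuous.

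The same gap reappears in your treatment of case (4). The finiteness of $\mathbb{S}_2$ does not follow from ``upper bounds coming from $p_g=2$, $K_X^3>0$ and the assumption that $\varphi_{6,X}$ is non-birational'' as stated: one needs the explicit implications ``$P_2\geq 6$, or $P_3\geq 9$, or $P_4\geq 15$, or $P_5\geq 24$, or $P_6\geq 35$ forces birationality of $\varphi_{6,X}$ (or one of the listed $\xi$-values),'' together with the lower bounds of Lemma \ref{lem:Pm} and the volume estimates of Proposition \ref{prop:esti of volume}, before Reid's formula and the basket formalism can produce a finite enumeration. You correctly locate the difficulty in this step but misidentify its nature: the enumeration of baskets from bounded data is the routine part, whereas establishing the plurigenus upper bounds is where the paper's technical machinery (Section 3 and Subsections 4.3--4.4) is actually consumed. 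As written, your proposal asserts the key quantitative conclusions without a mechanism to prove them, so the proof does not close.
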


\begin{rem} The existence of threefolds described in Theorem \ref{thm:main p_g2}  (4) are provided by the following examples.   Denote by $X_d$ a general weighted hypersurface of degree $d$ in the sense of Fletcher (see \cite{F00}).
\begin{enumerate} 
\item The 3-fold $X_{16}\subset \bP(1,1,2,3,8)$ has $K^3=\frac{1}{3}$, $p_g=2$ and $\varphi_7$ is non-birational;
\item The 3-fold $X_{14}\subset \bP(1,1,2,2,7)$ has $K^3=\frac{1}{2}$, $p_g=2$ and $\varphi_6$ is non-birational.
	\end{enumerate}
	Moreover,
\begin{enumerate} 	
\item We do not know whether any threefold with properties described in Theorem \ref{thm:main p_g2}  (2) might exist, nor if all those encoded by  ${\mathbb S}_2$ exist (most likely not). 
\item A complete list of the 263 elements of the set  ${\mathbb S}_2$  can be found at the following webpage.
	\begin{center}
		\verb|http://www.dima.unige.it/~penegini/publ.html|
	\end{center}   
	\end{enumerate}
\end{rem}

The plan of the paper is the following:

In Section 2, we describe the set up of the work. We recall some key theorems for the study of the pluricanonical maps for 3-folds of general type  and some necessary inequalities in a general frame work. Moreover we introduce the notion of weighted basket. 

Section 3 contains the core technical theorems of the paper,  which will be effectively used to do the classification. These theorems concern 3-folds with $p_g \geq 2$ and canonically fibered by a $(1,2)$-surface class. 

Theorem \ref{thm:main p_g2} is proved in several steps  in Section 4, which is the longest section of the paper. Subsection 4.1 takes care of Theorem \ref{thm:main p_g2} cases (1) and (2).   Theorem \ref{thm:main p_g2} cases (3)(i) and (3)(ii) are proved in Subsection 4.2.  Most of Section 4  (Subsection 4.3 and 4.4) is then devoted to constructing effective numerical constraints on $P_2(X)$, $P_3(X)$, $P_4(X)$, $P_5(X)$ and $P_6(X)$. This is done by  repeatedly applying  the theorems of Section 3 in a rigorous case by case analysis. These constrains on the plurigenera will be used to produce (by computer aided computation) the set  ${\mathbb S}_2$ that proves Theorem \ref{thm:main p_g2} case (4) (see Subsection 4.5).  Finally, in Subsection 4.3, Theorem \ref{thm:main p_g2} cases (3)(iii) and (3)(iv) (See Propositions \ref{prop:P2}, \ref{prop:P4}) are proved. This section provides also more details and insights on the computations done in \cite{CHP}, where the tedious calculations are omitted (see Proposition X \cite{CHP}). 

\medskip

\textbf{Notation and conventions.} 
We work over the field $\bC$ of complex numbers.  A minimal threefold of general type $X$ is a $\bQ$-factorial 3-fold  with at worst terminal singularities such that the canonical divisor $K_X$ is a nef and big $\bQ$-Cartier divisor.  Moreover, let $\omega_X = \mathcal{O}_X(K_X)$ be the canonical sheaf. Throughout the paper we use the following symbols. 
\begin{itemize}
\item[$\diamond$] ``$\sim$'' denotes linear equivalence or ${\mathbb Q}$-linear equivalence when specified ``$\sim_{\bQ}$''; 
\item[$\diamond$] ``$\equiv$'' denotes numerical equivalence; 
\item[$\diamond$] ``$|M_1|\lsgeq |M_2|$'' (or, equivalently,  
``$|M_2| \lsleq |M_1|$'') means, for linear systems $|M_1|$ and $|M_2|$ on a variety,  
$$|M_1|\supseteq|M_2|+\text{(fixed effective divisor)}.$$
\end{itemize}

\section{\bf Preliminaries}

\subsection{Set up}\label{setup}

Let $X$ be a minimal projective 3-fold of general type and we assume that  $p_g(X)=h^0(X, \OX(K_X))\geq 2$. So we may consider the canonical map $\varphi_1: X\dashrightarrow \bP^{p_g(X)-1}$, which is a non-constant rational map.

{}From the very beginning we fix an effective Weil divisor $K_1\sim K_X$. Take successive blow-ups $\pi: X'\rightarrow X$, which exists by Hironaka's big theorem, such that:
\smallskip

(i) $X'$ is nonsingular and projective;

(ii) the moving part of $|K_{X'}|$ is base point free;

(iii) the union of supports of both $\pi^*(K_1)$ and exceptional divisors of $\pi$ is simple normal crossing.
\smallskip

Denote by $\tilde{g}$ the composition $\varphi_{1}\circ\pi$. So $\tilde{g}:
X'\rightarrow \Sigma\subseteq{\mathbb P}^{p_g(X)-1}$ is a non-constant morphism by the above assumption.
Let $X'\overset{f}\rightarrow \Gamma\overset{s}\rightarrow \Sigma$ be
the Stein factorization of $\tilde{g}$. We get the following commutative diagram:
\medskip

\begin{picture}(50,80) \put(100,0){$X$} \put(100,60){$X'$}
\put(170,0){$\Sigma$} \put(170,60){$\Gamma$} \put(115,65){\vector(1,0){53}}
\put(106,55){\vector(0,-1){41}} \put(175,55){\vector(0,-1){43}}
\put(114,58){\vector(1,-1){49}} \multiput(112,2.6)(5,0){11}{-}
\put(162,5){\vector(1,0){4}} \put(133,70){$f$} \put(180,30){$s$}
\put(92,30){$\pi$} \put(132,-6){$\varphi_1$}\put(136,40){$\tilde{g}$}
\end{picture}
\bigskip

\noindent We may write $K_{X'}=\pi^*(K_X)+E_{\pi}$, where $E_{\pi}$ is an effective ${\bQ}$-divisor which is a sum of distinct exceptional divisors with positive rational coefficients.
By definition, for any positive integer $m$, we have
$\roundup{m\pi^*(K_X)}\leq mK_{X'}$. Set $|M|=\text{Mov}|K_{X'}|$.
Since
$$h^0(X', M)=h^0(\OX(K_{X'}))$$
and $X$ has at worst terminal singularities,
we may also write $$\pi^*(K_X)\sim_{\mathbb Q}
M+E',$$ where $E'$ is another effective ${\mathbb Q}$-divisor. Set
$$d_1=\dim\overline{\varphi_1(X)}=\dim(\Gamma).$$ Clearly one has $1\leq d_1\leq 3$.

If $d_1=2$, a general fiber of $f$ is a smooth
projective curve of genus $\geq 2$. We say that $X$ is {\it canonically fibred by curves}.

If $d_1=1$, a general fiber $F$ of $f$ is a smooth
projective surface of general type. We say that $X$ is {\it
canonically fibred by surfaces} with invariants $(c_1^2(F_0), p_g(F_0)),$ where $F_0$ is the minimal model of $F$ via the contraction morphism $\sigma: F\rightarrow F_0$. We may write $M\equiv a F$ where $a=\deg f_*\OO_{X'}(M)$. 

Just to fix the notion, {\it a generic irreducible element $S$ of} $|M|$ means either a general member of $|M|$ in the case of $d_1\geq 2$ or, otherwise, a general fiber $F$ of $f$.

For any positive integer $m$, $|M_m|$ denotes the moving part of $|mK_{X'}|$.
Let $S_m$ be a general member of $|M_m|$ whenever $m>1$.

Set
$$\zeta=\begin{cases}
1, &\text{if}\ d_1\geq 2;\\
a, & \text{if}\ d_1=1.
\end{cases}$$
Naturally one has $\pi^*(K_X)\simQ \zeta S+E'.$ In practice we need such a real number
$\mu=\mu(S)$ which is defined to be the supremum of those rational numbers $\mu'$ satisfying the following property:
\begin{equation} \pi^*(K_X)\simQ \mu' S+E_{S}'\label{muS}\end{equation}
for certain effective $\bQ$-divisor $E_{S}'$. Clearly we have $\mu(S)\geq \zeta$.

\subsection{Convention} For any linear system $|D|$ of positive dimension on a normal projective variety $Z$, we may write
$$|D|=\text{Mov}|D|+\text{Fix}|D|$$ and consider the rational map $\Phi_{|D|}=\Phi_{\text{Mov}|D|}$.  We say that {\it $|D|$ is not composed of a pencil if $\dim \overline{\Phi_{|D|}(Z)}\geq 2$}. A {\it generic irreducible element} of $|D|$ means a general member of $\text{Mov}|D|$ when $|D|$ is not composed of a pencil or, otherwise, an irreducible component in a general member of $\text{Mov}|D|$.
For a nonsingular projective surface $S$ of general type, we say that {\it $S$ is a $(u,v)$
- surface} if $K_{S_0}^2=u$ and $p_g(S_0)=v$ where $S_0$ is the minimal model of $S$.

\subsection{Known inequalities}\label{ss2} Pick a generic irreducible element $S$ of $|M|$. Clearly, $S$ is a nonsingular projective surface of general type. Assume that $|G|$ is a base point free linear system on $S$. Denote by $C$ a generic irreducible element of $|G|$.
Since $\pi^*(K_X)|_S$ is nef and big, there is a rational number $\beta>0$ such that
$\pi^*(K_X)|_S\geq \beta C$. Granted the existence of such $\beta$, we may assume from now on that $\beta=\beta(|G|)$ is the supremum satisfying the above property.

For any integer $m>0$, we define
\begin{eqnarray*}
\xi=\xi(|G|)&=&(\pi^*(K_X)\cdot C)_{X'},\\
\alpha(m)=\alpha_{(|G|)}(m)&=& (m-1-\frac{1}{\mu}-\frac{1}{\beta})\xi,\\
\alpha_0(m)&=&\roundup{\alpha(m)}.
\end{eqnarray*}
When no confusion arises as it is likely in the context, we will simply use the simple notation $\zeta$, $\mu$, $\beta$, $\xi$ and $\alpha(m)$.
According to \cite[Theorem 2.11]{EXP2}, whenever $\alpha(m)>1$, one has
\begin{equation} m\xi\geq \deg(K_C)+\alpha_0(m).\label{kieq1}
\end{equation}
In particular, as $m$ is sufficiently large so that $\alpha(m)>1$, Inequality \eqref{kieq1} implies
\begin{equation}
\xi\geq \frac{\deg(K_C)}{1+\frac{1}{\mu}+\frac{1}{\beta}}. \label{kieq2}
\end{equation}

Moreover, by \cite[Inequality (2.1)]{MA} one has
\begin{equation}\label{kcube}
K^3_X \geq \mu \beta\xi.
\end{equation}

\subsection{Birationality principle} We refer to \cite[2.7]{EXP2} for birationality principle.  Recall the following concept for point separations.

\begin{defn} Let $|L|$ be a moving (without fixed part) linear system on a normal projective variety $Z$. We say that the rational map $\Phi_{|L|}$ {\it distinguishes sub-varieties $W_1, W_2\subset Z$} if, set theoretically, $$\overline{\Phi_{|L|}(W_1)}\nsubseteqq \overline{\Phi_{|L|}(W_2)}\ \text{and}\ \overline{\Phi_{|L|}(W_2)}\nsubseteqq \overline{\Phi_{|L|}(W_1)}.$$ We say that $\Phi_{|L|}$ {\it separates points $P, Q\in Z$} (for $P, Q\not\in \text{Bs}|L|$), if $\Phi_{|L|}(P)\neq \Phi_{|L|}(Q)$.
\end{defn}

We will tacitly and frequently use the following theorem in the context:

\begin{thm}\label{key-birat} (see \cite[Theorem 2.11]{EXP2}) Keep the same setting and assumption as in Subsection \ref{setup} and Subsection \ref{ss2}.  Pick up a generic irreducible element $S$ of $|M|$. For $m>0$, assume that the following conditions are satisfied:
\begin{itemize}
\item[(i)] $|mK_{X'}|$ distinguishes different generic irreducible elements of $|M|$;
\item[(ii)] $|mK_{X'}||_S$ distinguishes different generic irreducible elements of $|G|$;
\item[(iii)] $\alpha(m)>2$.
\end{itemize}
Then $\varphi_{m,X}$ is birational onto its image.
\end{thm}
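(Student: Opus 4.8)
The plan is to prove that $\Phi_{|mK_{X'}|}$ is birational onto its image, which is equivalent to the birationality of $\varphi_{m,X}$ since $\pi$ is birational; as the source is irreducible, it suffices to separate two \emph{general} points $P_1,P_2\in X'$. I would run the standard three‑step restriction argument along the tower $C\subset S\subset X'$, where $S$ is a generic irreducible element of $|M|$ and $C$ a generic irreducible element of $|G|$ on $S$. Throughout I would exploit the numerical data of Subsections \ref{setup} and \ref{ss2}: from \eqref{muS} one has $\pi^*(K_X)\simQ \mu S+E_S'$ on $X'$, hence $-S\geq -\tfrac1\mu\pi^*(K_X)$ modulo an effective $\bQ$-divisor, while on $S$ the definition of $\beta$ gives $\pi^*(K_X)|_S\geq \beta C$, i.e. $-C\geq -\tfrac1\beta\pi^*(K_X)|_S$ modulo effective. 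Note that (iii) forces $m-1-\tfrac1\mu-\tfrac1\beta>0$, which will underpin every positivity claim below.

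First I would descend from $X'$ to the curve $C$. By hypothesis (i), $\Phi_{|mK_{X'}|}$ sends two distinct generic elements of $|M|$ to subvarieties neither of which contains the other; hence two general points lying on different members are already separated, and I may assume $P_1,P_2$ lie on a single generic $S$. To transport the question onto $S$ I need $H^0(X',mK_{X'})\to H^0(S,(mK_{X'})|_S)$ to be surjective, i.e. $H^1(X',mK_{X'}-S)=0$. Writing $mK_{X'}-S=K_{X'}+\big((m-1)K_{X'}-S\big)$ and using $(m-1)K_{X'}-S\geq (m-1-\tfrac1\mu)\pi^*(K_X)$ modulo effective, with the coefficient positive a fortiori from (iii), Kawamata–Viehweg vanishing applies. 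Granted surjectivity, hypothesis (ii) lets me repeat the argument on $S$, reducing to $P_1,P_2$ on a single generic $C$; the needed vanishing $H^1(S,(mK_{X'})|_S-C)=0$ holds because $(mK_{X'})|_S-C-K_S\geq (m-1-\tfrac1\mu-\tfrac1\beta)\pi^*(K_X)|_S$ modulo effective, a nef and big $\bQ$-divisor, and the restriction $H^0(S,(mK_{X'})|_S)\to H^0(C,(mK_{X'})|_C)$ is again surjective.

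For the final curve step I would carry out the degree bookkeeping via adjunction $K_C=(K_{X'}+S+C)|_C$. From $(mK_{X'})|_C=K_C+\big((m-1)K_{X'}-S-C\big)|_C$ and the bound $\big((m-1)K_{X'}-S-C\big)|_C\geq (m-1-\tfrac1\mu-\tfrac1\beta)\pi^*(K_X)|_C$ modulo effective, whose degree on $C$ is exactly $(m-1-\tfrac1\mu-\tfrac1\beta)\xi=\alpha(m)$, the complete system cut out on $C$ contains $|K_C+D|$ with $D$ effective of degree at least $\alpha_0(m)=\roundup{\alpha(m)}$; this is the same computation underlying inequality \eqref{kieq1}. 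Now (iii), $\alpha(m)>2$, forces $\alpha_0(m)\geq 3$, so $\deg(K_C+D)\geq \deg K_C+3=2g(C)+1$ and $|K_C+D|$ is very ample on $C$, in particular separating $P_1$ and $P_2$. Combining the three steps shows that $\Phi_{|mK_{X'}|}$ separates two general points of $X'$, hence is birational onto its image.

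I expect the real difficulty to lie not in any single vanishing but in the bookkeeping linking the three steps: one must check that the \emph{moving} parts behave well under the two successive restrictions, so that the fixed loci introduced in passing from $X'$ to $S$ and from $S$ to $C$ do not erode the degree estimate on $C$, and that the fractional boundary terms coming from $E_{\pi}$, $E_S'$ and the gap between $\pi^*(K_X)$ and $K_{X'}$ can be absorbed into the round‑ups so that Kawamata–Viehweg applies cleanly at each stage. The simple normal crossing hypotheses imposed on $\pi$ are exactly what make this uniform handling of fractional parts possible; once it is in place, the margin $\alpha(m)>2$ is precisely what upgrades ``base‑point‑free'' to ``separates two general points'' on $C$.
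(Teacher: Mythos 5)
Your proposal is correct and follows the same route as the source: the paper gives no proof of this statement but simply cites \cite[Theorem 2.11]{EXP2}, whose argument is precisely your three-step separation along $C\subset S\subset X'$, with Kawamata--Viehweg vanishing and the bound $\deg(K_C+D)\geq 2g(C)+1$ coming from $\alpha_0(m)\geq 3$. The only point to state more carefully is the one you already flag: one does not prove $H^1(X', mK_{X'}-S)=0$ directly (the effective term $(m-1)E_{\pi}$ obstructs this), but instead applies vanishing to the subsystem $|K_{X'}+\roundup{(m-1)\pi^*(K_X)-\frac{1}{\mu}E_S'}|\preccurlyeq |mK_{X'}|$ and its analogue on $S$, which separates the same points.
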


\subsection{A weak form of extension theorem} Sometimes we use the following theorem which is a special form of Kawamata's extension theorem (see \cite[Theorem A]{KawaE}):

\begin{thm}\label{KaE}  (see \cite[Theorem 2.4]{MZ2016}) Let $Z$ be a nonsingular projective variety on which $D$ is a smooth divisor such that $K_Z+D\simQ A+B$ for an ample $\bQ$-divisor $A$ and an effective $\bQ$-divisor $B$ and that $D$ is not contained in the support of $B$. Then the natural homomorphism
$$H^0(Z, m(K_Z+D))\longrightarrow H^0(D, mK_D)$$
is surjective for all $m>1$.
\end{thm}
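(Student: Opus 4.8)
The plan is to reduce the statement to Kawamata's extension theorem via the adjunction formula and a standard cohomological long exact sequence. First I would record the adjunction identity: since $D$ is a smooth divisor on the nonsingular variety $Z$, one has $(K_Z+D)|_D\simQ K_D$, hence $m(K_Z+D)|_D\simQ mK_D$ for every $m$, so that the map in the statement is genuinely the restriction of pluricanonical-type sections. Twisting the ideal-sheaf sequence $0\to\OO_Z(-D)\to\OO_Z\to\OO_D\to 0$ by $\OO_Z(m(K_Z+D))$ gives
$$0\to\OO_Z\big(m(K_Z+D)-D\big)\to\OO_Z\big(m(K_Z+D)\big)\to\OO_D(mK_D)\to 0,$$
and taking cohomology shows that surjectivity of the restriction map follows once we prove $H^1\big(Z,\OO_Z(m(K_Z+D)-D)\big)=0$.

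Next I would rewrite the relevant line bundle in adjoint form. Since $m(K_Z+D)-D=K_Z+(m-1)(K_Z+D)$ and $(m-1)(K_Z+D)\simQ (m-1)A+(m-1)B$ with $(m-1)A$ ample (here $m>1$ is used) and $(m-1)B$ effective, the target cohomology group is $H^1\big(Z,K_Z+(m-1)(K_Z+D)\big)$. If the effective part $(m-1)B$ had coefficients $<1$ and simple normal crossing support, this would vanish immediately by Kawamata--Viehweg vanishing, since $(m-1)(K_Z+D)$ would then be numerically an ample divisor plus a klt boundary.

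The main obstacle is precisely that $B$ is an arbitrary effective $\bQ$-divisor, so its coefficients may be $\geq 1$ and the pair $(Z,(m-1)B)$ need not be klt; naive Kawamata--Viehweg vanishing does not apply. This is exactly the point where the full strength of the extension theorem is needed. To handle it I would use the ampleness of $A$ together with Nadel vanishing for the multiplier ideal $\mathcal{J}\big((m-1)B\big)$: because $(m-1)(K_Z+D)-(m-1)B\simQ (m-1)A$ is ample, Nadel vanishing gives $H^i\big(Z,\OO_Z(K_Z+(m-1)(K_Z+D))\otimes\mathcal{J}((m-1)B)\big)=0$ for $i>0$. One then transfers sections to and from $D$ by the restriction theorem for multiplier ideals, the hypothesis $D\not\subset\operatorname{Supp}(B)$ guaranteeing that $D$ is not contained in the non-klt locus so that the comparison on $D$ is the honest canonical sheaf. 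Assembling these, the connecting map in the long exact sequence is killed and the restriction map is surjective.

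Since the statement is recorded as a special case of Kawamata's Theorem A in \cite{KawaE}, the most economical route is simply to verify that the hypotheses ($A$ ample, $B$ effective, $D\not\subset\operatorname{Supp}(B)$) match those of \cite[Theorem A]{KawaE} and to invoke its conclusion after the adjunction identification above; the multiplier-ideal argument sketched is essentially the content packaged inside that theorem, and I would cite it rather than reprove it in a preliminaries section.
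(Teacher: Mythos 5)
The paper itself offers no proof of this statement: it is quoted verbatim from \cite[Theorem 2.4]{MZ2016}, which in turn is a special form of \cite[Theorem A]{KawaE}. Your final move --- check that the hypotheses ($A$ ample, $B$ effective, $D\not\subset\operatorname{Supp}(B)$) match Kawamata's Theorem A after the adjunction identification $(K_Z+D)|_D\simQ K_D$, and then cite it --- is therefore exactly what the paper does, and is the right call for a preliminaries section.

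I would, however, caution you that the multiplier-ideal sketch you offer as ``essentially the content packaged inside that theorem'' is not a proof of it, and the gap is precisely at the step you wave through. Nadel vanishing gives $H^1\bigl(Z,\OO_Z(K_Z+(m-1)(K_Z+D))\otimes\mathcal{J}((m-1)B)\bigr)=0$, but feeding this into the ideal-sheaf sequence (or the adjoint-ideal sequence) only yields surjectivity onto the subspace $H^0\bigl(D,\OO_D(mK_D)\otimes\mathcal{J}(D,(m-1)B|_D)\bigr)$ of $H^0(D,mK_D)$: you can only extend sections that already vanish along the multiplier ideal of $(m-1)B|_D$, and the restriction theorem for multiplier ideals ($\mathcal{J}(D,B|_D)\subseteq\mathcal{J}(Z,B)\cdot\OO_D$) points in the wrong direction to remove this constraint. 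Since the coefficients of $(m-1)B$ grow with $m$, this loss is not negligible. Kawamata's actual argument overcomes it by an induction on $m$, using the sections already extended at lower levels to modify the boundary divisor before applying vanishing; that inductive mechanism, not a single application of Nadel vanishing, is the real content of Theorem A. So: cite the theorem as the paper does, but do not present the Nadel-vanishing sketch as a summary of its proof.
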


In particular, when $Z$ is of general type and $D$ moves in a base point free linear system, the condition of Theorem \ref{KaE} is automatically satisfied.  Taking  $Z=X'$, $D=S$ and modulo a process of taking the limit (so may assuming $\mu$ to be rational), it holds that
$$|n(\mu+1)K_{X'}||_S\lsgeq |n\mu(K_{X'}+S)||_S=|n\mu K_S|$$
for some sufficiently large and divisible integer $n$. Noting that
$$n(\mu+1)\pi^*(K_X)\geq M_{n(\mu+1)}$$
and that $|n(\mu+1)\sigma^*(K_{S_0})|$ is base point free, we have
\begin{equation}
\pi^*(K_X)|_S\geq \frac{\mu}{\mu+1}\sigma^*(K_{S_0})\geq \frac{\zeta}{1+\zeta}\sigma^*(K_{S_0}).\label{cri}
\end{equation}

\subsection{The weighted basket of $X$}\label{basket}

The {\it weighted basket} ($=$ formal basket) $\mathbb{B}(X)$ is defined to be the triple
$\{B_X, P_2(X), \chi(\OO_X)\}$.
We keep all the definitions and symbols in \cite[Sections 2 and 3]{EXP1} such as ``basket'', ``prime packing'',  ``the canonical sequence of a basket'', $\Delta^j(B)$ ($j>0$),  $\sigma$, $\sigma'$, $B^{(n)}$ ($n\geq 0$), $\chi_m(\mathbb{B}(X))$ ($m\geq 2$), $K^3(\mathbb{B}(X))$,  $\sigma_5$, $\varepsilon$, $\varepsilon_n$ ($n\geq 5$) and so on.

As $X$ is of general type, the vanishing theorem and Reid's Riemann-Roch formula \cite{Reid87} (see also front lines in \cite[4.5]{EXP1}) imply that
$$\chi_m(\mathbb{B}(X))=P_m(X)$$
for all $m\geq 2$ and $K^3(\mathbb{B}(X))=K_X^3$.  For any $n\geq 0$, $B^{(n)}$ can be expressed in terms of $\chi(\OO_X)$, $P_2$, $P_3$, $\cdots$, $P_{n+1}$ (see \cite[(3.3)$\sim$(3.14)]{EXP1} for more details), which serves as a considerably powerful tool for our classification.

\section{\bf Some technical theorems}

\subsection{Two restriction maps on canonical class of $(1,2)$-surfaces}\label{3.2}
Within this subsection, we always work under the following assumption:

{\bf ($\pounds$)} {\em Keep the setting in \ref{setup}. Let $m_1>1$ be an integer. Assume that $|M_{m_1}|$ is base point free,  $d_1=1$, $\Gamma\cong \bP^1$ and that $F$ is a $(1,2)$-surface. Take $|G|=\Mov|K_F|$, which is assumed to be base point free. Let $C$ be a generic irreducible element of $|G|$. } 

\begin{defn}\label{twomaps} For any integers $j\geq 0$, define the following restriction maps:
	$$H^0(X', M_{m_1}-jF)\overset{\theta_{m_1,-j}}\lrw H^0(F, M_{m_1}|_F),$$
	$$H^0(F, M_{m_1}|_F-jC)\overset{\psi_{m_1,-j}}\lrw H^0(C, M_{m_1}|_C).$$
	Set $U_{m_1,-j}=\text{Im}(\theta_{m_1,-j})$, $V_{m_1,-j}=\text{Im}(\psi_{m_1, -j})$,
	$u_{m_1, -j}=\dim U_{m_1,-j}$ and $v_{m_1, -j}=\dim V_{m_1, -j}$.
\end{defn}

In this section we prove three technical theorems that relate the numbers $\beta, \xi, \mu$ and $\alpha$ to the linear systems of the definition above. These three theorems will be used systematically in Section 4 together with  \cite[Proposition 3.4, 3.5, 3.6, 3.7]{CHP} while using the setting $m_0=1$.

\begin{thm}\label{k2} Let $X$ be a minimal projective $3$-fold of general type with $p_g(X)\geq 2$. Keep Assumption ($\pounds$).  Let $m_1>1$ be an integer.
	Suppose that $|S_1|$ is a base point free linear system on $X'$ with $h^0(S_1|_F)\geq 2$ and that, for some integer $j\geq 2$,
	$$M_{m_1}\geq jF+S_1.$$
	Denote by $C_1$ the generic irreducible element of $|S_1|_F|$. Assume that  $|S_1|_F|$ and $|G|$ are not composed of the same pencil.  Set $\tilde{\delta}=(C_1\cdot C)$. Then
	\begin{itemize}
		\item[(i)] when $\tilde{\delta}\le 2j$,
		$$\varphi_{\rounddown{\frac{1}{\xi(|G|)}(2-\frac{\tilde{\delta}}{j})+\frac{m_1}{j}+\frac{1}{\beta}}+2, X}$$  is birational.
		\item[(i)$'$] when $\tilde{\delta}\le j$,
		$$\varphi_{\rounddown{\frac{2}{\xi(|G|)}(1-\frac{\tilde{\delta}}{j})+\frac{2m_1}{j}}+3, X}$$ is birational.
		\item[(ii)] when $\tilde{\delta}>2j$,
		$$\varphi_{\rounddown{\frac{2m_1}{\tilde{\delta}}+\frac{1}{\beta}+\frac{1}{\mu}\cdot(1-\frac{2j}{\tilde{\delta}})}+2,X}$$ is birational.
		\item[(ii)$'$] when $\tilde{\delta}>2j$ and $S_1|_F$ is big,
		$$\varphi_{\roundup{\frac{m_1}{j}+\frac{1}{\beta}}+1, X}$$ is birational.
		\item[(iii)] one has
		$$\pi^*(K_X)|_F\geq \frac{j}{j+m_1} \sigma^*(K_{F_0})+\frac{1}{j+m_1}S_1|_F.$$
		\item[(iv)] For any integer $n>\frac{m_1}{j}+\frac{1}{\beta}$  with
		$$(n-\frac{m_1}{j}-\frac{1}{\beta})\xi+\frac{\tilde{\delta}}{j}>1, $$
		one has
		$$(n+1)\xi\geq \roundup{(n-\frac{m_1}{j}-\frac{1}{\beta})\xi+\frac{\tilde{\delta}}{j}}+2.$$
	\end{itemize}
\end{thm}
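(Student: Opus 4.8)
The plan is to exploit the assumption $M_{m_1}\geq jF+S_1$ together with the restriction maps of Definition \ref{twomaps} to build up enough sections on the curve class $C$, and then to feed the resulting numerical data into the birationality criterion (Theorem \ref{key-birat}) and the $\alpha$-function inequalities \eqref{kieq1}--\eqref{kcube}. The common engine for (i), (i)$'$, (ii) and (ii)$'$ is the same: I want to estimate how positive $\pi^*(K_X)|_F$ is along $C$, which translates into a lower bound for $\xi(|G|)$ or, dually, into an upper bound for the exponent $m$ at which $\alpha(m)>2$ first holds. The split into cases $\tilde\delta\le 2j$, $\tilde\delta\le j$, and $\tilde\delta>2j$ simply reflects which of two competing bounds on $(\pi^*(K_X)|_F\cdot C)$ is stronger, depending on how the auxiliary curve $C_1=S_1|_F$ meets $C$.

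First I would establish part (iii), the restriction inequality for $\pi^*(K_X)|_F$, since the four birationality statements all rely on it. Because $M_{m_1}\geq jF+S_1$ and $M_{m_1}\leq m_1\pi^*(K_X)$, restricting to $F$ gives $m_1\,\pi^*(K_X)|_F\geq j\,(F|_F)+S_1|_F$ in a suitable $\bQ$-linear sense; combining this with the extension-theorem bound \eqref{cri} applied on $F$ (which controls $F|_F$ in terms of $\sigma^*(K_{F_0})$) and solving the resulting linear combination yields the stated coefficients $\frac{j}{j+m_1}$ and $\frac{1}{j+m_1}$. Here I must be careful that $F$ is a $(1,2)$-surface, so that $|G|=\Mov|K_F|$ is the relevant base-point-free system and the numerical invariants of $F_0$ enter correctly. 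Part (iii) in hand, I would next prove (iv): this is a purely numerical bootstrapping statement, obtained by applying \eqref{kieq1} to the element $C$ with the refined value of $\mu$ (or $\beta$) coming from (iii), the extra term $\frac{\tilde\delta}{j}$ arising precisely from the contribution of $S_1|_F$ to the degree of the restricted linear system along $C$.

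For the birationality parts I would verify the three hypotheses of Theorem \ref{key-birat} for the claimed value of $m$. Condition (i), distinguishing generic irreducible elements of $|M|$, is automatic here because $d_1=1$ and $\Gamma\cong\bP^1$, so the fibres $F$ are separated by $|mK_{X'}|$ once $m\pi^*(K_X)-F$ is effective and moving — which the hypothesis $M_{m_1}\geq jF+S_1$ guarantees for $m$ past a threshold scaling like $\frac{m_1}{j}$. Condition (ii), separating generic irreducible elements of $|G|$ on $F$, is where the assumption that $|S_1|_F|$ and $|G|$ are \emph{not} composed of the same pencil is used: it lets the auxiliary system $|S_1|_F|$ distinguish the $C$'s that $|G|$ alone might fail to separate. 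Condition (iii), $\alpha(m)>2$, is then reduced to the inequality $(m-1-\tfrac1\mu-\tfrac1\beta)\xi>2$, into which I substitute the improved lower bound for $\xi$ (equivalently the $\beta$- and $\mu$-values from (iii)); the four different floor/ceiling expressions in (i), (i)$'$, (ii), (ii)$'$ are exactly the four ways of solving this for $m$ under the respective size regimes of $\tilde\delta$, with (ii)$'$ gaining one unit because bigness of $S_1|_F$ upgrades the intersection estimate.

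The main obstacle I anticipate is the bookkeeping in condition (ii) of Theorem \ref{key-birat} when $\tilde\delta>2j$: the intersection number $(\pi^*(K_X)|_F\cdot C)=\xi$ must be bounded below sharply enough that the floor in (ii) does not lose a unit, and this requires balancing the contribution of $j$ fibres against $S_1|_F$ while keeping track of whether $S_1|_F$ is big. I expect the case $\tilde\delta>2j$ to be genuinely tighter than $\tilde\delta\le 2j$, since then $C_1$ and $C$ meet heavily and the naive bound from (iii) is wasteful; recovering the sharp exponent will likely require re-running \eqref{kieq1} with $C_1$ playing an auxiliary role, rather than relying on (iii) alone. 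I would therefore treat (ii) and (ii)$'$ last, after (i) and (i)$'$ have fixed the method in the easier regime.
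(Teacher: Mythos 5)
Your overall picture --- peel off $jF+S_1$ from $M_{m_1}$, restrict to $F$ and then to $C$, and win because $S_1|_F$ contributes $\tilde{\delta}$ to the degree on $C$ --- is the right one, but the concrete mechanism you propose for the birationality parts does not produce the stated exponents. You plan to ``verify the three hypotheses of Theorem \ref{key-birat}'' and in particular to reduce everything to $\alpha(m)=(m-1-\frac{1}{\mu}-\frac{1}{\beta})\xi>2$ after substituting improved lower bounds for $\xi$, $\beta$, $\mu$. No substitution of constants into that inequality can yield the threshold in (i), which amounts to $(n-\frac{m_1}{j}-\frac{1}{\beta})\xi+\frac{\tilde{\delta}}{j}>2$: the additive gain $\frac{\tilde{\delta}}{j}$ and the replacement of $1+\frac{1}{\mu}$ by $\frac{m_1}{j}$ are not improvements of $\xi$, $\beta$ or $\mu$. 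The paper instead runs a \emph{modified} Kawamata--Viehweg argument: writing $m_1\pi^*(K_X)\equiv jF+S_1+E_{m_1}$, it subtracts $\frac{1}{j}E_{m_1}$ (rather than $\frac{1}{\mu}E'$) so that $n\pi^*(K_X)-\frac{1}{j}E_{m_1}-F\equiv(n-\frac{m_1}{j})\pi^*(K_X)+\frac{1}{j}S_1$ is nef and big, and the leftover $\frac{1}{j}S_1|_F$ restricted to $C$ is exactly what contributes $\frac{\tilde{\delta}}{j}$ to $\deg(\tilde{D}_1)$; for (ii) the peeling is done with weights $\frac{2}{\tilde{\delta}}E_{m_1}+\frac{1}{\mu}(1-\frac{2j}{\tilde{\delta}})E'_F$ instead. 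Without this tailored vanishing step your argument only recovers the generic bound of Theorem \ref{key-birat}, which is strictly weaker than (i), (i)$'$, (ii), (ii)$'$ and (iv).

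Your derivation of (iii) also fails. Since $F$ is a fibre of $f\colon X'\to\bP^1$, the restriction $F|_F$ is numerically trivial, so ``$m_1\pi^*(K_X)|_F\geq j(F|_F)+S_1|_F$'' gives only $m_1\pi^*(K_X)|_F\geq S_1|_F$, and combining that with \eqref{cri} (which bounds $\pi^*(K_X)|_F$ itself, not $F|_F$, by $\frac{\mu}{\mu+1}\sigma^*(K_{F_0})$) yields at best a convex combination with coefficient $\frac{j}{2(j+m_1)}$ on $\sigma^*(K_{F_0})$ --- half of what is claimed. The paper gets the full coefficient $\frac{j}{j+m_1}$ by applying Kawamata's extension theorem to $(2j-1)(K_{X'}+F)$, obtaining $|(2j+2m_1)K_{X'}||_F\lsgeq|2j\sigma^*(K_{F_0})+2S_1|_F|$; this extra input is essential and is missing from your plan. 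Finally, note that in the paper (iii) is used only for (i)$'$; the other parts work directly with the decompositions $m_1\pi^*(K_X)\equiv jF+S_1+E_{m_1}$ and $\frac{1}{\beta}\pi^*(K_X)|_F\equiv C+H_1$, so ordering the proof around (iii) as you propose would not by itself repair the argument.
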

\begin{proof}  Let $|G_1|=|S_1|_F|$.  By assumption, $|G_1|$ is also base point free.
	Set
	$$n=\begin{cases}\rounddown{\frac{1}{\xi(|G|)}(2-\frac{\tilde{\delta}}{j})+\frac{m_1}{j}+\frac{1}{\beta}}+1, & \text{when}\  \tilde{\delta}\leq 2j;\\
	
	\rounddown{\frac{2m_1}{\tilde{\delta}}+\frac{1}{\beta}+\frac{1}{\mu}\cdot(1-\frac{2j}{\tilde{\delta}})}+1, & \text{when}\   \tilde{\delta}> 2j.\end{cases}$$
	Write
	$$m_1\pi^*(K_X)\equiv jF+S_1+E_{m_1},$$
	where $E_{m_1}$ is an effective $\bQ$-divisor on $X'$.
	
	By  Kawamata-Viehweg vanishing theorem (\cite{KaV, VV}), we have
	\begin{eqnarray}
	|(n+1)K_{X'}||_F&\lsgeq&|K_{X'}+\roundup{n\pi^*(K_X)-\frac{1}{j}E_{m_1}}||_F
	\notag\\
	&\lsgeq& |K_F+\roundup{n\pi^*(K_X)|_F-\frac{1}{j}E_{m_1}|_F}| \label{ma},
	\end{eqnarray}
	since
	$$n\pi^*(K_X)-\frac{1}{j}E_{m_1}-F\equiv (n-\frac{m_1}{j})\pi^*(K_X)+\frac{1}{j}S_1$$
	is simple normal crossing (by our assumption), nef and big.
	
	Since $p_g(X)>0$, one sees that $|(n+1)K_{X'}|$ distinguishes different general $F$ and
	$|(n+1)K_{X'}||_F$ distinguishes different general $C$. What we need to do is to investigate  the behavior of $|(n+1)K_{X'}||_C$.
	
	Recall that we have
	\begin{equation}\label{FF1}\frac{1}{\beta}\pi^*(K_X)|_F\equiv C+H_1,\end{equation}
	where $H_1$ is certain effective $\bQ$-divisor.  The vanishing theorem on $F$ gives
	\begin{eqnarray}
	&&|K_F+\roundup{n\pi^*(K_X)|_F-\frac{1}{j}E_{m_1}|_F-H_1}||_C\notag\\
	&\lsgeq& |K_F+\roundup{n\pi^*(K_X)|_F-\frac{1}{j}E_{m_1}|_F-H_1}||_C\notag\\
	&=&|K_C+\tilde{D}_1|\label{m01},
	\end{eqnarray}
	where $\tilde{D}_1=\roundup{n\pi^*(K_X)|_F-\frac{1}{j}E_{m_1}|_F-H_1}|_C$ with
	$$\deg(\tilde{D}_1)\geq (n-\frac{m_1}{j}-\frac{1}{\beta})\xi+
	\frac{\tilde{\delta}}{j}>2.$$
	Thus $\varphi_{n+1, X}$ is birational, which implies Item (i).
	
	For Item (ii)', even if $n=\frac{m_1}{j}+\frac{1}{\beta}$ is integral,
	the $\bQ$-divisor
	$$n\pi^*(K_X)|_F-\frac{1}{j}E_{m_1}|_F-H_1-C$$
	is nef and big since $S_1|_F$ is nef and big, we still have  $\deg(\tilde{D}_1)>2$.
	Hence $\varphi_{\roundup{\frac{m_1}{j}+\frac{1}{\beta}}+1, X}$ is birational.
	
	A direct application of the above argument implies that, whenever $\deg(\tilde{D}_1)>1$,
	$|K_C+\tilde{D}_1|$ is base point free and so
	$$(n+1)\xi\geq \deg(\tilde{D}_{m_0})+2,$$
	which proves Item (iv).
	
	{}Finally, modulo a further birational modification, we may and do assume that the linear system $$|M_{2j-1}'|=\Mov|(2j-1)(K_{X'}+F)|$$ is base point free. It is clear that $M_{2j-1}'$ is big. By the vanishing theorem and Theorem \ref{KaE}, we have
	\begin{eqnarray*}
		|(2j+2m_1)K_{X'}||_F&\lsgeq& |K_{X'}+M_{2j-1}'+2S_1+F||_F\\
		&\lsgeq&|K_F+(2j-1)\sigma^*(K_{F_0})+2S_1|_F|\\
		&\lsgeq&|2j\sigma^*(K_{F_0})+2S_1|_F|,
	\end{eqnarray*}
	which directly implies the statement in Item (iii).

	Statement $(i)'$ follows from the similar argument to that for (i).  Instead of using the relation
	\eqref{FF1}, one may use the statement (iii), namely:
	$$\frac{j+m_1}{j}\pi^*(K_X)|_F\equiv C+\frac{1}{j}S_1|_F+H''$$
	for an effective $\bQ$-divisor $H''$ on $F$.
	In fact, it suffices to take
	$$n=\rounddown{\frac{2}{\xi(|G|)}(1-\frac{\tilde{\delta}}{j})+\frac{2m_1}{j}}+2$$ 
	to obtain (i).

	We are left to treat Item (ii). By  Kawamata-Viehweg vanishing theorem, we have
	\begin{align}\label{ma1}
	|(n+1)K_{X'}||_F &\lsgeq |K_{X'}+\roundup{n\pi^*(K_X)-\frac{2}{\tilde{\delta}}E_{m_1}-\frac{1}{\mu}E_F^{'}}||_F \notag \\
	&\lsgeq |K_F+\roundup{n\pi^*(K_X)|_F-\frac{2}{\tilde{\delta}}E_{m_1}|_F-\frac{1}{\mu}\cdot (1-\frac{2j}{\tilde{\delta}})E_F^{'}|_F}|
	\end{align}
	since $$n\pi^*(K_X)-\frac{2}{\tilde{\delta}}E_{m_1}-\frac{1}{\mu}\cdot (1-\frac{2j}{\tilde{\delta}})E_F^{'}\equiv (n-\frac{2m_1}{\tilde{\delta}}-\frac{1}{\mu}\cdot(1-\frac{2j}{\tilde{\delta}}))\pi^*(K_X)+\frac{2}{\tilde{\delta}}S_1$$
	is simple normal crossing (by our assumption), nef and big.
	
	Then the vanishing theorem on $F$ gives
	\begin{align}\label{m022}
	&\ \ \ \ |K_F+\roundup{n\pi^*(K_X)|_F-\frac{2}{\tilde{\delta}}E_{m_1}|_F-\frac{1}{\mu}\cdot (1-\frac{2j}{\tilde{\delta}})E_F^{'}|_F}| \notag \\
	&\lsgeq |K_F+\roundup{n\pi^*(K_X)|_F-\frac{2}{\tilde{\delta}}E_{m_1}|_F-\frac{1}{\mu}\cdot (1-\frac{2j}{\tilde{\delta}})E_F^{'}|_F-H_1}| \notag \\
	&=|K_C+\tilde{D_n}|
	\end{align}
	where $\tilde{D_n}=\roundup{n\pi^*(K_X)|_F-\frac{2}{\tilde{\delta}}E_{m_1}|_F-\frac{1}{\mu}\cdot (1-\frac{2j}{\tilde{\delta}})E_F^{'}|_F-H_1}|_C$ with $\mathrm{deg}\tilde{D_n}>2$. Hence $$\varphi_{\rounddown{\frac{2m_1}{\tilde{\delta}}+\frac{1}{\beta}+\frac{1}{\mu}\cdot(1-\frac{2j}{\tilde{\delta}})}+2,X}$$ is birational.
\end{proof}

\begin{thm}\label{k3} Let $X$ be a minimal projective $3$-fold of general type with $p_g(X)\geq 2$. Keep Assumption ($\pounds$).  Let $m_1$ be a positive integer.   Suppose that $M_{m_1}\geq j_1F+S_1$ for some moving divisor $S_1$ on $X'$, $j_1>0$ is an integer and that
	${S_1}|_F\geq j_2C+C'$ where $C'$ is a moving irreducible curve on $F$ with $C'\not\equiv C$, $j_2>0$ is an integer. Set $\delta_2=(C'\cdot C)$.  The following statements hold:
	\begin{itemize}
		\item[(i)]  if
		$j_1\geq j_2$,  then
		\begin{itemize}
			\item[(i.1)]{{ For any positive integer $n$ satisfying $n>\frac{m_1}{j}+\frac{j_1-j_2}{j_1}\cdot\frac{1}{\beta}$ and $$(n-\frac{m_1}{j_1}-\frac{j_1-j_2}{j_1}\cdot \frac{1}{\beta})\xi+\frac{\delta_2}{j_1}>1,$$ one has
					$$(n+1)\xi\ge \ulcorner(n-\frac{m_1}{j_1}-\frac{j_1-j_2}{j_1}\cdot\frac{1}{\beta})\xi+\frac{\delta_2}{j_1}\urcorner+2.$$}}
			\item[(i.2)] Either $$\varphi_{\rounddown{\frac{1}{\xi(|G|)}(2-\frac{\delta_2}{j_1})+\frac{m_1}{j_1}+\frac{j_1-j_2}{j_1}\cdot \frac{1}{\beta}}+2,X}\ (\text{when}\  \delta_2< 2j_1)$$
			or{{
					$$\varphi_{\rounddown{\frac{2m_1}{\delta_2}+\frac{1}{\mu}\cdot(1-\frac{2j_1}{\delta_2})+\frac{1}{\beta}\cdot(1-\frac{2j_2}{\delta_2})}+2,X}\ (\text{when}\  \delta_2\ge2j_1)$$}}
			is birational.
		\end{itemize}
		
		\item[(ii)] if $j_2\geq j_1$, then
		\begin{itemize}
			\item[(ii.1)]{{ For any positive integer $n>\frac{m_1}{j_2}+\frac{1}{\mu}\cdot(1-\frac{j_1}{j_2})$ and
					$$(n-\frac{m_1}{j_2}-\frac{1}{\mu}\cdot(1-\frac{j_1}{j_2}))\cdot\xi+\frac{\delta_2}{j_2}>1,$$ one has
					$$(n+1)\xi\ge\ulcorner (n-\frac{m_1}{j_2}-\frac{1}{\mu}\cdot(1-\frac{j_1}{j_2}))\cdot\xi+\frac{\delta_2}{j_2}\urcorner+2.$$}}
			\item[(ii.2)] Either
			$$\varphi_{\rounddown{\frac{1}{\xi(|G|)}(2-\frac{\delta_2}{j_2})+\frac{m_1}{j_2}+\frac{1}{\mu}\cdot \frac{j_2-j_1}{j_2}}+2,X}\ (\text{when}\  \delta_2< 2j_2)$$
			or
			{{
					$$\varphi_{\rounddown{\frac{1}{\mu}\cdot(1-\frac{2j_1}{\delta_2})+\frac{1}{\beta}\cdot(1-\frac{2j_2}{\delta_2})}+2,X}\ (\text{when}\   \delta_2\ge 2j_2)$$
					is birational.}}
		\end{itemize}
	\end{itemize}
\end{thm}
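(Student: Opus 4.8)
The plan is to reproduce the two-step restriction method of Theorem \ref{k2}, now exploiting the finer decomposition $M_{m_1}\geq j_1F+S_1$ together with $S_1|_F\geq j_2C+C'$. Writing $m_1\pi^*(K_X)\equiv j_1F+S_1+E_{m_1}$ with $E_{m_1}\geq 0$, and recalling $\pi^*(K_X)\simQ \mu F+E_F'$ and $\frac{1}{\beta}\pi^*(K_X)|_F\equiv C+H_1$ with $E_F',H_1\geq 0$, the whole argument reduces to choosing, case by case, rational coefficients $a,b\geq 0$ so that
$$n\pi^*(K_X)-aE_{m_1}-bE_F'\equiv F+(\text{nef and big }\bQ\text{-divisor}),$$
whence Kawamata--Viehweg vanishing lifts sections to $F$ as $|(n+1)K_{X'}||_F\lsgeq |K_F+\roundup{\cdots}|$; a further subtraction of a $\frac{1}{\beta}$-multiple of $\pi^*(K_X)|_F$, together with the copies of $C$ already contained in the (suitably scaled) restriction $S_1|_F$, isolates one copy of $C$ and lands us on $|K_C+\tilde{D}_n|$ on the genus-$2$ curve $C$. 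The degree of $\tilde{D}_n$ on $C$ is then the residual coefficient of $\pi^*(K_X)|_F$ times $\xi$, plus the contribution $(C'\cdot C)=\delta_2$ of the leftover part of $C'$; parts (i.1)/(ii.1) follow from base-point-freeness of $|K_C+\tilde{D}_n|$ once $\deg\tilde{D}_n>1$ (exactly as in Theorem \ref{k2}(iv)), and the birationality in (i.2)/(ii.2) once $\deg\tilde{D}_n>2$ (as in Theorem \ref{k2}(i)).

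First I would treat case (i), $j_1\geq j_2$. For $\delta_2<2j_1$ take $a=\frac{1}{j_1}$, $b=0$, so that $n\pi^*(K_X)-\frac{1}{j_1}E_{m_1}-F\equiv (n-\frac{m_1}{j_1})\pi^*(K_X)+\frac{1}{j_1}S_1$ is nef and big for $n>\frac{m_1}{j_1}$. On $F$ one has $\frac{1}{j_1}S_1|_F\geq \frac{j_2}{j_1}C+\frac{1}{j_1}C'$ with $\frac{j_2}{j_1}\leq 1$, so the missing $\frac{j_1-j_2}{j_1}$ part of the $C$ needed for the second restriction is supplied by $\frac{j_1-j_2}{j_1}\cdot\frac{1}{\beta}\pi^*(K_X)|_F\equiv \frac{j_1-j_2}{j_1}(C+H_1)$. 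The residual coefficient of $\pi^*(K_X)|_F$ is $n-\frac{m_1}{j_1}-\frac{j_1-j_2}{j_1}\cdot\frac{1}{\beta}$ and the leftover $\frac{1}{j_1}C'$ contributes $\frac{\delta_2}{j_1}$, giving $\deg\tilde{D}_n\geq (n-\frac{m_1}{j_1}-\frac{j_1-j_2}{j_1}\cdot\frac{1}{\beta})\xi+\frac{\delta_2}{j_1}$, which yields (i.1) and the first formula of (i.2). For $\delta_2\geq 2j_1$ take instead $a=\frac{2}{\delta_2}$, $b=\frac{1}{\mu}(1-\frac{2j_1}{\delta_2})$, using $\frac{2j_1}{\delta_2}F+(1-\frac{2j_1}{\delta_2})F=F$ to peel off one fibre; now $\frac{2}{\delta_2}S_1|_F\geq \frac{2j_2}{\delta_2}C+\frac{2}{\delta_2}C'$ with $\frac{2j_2}{\delta_2}\leq 1$, so $1-\frac{2j_2}{\delta_2}$ copies of $C$ are supplied through $\frac{1}{\beta}$, while the leftover $\frac{2}{\delta_2}C'$ contributes $\frac{2}{\delta_2}\delta_2=2$ to the degree. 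Birationality holds once the residual coefficient $n-\frac{2m_1}{\delta_2}-\frac{1}{\mu}(1-\frac{2j_1}{\delta_2})-\frac{1}{\beta}(1-\frac{2j_2}{\delta_2})$ is positive, giving the second formula of (i.2).

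Case (ii), $j_2\geq j_1$, is handled identically, except that the scaling is governed by $j_2$ (resp. $\delta_2$) and the extra copies of $C$ now come for free. For $\delta_2<2j_2$ take $a=\frac{1}{j_2}$, $b=\frac{1}{\mu}(1-\frac{j_1}{j_2})$, again peeling off $F$ via $\frac{j_1}{j_2}F+(1-\frac{j_1}{j_2})F=F$, so that nef-bigness needs $n>\frac{m_1}{j_2}+\frac{1}{\mu}(1-\frac{j_1}{j_2})$; since $\frac{1}{j_2}S_1|_F\geq C+\frac{1}{j_2}C'$ already contains a full $C$, no $\frac{1}{\beta}$-supplement is needed, the residual coefficient of $\pi^*(K_X)|_F$ is $n-\frac{m_1}{j_2}-\frac{1}{\mu}(1-\frac{j_1}{j_2})$, and $\frac{1}{j_2}C'$ contributes $\frac{\delta_2}{j_2}$, giving (ii.1) and the first formula of (ii.2) (note $\frac{1}{\mu}\cdot\frac{j_2-j_1}{j_2}=\frac{1}{\mu}(1-\frac{j_1}{j_2})$). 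For $\delta_2\geq 2j_2$ the computation is literally the $\delta_2\geq 2j_1$ subcase of (i), with $a=\frac{2}{\delta_2}$, $b=\frac{1}{\mu}(1-\frac{2j_1}{\delta_2})$ and a $\frac{1}{\beta}(1-\frac{2j_2}{\delta_2})$-supplement, the leftover $\frac{2}{\delta_2}C'$ again contributing $2$; this yields the claimed index.

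The main obstacle I anticipate is bookkeeping rather than conceptual. One must arrange the successive modifications (as in Subsection \ref{setup}) so that all the $\bQ$-divisors above are simple normal crossing and so that the generic $C$ meets none of the vertical components of the correction divisors, ensuring that the round-ups do not erode the intersection estimates and that both applications of Kawamata--Viehweg vanishing are legitimate. One must also check in each subcase that the coefficients multiplying $F$ reassemble to exactly $1$ (so that precisely one fibre is peeled off) and that every supplementary coefficient, namely $\frac{j_1-j_2}{j_1}$, $1-\frac{2j_2}{\delta_2}$, $1-\frac{j_1}{j_2}$, $1-\frac{2j_1}{\delta_2}$, is non-negative under the running hypotheses ($j_1\geq j_2$ or $j_2\geq j_1$, together with $\delta_2\geq 2j_1$ or $\delta_2\geq 2j_2$). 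Once this is settled, the degree count on the genus-$2$ curve $C$ is immediate and all four conclusions follow from the curve-case birationality criterion already invoked for Theorem \ref{k2}.
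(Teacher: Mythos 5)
Your proposal follows essentially the same route as the paper's proof: the same decompositions $m_1\pi^*(K_X)\equiv j_1F+S_1+\tilde{E}'_{m_1}$ and $S_1|_F\equiv j_2C+C'+\tilde{E}''_{m_1}$, the same two-stage Kawamata--Viehweg restriction to $F$ and then to $C$ with the $H_1$-supplement of the missing fraction of $C$, and the same choice of correction coefficients in each subcase (the paper only writes out case (i) and leaves (ii) to the reader, exactly as you fill it in). One small remark: your computation for (ii.2) with $\delta_2\ge 2j_2$ yields the index $\rounddown{\frac{2m_1}{\delta_2}+\frac{1}{\mu}(1-\frac{2j_1}{\delta_2})+\frac{1}{\beta}(1-\frac{2j_2}{\delta_2})}+2$, i.e.\ including the term $\frac{2m_1}{\delta_2}$ that the printed statement omits --- your version is the one the argument actually supports.
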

\begin{proof} Modulo further birational modification, we may and do assume that $|S_1|$ is also base point free.  Hence $S_1$ is nef.  By assumption we may find two effective $\bQ$-divisor $\tilde{E}'_{m_1}$ and $\tilde{E}''_{m_1}$ such that
	$$m_1\pi^*(K_X)\equiv j_1F+S_1+\tilde{E}'_{m_1},$$
	$$S_1|_F\equiv j_2C+C'+\tilde{E}''_{m_1}.$$
	
	Set
	$$n=\begin{cases}
	\rounddown{\frac{1}{\xi(|G|)}(2-\frac{\delta_2}{j_1})+\frac{m_1}{j_1}+\frac{j_1-j_2}{j_1}\cdot \frac{1}{\beta}}+1, & \text{when}\
	\delta_2\leq 2j_1;\\
	\rounddown{\frac{2m_1}{\delta_2}+\frac{1}{\mu}\cdot(1-\frac{2j_1}{\delta_2})+\frac{1}{\beta}\cdot(1-\frac{2j_2}{\delta_2})}+1,  & \text{when}\  \delta_2>2j_1.
	\end{cases}$$
	By the vanishing theorem, one has
	\begin{eqnarray*}
		|(n+1)K_{X'}||_F& \lsgeq & |K_{X'}+\roundup{n\pi^*(K_X)-\frac{1}{j_1} \tilde{E}'_{m_1}}||_F\\
		&\lsgeq & |K_F+\roundup{n\pi^*(K_X)|_F-\frac{1}{j_1}{\tilde{E}'_{m_1}}|_F}|\\
		&\lsgeq &  |K_F+ \roundup{Q_{1,m_1}}|,\end{eqnarray*}
	where
	\begin{eqnarray*}
		Q_{1,m_1}&=&n\pi^*(K_X)|_F-\frac{1}{j_1}{\tilde{E}'_{m_1}}|_F-
		\frac{1}{j_1}\tilde{E}''_{m_1}\\
		&\equiv& (n-\frac{m_1}{j_1})\pi^*(K_X)|_F+\frac{j_2}{j_1}C+\frac{1}{j_1}C'.
	\end{eqnarray*}
	Recall that we have
	$$\frac{1}{\beta}\pi^*(K_X)|_F\equiv C+H_1,$$
	where $H_1$ is an effective $\bQ$-divisor.
	Hence
	\begin{eqnarray*}
		&&Q_{1,m_1}-\frac{j_1-j_2}{j_1}H_1\\
		&\equiv & \Big(n-\frac{m_1}{j_1}-\frac{j_1-j_2}{j_1}\cdot \frac{1}{\beta}\Big)\pi^*(K_X)|_F+\frac{1}{j_1}C'+C.
	\end{eqnarray*}
	By the vanishing theorem once more, we have
	$$|K_F+\roundup{Q_{1,m_1}-\frac{j_1-j_2}{j_1}H_1}||_C =
	|K_C+D_{1,m_1}|$$
	where
	$$D_{1,m_1}=\roundup{Q_{1,m_1}-\frac{j_1-j_2}{j_1}H_1-C}|_C$$ with
	$$\deg(D_{1,m_1})\geq \Big(n-\frac{m_1}{j_1}-\frac{j_1-j_2}{j_1}\cdot \frac{1}{\beta}\Big)\xi+\frac{\delta_2}{j_1}.$$
	Now, for the similar reason to previous ones, we see that $\varphi_{n+1, X}$ is brational and Statement (i.2) ($\delta_2\le 2j_1$) follows. Statement (i.1) follows similarly.
	
	Now turn to (i.2) where $\delta_2>2j_1$. By Kawamata-Viehweg vanishing theorem, we have
	\begin{align*}
	|(n+1)K_{X'}||_F &\lsgeq |K_{X'}+\roundup{n\pi^*(K_X)-\frac{2}{\delta_2}\tilde{E}_{m_1}^{'}-\frac{1}{\mu}(1-\frac{2j_1}{\delta_2})E_F^{'}}||_F \\
	&\lsgeq |K_F+\roundup{n\pi^*(K_X)|_F-\frac{2}{\delta_2}\tilde{E}_{m_1}^{'}|_F-\frac{1}{\mu}(1-\frac{2j_1}{\delta_2})E_F^{'}|_F}|
	\end{align*}
	since $n\pi^*(K_X)-\frac{2}{\delta_2}\tilde{E}_{m_1}^{'}-\frac{1}{\mu}(1-\frac{2j_1}{\delta_2})E_F^{'}$ is simple normal crossing (by definition), nef and big. By vanishing theorem on $F$, we have
	\begin{align*}
	&|K_F+\roundup{n\pi^*(K_X)|_F-\frac{2}{\delta_2}\tilde{E}_{m_1}^{'}|_F-\frac{1}{\mu}(1-\frac{2j_1}{\delta_2})E_F^{'}|_F}||_C\\
	&\lsgeq |K_F+\roundup{ n\pi^*(K_X)|_F-\frac{2}{\delta_2}\tilde{E}_{m_1}^{'}|_F-\frac{1}{\mu}(1-\frac{2j_1}{\delta_2})E_F^{'}|_F-\frac{2}{\delta_2}\tilde{E}_{m_1}^{''}- (1-\frac{2j_2}{\delta_2})H_1}|_C \\
	& \lsgeq |K_C+\roundup{\tilde{D}_{1,m_1}}|
	\end{align*}
	where $\tilde{D}_{1, m_1}\equiv(n-\frac{2m_1}{\delta_2}-\frac{1}{\mu}(1-\frac{2j_1}{\delta_2})-\frac{1}{\beta}(1-\frac{2j_2}{\delta_2}))\pi^*(K_X)|_C+\frac{2}{\delta_2}C'|_C$ with $\mathrm{deg}(\tilde{D}_{1,m_1})>2$. Thus $\varphi_{n+1, X}$ is birational.
	
	One gets Statement (ii) in the similar way, which we leave to interested readers.
\end{proof}

\begin{rem}
	{}From the proof of Theorem \ref{k3}, one clearly sees that the variant of Theorem \ref{k3} with $C'=0$ is also true.
\end{rem}

\begin{thm}\label{kf} Let $X$ be a minimal projective $3$-fold of general type with $p_g(X)\geq 2$.  Keep Assumption ($\pounds$).  Let $m_1$ be a positive integer.   Suppose that $M_{m_1}\geq j_1F+S_1$ for some moving divisor $S_1$ on $X'$, $j_1>0$ is an integer and that
	${S_1}|_F\geq j_2C$ where $j_2>0$ is an integer. The following statements hold:
	\begin{itemize}
		\item[(i)]  if
		$j_1\geq j_2$,  then
		\begin{itemize}
			\item[(i.1)] For any positive integer $n$ satisfying $n>\frac{m_1}{j}+\frac{j_1-j_2}{j_1}\cdot \frac{1}{\beta}$ and $$(n-\frac{m_1}{j_1}-\frac{j_1-j_2}{j_1}\cdot \frac{1}{\beta})\xi>1,$$ one has
			$$(n+1)\xi\ge \ulcorner(n-\frac{m_1}{j_1}-\frac{j_1-j_2}{j_1}\cdot\frac{1}{\beta})\xi\urcorner+2.$$
			\item[(i.2)] The map $$\varphi_{\rounddown{\frac{2}{\xi(|G|)}+\frac{m_1}{j_1}+\frac{j_1-j_2}{j_1}\cdot \frac{1}{\beta}}+2,X}$$
			is birational.
		\end{itemize}
		\item[(ii)] if $j_2\geq j_1$, then
		\begin{itemize}
			\item[(ii.1)] For any positive integer $n>\frac{m_1}{j_2}+\frac{1}{\mu}\cdot(1-\frac{j_1}{j_2})$ and
			$$(n-\frac{m_1}{j_2}-\frac{1}{\mu}\cdot(1-\frac{j_1}{j_2}))\cdot\xi>1,$$ one has
			$$(n+1)\xi\ge\ulcorner (n-\frac{m_1}{j_2}-\frac{1}{\mu}\cdot(1-\frac{j_1}{j_2}))\cdot\xi\urcorner+2.$$
			\item[(ii.2)] The map
			$$\varphi_{\rounddown{\frac{2}{\xi(|G|)}+\frac{m_1}{j_2}+\frac{1}{\mu}\cdot \frac{j_2-j_1}{j_2}}+2,X}$$
			is birational.
		\end{itemize}
	\end{itemize}
\end{thm}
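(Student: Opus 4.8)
The statement is precisely Theorem \ref{k3} in the degenerate case $C'=0$, hence $\delta_2=0$; as the Remark following Theorem \ref{k3} already observes, its proof specializes directly, and I would organize the argument along the same two cases $j_1\ge j_2$ and $j_2\ge j_1$. Because $\delta_2=0<2j_1$ and $\delta_2=0<2j_2$, only the ``first'' sub-case of each branch of Theorem \ref{k3} survives, which is why no splitting on $\delta_2$ is visible here. As before I would first pass to a further birational model on which $|S_1|$ is base point free, write $m_1\pi^*(K_X)\equiv j_1F+S_1+\tilde{E}'_{m_1}$ and $S_1|_F\equiv j_2C+\tilde{E}''_{m_1}$ with $\tilde{E}'_{m_1},\tilde{E}''_{m_1}$ effective, and keep at hand the numerical facts $F|_F\equiv 0$ and $\tfrac1\mu E_F'\simQ \tfrac1\mu\pi^*(K_X)-F$ issuing from $\pi^*(K_X)\simQ\mu F+E_F'$. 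Recall also that under Assumption ($\pounds$) the curve $C$ has genus $2$, so $\deg K_C=2$.

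For Item (i), where $j_1\ge j_2$, I would extract one fibre on $X'$ by subtracting $\tfrac1{j_1}\tilde{E}'_{m_1}$: Kawamata--Viehweg vanishing gives $|(n+1)K_{X'}||_F\lsgeq |K_F+\roundup{Q}|$ with $Q\equiv(n-\tfrac{m_1}{j_1})\pi^*(K_X)|_F+\tfrac{j_2}{j_1}C$ for $n>\tfrac{m_1}{j_1}$. Using $\tfrac1\beta\pi^*(K_X)|_F\equiv C+H_1$ with $H_1$ effective, and the hypothesis $j_1\ge j_2$ that makes $\tfrac{j_1-j_2}{j_1}\ge 0$, the class $Q-\tfrac{j_1-j_2}{j_1}H_1$ becomes $(n-\tfrac{m_1}{j_1}-\tfrac{j_1-j_2}{j_1}\cdot\tfrac1\beta)\pi^*(K_X)|_F+C$. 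A second application of vanishing, restricting to $C$, yields $|K_C+D|$ with $\deg D\ge(n-\tfrac{m_1}{j_1}-\tfrac{j_1-j_2}{j_1}\cdot\tfrac1\beta)\xi$. Since $D$ is integral and $C$ has genus $2$, the system $|K_C+D|$ is base point free once $\deg D\ge 2$, which holds as soon as the displayed quantity exceeds $1$; together with the standard bound $\deg(M_{n+1}|_C)\le(n+1)\xi$ this gives $(n+1)\xi\ge\deg(K_C+D)=\deg D+2\ge\roundup{(n-\tfrac{m_1}{j_1}-\tfrac{j_1-j_2}{j_1}\cdot\tfrac1\beta)\xi}+2$, i.e. Item (i.1); and $\deg D>2$ forces $\varphi_{n+1,X}$ to be birational, giving (i.2) upon taking $n=\rounddown{\tfrac{2}{\xi(|G|)}+\tfrac{m_1}{j_1}+\tfrac{j_1-j_2}{j_1}\cdot\tfrac1\beta}+1$.

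For Item (ii), where $j_2\ge j_1$, the coefficient $\tfrac{j_1-j_2}{j_1}$ would be negative, so I cannot absorb a copy of $H_1$; instead I would divide by $j_2$ and pay for the missing fibre through the $\mu$-term. Concretely I would subtract $\tfrac1{j_2}\tilde{E}'_{m_1}+\tfrac1\mu\cdot\tfrac{j_2-j_1}{j_2}E_F'$ before restricting. The key computation is that the fibre contributions cancel: from $\tfrac1{j_2}\tilde{E}'_{m_1}\equiv\tfrac{m_1}{j_2}\pi^*(K_X)-\tfrac{j_1}{j_2}F-\tfrac1{j_2}S_1$ and $\tfrac1\mu E_F'\simQ\tfrac1\mu\pi^*(K_X)-F$, the coefficient of $F$ in $n\pi^*(K_X)-\tfrac1{j_2}\tilde{E}'_{m_1}-\tfrac1\mu\tfrac{j_2-j_1}{j_2}E_F'-F$ equals $\tfrac{j_1}{j_2}+\tfrac{j_2-j_1}{j_2}-1=0$, leaving the nef and big class $(n-\tfrac{m_1}{j_2}-\tfrac1\mu\tfrac{j_2-j_1}{j_2})\pi^*(K_X)+\tfrac1{j_2}S_1$ for $n>\tfrac{m_1}{j_2}+\tfrac1\mu\tfrac{j_2-j_1}{j_2}$. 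Restricting to $F$, invoking $\tfrac1{j_2}S_1|_F\ge C$, and restricting once more to $C$ by vanishing, produces $|K_C+D|$ with $\deg D\ge(n-\tfrac{m_1}{j_2}-\tfrac1\mu\tfrac{j_2-j_1}{j_2})\xi$; the same base-point-freeness and degree estimates then deliver (ii.1) and (ii.2).

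The only genuinely delicate point---the one hidden in the phrase ``in the similar way, which we leave to interested readers'' at the end of Theorem \ref{k3}---is the coefficient bookkeeping in Item (ii): one must choose to absorb the excess curve multiplicity by trading fibres against $\tfrac1\mu\pi^*(K_X)$ via $E_F'$ (rather than trading curves against $\tfrac1\beta\pi^*(K_X)$ via $H_1$ as in Item (i)), and then verify that the fibre coefficient cancels exactly to $0$ so that what remains is honestly nef and big. Everything else is the routine two-step vanishing argument combined with the genus-$2$ base-point-freeness of $|K_C+D|$ and the standard inequality $\deg(M_{n+1}|_C)\le(n+1)\xi$.
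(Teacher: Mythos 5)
Your proposal is correct and follows essentially the same route as the paper, which disposes of Theorem \ref{kf} in one line by specializing the proof of Theorem \ref{k3} to $C'=0$ (hence $\delta_2=0$), exactly as you do. You in fact supply more detail than the paper does, notably the fibre-coefficient cancellation via $\tfrac1\mu E_F'$ in case (ii), which the paper leaves to the reader already at the level of Theorem \ref{k3}; your bookkeeping there checks out.
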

\begin{proof} This follows directly from the proof of Theorem \ref{k3} by blinding $C'$. We omit the redundant details.
\end{proof}

\section{\bf Threefolds of general type with $p_g=2$}

This section is devoted to the classification of 3-folds of general type with $p_g(X)=2$. Keep the same notation as in \ref{setup}. We have an induced fibration $f\colon X'\lrw \Gamma$ of which the general fiber $F$ is a nonsingular projective surface of general type. Denote by $\sigma\colon F\rightarrow F_0$ the contraction onto its minimal model.

By \cite[Theorem 3.3]{IJM}, it is sufficient to assume that $b=g(\Gamma)=0$, i.e. $\Gamma\cong \bP^1$. Since $p_g(X)>0$ and $F$ is a general fiber, we have $p_g(F)>0$. By the surface theory, $F$ belongs to one of the $3$ types:
\begin{enumerate}
\item $(K_{F_0}^2, p_g(F_0))=(1,2)$;
\item $(K_{F_0}^2, p_g(F_0))=(2,3)$;
\item other surfaces with $p_g(F_0)>0$.
\end{enumerate}

 Recall that we have
\begin{align}\label{eq:zeta}
 \pi^*(K_X)\sim_{\bQ}F+E_{1}^{'},
\end{align}
where  $E_{1}^{'}$ is an effective $\mathbb{Q}$-divisor since $\mu\geq 1$.
By Theorem \ref{KaE}, the natural restriction map
\begin{align}\label{eq:adjunction}
H^0(X', 3K_{X'}+3F)\rightarrow H^0(F, 3K_F)
\end{align}
is surjective. Since $|6K_{X'}|\lsgeq |3K_{X'}+3F|$,
by \eqref{eq:zeta} and \eqref{eq:adjunction}, we may write
\begin{align}\label{eq:restriction}
\pi^{*}(K_X)|_F\equiv \frac{1}{2}\sigma^*(K_{F_0})+Q^{'}\equiv \frac{1}{2}C+\hat{E}_F,
\end{align}
where $Q^{'}$ and $\hat{E}_F$ are effective $\bQ$-divisors.

\subsection{Non-$(1,2)$-surface case}

\begin{lem}\label{lem:phi6 general case}
Let $X$ be a minimal projective $3$-fold of general type with $p_g(X)=2$ and keep the setting in \ref{setup}. Suppose $d_1=1$, $\Gamma\cong \bP^1$ and $F$ is neither a $(2,3)$ surface nor a $(1,2)$-surface. Then $\varphi_{6,X}$ is birational.
\end{lem}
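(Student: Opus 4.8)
The plan is to prove generic injectivity of $\varphi_{6,X}$ by the usual two-step mechanism: show that $\varphi_{6,X}$ separates two general fibres of $f$, and that it restricts to a birational map on a general fibre $F$. The first point is immediate from $p_g(X)=2>0$: the moving pencil $|M|=|F|$ is base point free and contained, modulo its fixed part, in $|6K_{X'}|$, so any two points lying on distinct general fibres already have distinct images. Everything therefore reduces to showing that the restricted system $|6K_{X'}||_F$ defines a birational map on $F$.

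For the restriction I would use the comparison already prepared in the set-up. Since $\pi^*(K_X)\simQ F+E_1'$ we have $|6K_{X'}|\lsgeq|3K_{X'}+3F|$, while Theorem \ref{KaE} yields the surjectivity \eqref{eq:adjunction} of $H^0(X',3K_{X'}+3F)\to H^0(F,3K_F)$. Combining the two, the image of $|6K_{X'}|$ under restriction to a general $F$ dominates the whole tricanonical system of $F$, i.e.
\[
|6K_{X'}||_F\ \lsgeq\ |3K_F|\ \lsgeq\ |3\sigma^*(K_{F_0})|,
\]
the last comparison because $3K_F=3\sigma^*(K_{F_0})+3E_\sigma$ with $E_\sigma$ an effective $\sigma$-exceptional divisor. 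Hence $\Phi_{|6K_{X'}||_F}$ dominates the map attached to $|3\sigma^*(K_{F_0})|$, which is the tricanonical map $\varphi_{3,F_0}$ precomposed with the birational contraction $\sigma$.

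Now the hypothesis on $F$ does all the work. By Bombieri's classification of the pluricanonical maps of surfaces of general type \cite{Bom}, the tricanonical map $\varphi_{3,F_0}$ is birational onto its image whenever $F_0$ is neither a $(1,2)$-surface nor a $(2,3)$-surface, which is precisely our assumption. Since any linear system dominating a birational one is again birational, $\Phi_{|6K_{X'}||_F}$ is birational on $F$; together with the separation of fibres this gives that $\varphi_{6,X}$ is birational onto its image.

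The only genuinely delicate input is the sharp form of the surface-theoretic statement: it is essential that $(1,2)$ and $(2,3)$ are exactly the surfaces for which $\varphi_3$ fails to be birational, since this is what matches the excluded cases of the lemma. The remaining steps are routine manipulations of the restriction sequences \eqref{eq:zeta}--\eqref{eq:restriction} already in place; I would only be careful, in passing from $|3\sigma^*(K_{F_0})|$ to the larger system $|6K_{X'}||_F$, to note that enlarging a base point free system that is already birational cannot destroy birationality, so no further base-locus analysis on $F$ is needed.
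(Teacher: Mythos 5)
Your proof is correct and follows essentially the same route as the paper's own (very brief) argument: separation of distinct general fibres from $p_g(X)>0$, surjectivity of the restriction $H^0(X',3K_{X'}+3F)\to H^0(F,3K_F)$ from Theorem \ref{KaE} so that $|6K_{X'}||_F\lsgeq|3K_F|$, and Bombieri's classification of the exceptions to the birationality of $\varphi_{3}$ for surfaces of general type. The only difference is that you spell out the steps the paper leaves implicit.
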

\begin{proof} As $|6K_{X'}|$ distinguishes different general fibers of $f$, \eqref{eq:adjunction}
 implies that $\varphi_{6,X}$ is birational unless $F$ is either a  $(1,2)$-surface or a $(2,3)$-surface.
\end{proof}

\begin{prop}\label{prop:(2,3) surface}
Let $X$ be a minimal projective $3$-fold of general type with $p_g(X)=2$, $d_1=1$, $\Gamma\cong \bP^1$. Assume that $F$ is a $(2,3)$-surface. Then $\varphi_{6,X}$ is not birational if and only if $(\pi^*(K_X)|_F)^2=\frac{1}{2}$.
\end{prop}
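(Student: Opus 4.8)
The plan is to establish the equivalence by treating the two implications separately, with the main content lying in the ``if'' direction (showing that $(\pi^*(K_X)|_F)^2 = \frac12$ forces non-birationality of $\varphi_{6,X}$) and a cleaner ``only if'' direction via the birationality criterion of Theorem \ref{key-birat}. Throughout I work with the setup of \ref{setup}, so that $F$ is a generic irreducible element of $|M|$ and $\pi^*(K_X)|_F \equiv \frac12\sigma^*(K_{F_0}) + Q'$ by \eqref{eq:restriction}. For a $(2,3)$-surface one has $K_{F_0}^2 = 2$ and $p_g(F_0)=3$; in particular $|K_{F_0}|$ is base point free (since $K_{F_0}^2 = 2 = 2p_g(F_0)-4$ places the surface at the boundary where $\varphi_{1,F_0}$ is a degree-$2$ morphism onto a quadric), so I would set $|G| = \Mov|K_F|$ and take $C$ a generic irreducible element, a curve with $\deg(K_C) = K_{F_0}\cdot(K_{F_0}+C) $ computable from adjunction on $F_0$.

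For the \emph{only if} direction, I would assume $(\pi^*(K_X)|_F)^2 \neq \frac12$ and deduce birationality. Since $\pi^*(K_X)|_F \geq \frac12\sigma^*(K_{F_0})$ and the self-intersection $\frac14 K_{F_0}^2 = \frac12$ is the extremal value, the inequality $(\pi^*(K_X)|_F)^2 \neq \frac12$ means $Q' \neq 0$ and $\pi^*(K_X)|_F$ strictly exceeds $\frac12\sigma^*(K_{F_0})$, which improves the constant $\beta = \beta(|G|)$ and hence $\xi = \xi(|G|) = (\pi^*(K_X)\cdot C)$ strictly beyond the borderline value. The goal is then to verify the three hypotheses of Theorem \ref{key-birat} for $m=6$: that $|6K_{X'}|$ distinguishes different fibers $F$ (immediate from $p_g(X)=2$), that $|6K_{X'}||_F$ distinguishes different $C$'s (via the extension statement \eqref{eq:adjunction} and the base-point-freeness of $|K_{F_0}|$), and finally that $\alpha(6) = (6 - 1 - \frac1\mu - \frac1\beta)\xi > 2$. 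Feeding $\mu \geq 1$, the improved $\beta$, and the improved $\xi$ into this last inequality is the crux; the borderline case $(\pi^*(K_X)|_F)^2=\tfrac12$ is precisely where $\alpha(6)$ drops to exactly $2$, so a strict improvement in any of these invariants should push $\alpha(6)$ past $2$.

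For the \emph{if} direction, I would argue that when $(\pi^*(K_X)|_F)^2 = \frac12$ the divisor $\pi^*(K_X)|_F$ is numerically $\frac12\sigma^*(K_{F_0})$ (i.e.\ $Q' \equiv 0$), which pins down $\beta = 2$, $\xi = \frac12\deg(K_C)$ at its minimal value, and forces $\alpha(6) = 2$ exactly, so Theorem \ref{key-birat} no longer applies. To turn the \emph{failure} of the criterion into genuine non-birationality I would examine the restriction $\varphi_{6,X}|_C$ directly: the computation \eqref{kieq1}, applied with $m=6$, yields $6\xi \geq \deg(K_C) + \alpha_0(6)$, and in the extremal case this becomes an equality that shows $|6K_{X'}||_C = |K_C + \tilde D|$ with $\deg \tilde D = 2$, i.e.\ the restricted system is cut out by a degree-$2$ divisor that cannot separate the two points of a general fibre of the hyperelliptic-type map $C \to \bP^1$ (recall $C$ on a $(2,3)$-surface is typically hyperelliptic). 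The main obstacle is this last step: one must rule out that some ``extra'' sections of $|6K_{X'}|$ not coming from the adjunction filtration separate these points. I would control this by showing the restriction maps $H^0(X',6K_{X'}) \to H^0(F,6K_F) \to H^0(C, 6K_{X'}|_C)$ are the only source of sections on $C$ (using that $6K_{X'} - F$ and $6K_F - C$ are handled by vanishing, so no sections are lost), thereby reducing birationality on $C$ to the failure already established on the hyperelliptic curve. This identification of the explicit $(2,3)$-surface geometry — matching the numerical extremal case with the hyperelliptic non-separation — is where the real work lies.
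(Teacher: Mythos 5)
Your ``if'' direction is essentially the paper's argument: the Hodge index theorem forces $\pi^*(K_X)|_F\equiv\frac{1}{2}\sigma^*(K_{F_0})$, the restricted system $|M_6||_C$ is then pinned down by the degree count $\deg(M_6|_C)\le (6\pi^*(K_X)\cdot C)=6=\deg(K_C+C|_C)$ together with the vanishing-theorem lower bound $|M_6||_C\lsgeq|K_C+D|$ with $\deg(D)\ge 2$, and the fact that $|C|_C|$ is a $g^1_2$ kills birationality. (Two small slips there: in this situation $\beta=\frac{1}{2}$, not $2$, and the canonical image of a $(2,3)$-surface is $\bP^2$, not a quadric; neither affects the argument.)

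The genuine gap is in your ``only if'' direction. You claim that $(\pi^*(K_X)|_F)^2>\frac{1}{2}$, i.e.\ $Q'\neq 0$, ``improves the constant $\beta$ and hence $\xi$''. The first implication is unjustified and false in general: $\beta$ is the supremum of those $\beta'$ with $\pi^*(K_X)|_F\geq \beta' C$, and a nonzero effective $Q'$ need not dominate any positive multiple of $C$, so $\beta$ can remain exactly $\frac{1}{2}$ while $(\pi^*(K_X)|_F)^2>\frac{1}{2}$. What your strategy actually requires is $\xi>1$; this is true, but the argument you would need is missing, namely: writing $(\pi^*(K_X)|_F)^2=\frac{1}{2}+\frac{1}{2}(Q'\cdot C)+(\pi^*(K_X)|_F\cdot Q')$ and observing that $(Q'\cdot C)=0$ would force $(\pi^*(K_X)|_F\cdot Q')=(Q')^2\le 0$ by the Hodge index theorem (since $C^2=2>0$), one concludes $(Q'\cdot C)>0$, hence $\xi=1+(Q'\cdot C)>1$ and $\alpha(6)\ge 2\xi>2$. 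With that supplied (plus the verification of conditions (i) and (ii) of Theorem \ref{key-birat}, where (ii) deserves a word because $|G|$ is not composed of a pencil here), your route goes through, but note it is genuinely different from the paper's: in this direction the paper never passes through $\xi$ at all, instead restricting to $F$ via Kawamata--Viehweg to get $|6K_{X'}||_F\lsgeq |K_F+\roundup{4\pi^*(K_X)|_F}|$ and invoking the surface-level birationality criteria of \cite[Lemmas 2.3 and 2.5]{CHP}, which consume the hypothesis $(\pi^*(K_X)|_F)^2>\frac{1}{2}$ directly.
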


\begin{proof}
We have $(\pi^*(K_X)|_F)^2\ge\frac{1}{2}$ by (\ref{eq:restriction}) and by our assumption. By (\ref{eq:zeta}) and Kawamata-Viehweg vanishing theorem, we have
\begin{align}\label{eq:(2,3)adjunction}
|6K_{X'}||_F&\lsgeq |K_{X'}+F+\ulcorner4\pi^*(K_X)\urcorner||_F\nonumber \\
&\lsgeq |K_F+\ulcorner4\pi^*(K_X)|_F\urcorner|.
\end{align}
If $(\pi^*(K_X)|_F)^2>\frac{1}{2}$, then $|K_F+\ulcorner4\pi^*(K_X)|_F\urcorner|$ is birational by \cite[Lemma 2.3]{CHP}, \cite[Lemma 2.5]{CHP} and \eqref{eq:restriction}. So $\varphi_{6,X}$ is birational.

If $(\pi^*(K_X)|_F)^2=\frac{1}{2}$,  we have $\pi^*(K_X)|_F\equiv \frac{1}{2}\sigma^*(K_{F_0})$ by Hodge index theorem and \eqref{eq:restriction}.  It is clear that  $|3\sigma^*(K_{F_0})||_C\lsleq |K_C+C|_C|$.
Note that $|C|=|\sigma^*(K_{F_0})|$ is base point free,  we have
\begin{align*}
{|6K_{X'}|}|_C&\lsgeq {|\llcorner6\pi^*(K_X)\lrcorner||_F}|_C\\
&\lsgeq |3\sigma^*(K_{F_0})||_C. \end{align*}
Since $(\pi^*(K_X)\cdot C)=1$, the vanishing theorem and \eqref{eq:(2,3)adjunction} implies that ${|M_6|}|_C=|K_C+D|$ with $\deg(D)\geq 2$.  Hence ${|M_6|}|_C=|K_C+C|_C|$.
Since $|C|_C|$ gives a $g^{1}_{2}$, $|K_C+C|_C|$ is clearly non-birational.

Therefore $\varphi_{6,X}$ is non-birational if $(\pi^*(K_X)|_F)^2=\frac{1}{2}$.
\end{proof}

\subsection{The $(1,2)$-surface case} \

{}From now on, we always assume that $F$ is a $(1,2)$-surface. We have $0\le \chi(\omega_X)\le 1$ by our assumption and \cite[Lemma 4.5]{MCNoe}. It is well known that $|K_{F_0}|$ has exactly one base point and that, after blowing up this point, $F$ admits a canonical fibration of genus $2$ with a unique section which we denote by $H$. Denote by $C$ a general member in $|G|=\Mov|{\sigma^*(K_{F_0})}|$.

\begin{rem}\label{rem:xi=2/3}
 By \cite[Theorem 1.1]{C14}, $\varphi_{7,X}$ is non-birational if and only if $\xi=\frac{2}{3}$. Since $p_g(X)>0$, $\varphi_{6,X}$ is non-birational as well if $\xi=\frac{2}{3}$.
\end{rem}

\begin{lem}\label{lem: distin}
	Let $X$ be a minimal projective $3$-fold of general type with $p_g(X)=2$, $d_1=1$, $\Gamma\cong \bP^1$. Assume that $F$ is a $(1,2)$-surface. Keep the setting in \ref{setup}. Then $|6K_{X'}|$ distinguishes different generic irreducible elements of $|M|$ and $|6K_{X'}||_F$ distinguishes generic irreducible elements of $|G|$. 
\end{lem}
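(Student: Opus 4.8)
The plan is to prove the two assertions separately; in each case I reduce the relevant (restricted) linear system to a sub-system that is visibly a base point free pencil inducing the fibration in question, and then invoke the birationality principle of \cite[2.7]{EXP2}.

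For the first assertion, recall that $|M|=\Mov|K_{X'}|$ satisfies $h^0(X',M)=p_g(X)=2$, so $|M|$ is a pencil composed with $f\colon X'\to\Gamma\cong\bP^1$; hence $\Phi_{|M|}$ agrees with $f$ up to the re-embedding of $\bP^1$, and two distinct general fibers $F$ map to distinct points of $\bP^1$, i.e. $|M|$ distinguishes distinct generic irreducible elements of $|M|$. Writing $K_{X'}=M+Z$ with $Z=\mathrm{Fix}|K_{X'}|\geq 0$, the divisor $6K_{X'}-M=5K_{X'}+Z$ is linearly equivalent to an effective divisor since $p_g(X)>0$ makes $K_{X'}$ effective; thus $|6K_{X'}|\lsgeq|M|$, and the birationality principle transfers the distinguishing property from $|M|$ to $|6K_{X'}|$.

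For the second assertion I work on the generic fiber $F$. The same adjunction chain as in the proof of Proposition \ref{prop:(2,3) surface} applies here: from \eqref{eq:zeta} and the Kawamata--Viehweg vanishing theorem one obtains $|6K_{X'}||_F\lsgeq|K_{X'}+F+\roundup{4\pi^*(K_X)}||_F\lsgeq|K_F+\roundup{4\pi^*(K_X)|_F}|$, which exhibits the required positivity as coming from restricting global sections of $6K_{X'}$ itself. By \eqref{cri} together with $\zeta\geq 1$ we have $\pi^*(K_X)|_F\geq\frac12\sigma^*(K_{F_0})$, whence $4\pi^*(K_X)|_F\geq 2\sigma^*(K_{F_0})$; since $2\sigma^*(K_{F_0})$ is an integral divisor this yields $\roundup{4\pi^*(K_X)|_F}\geq 2\sigma^*(K_{F_0})$. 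Combining with $K_F\sim\sigma^*(K_{F_0})+E_\sigma$, where $E_\sigma\geq 0$ is the $\sigma$-exceptional divisor, I get $|6K_{X'}||_F\lsgeq|3\sigma^*(K_{F_0})|\lsgeq|\sigma^*(K_{F_0})|\lsgeq|G|$. Since $|G|$ is the base point free pencil defining the genus-$2$ fibration of $F$, distinct general members $C$ map to distinct points of its base $\bP^1$, so $|G|$ distinguishes its own generic irreducible elements; the birationality principle then transfers this to $|6K_{X'}||_F$.

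I expect the only genuinely delicate point to be the insistence in the second paragraph that the positivity be produced by restricting global sections of $6K_{X'}$ rather than merely holding abstractly on $F$: this is exactly what the vanishing-theorem step secures, and it is crucial that the round-up bookkeeping respect the integrality of $2\sigma^*(K_{F_0})$ so that the inequality survives after applying $\roundup{\cdot}$. Everything else is a formal manipulation of the relation $\lsgeq$ together with the birationality principle, and the argument uses nothing about the $(1,2)$-hypothesis beyond the existence on $F$ of the base point free genus-$2$ pencil $|G|$.
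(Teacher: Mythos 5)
Your proof is correct, and the first assertion is handled exactly as in the paper: $|M|$ is composed of the rational pencil $|F|$, $|6K_{X'}|\lsgeq |M|$ because $p_g(X)>0$, done. For the second assertion, however, you take a genuinely different route from the paper's own proof. The paper simply invokes the surjectivity of the restriction map \eqref{eq:adjunction}, $H^0(X',3K_{X'}+3F)\rightarrow H^0(F,3K_F)$ (Kawamata's extension theorem), which together with $|6K_{X'}|\lsgeq |3K_{X'}+3F|$ gives at once $|6K_{X'}||_F\lsgeq |3K_F|\lsgeq |G|$, with no vanishing theorem and no round-ups. You instead re-run the Kawamata--Viehweg chain of Proposition \ref{prop:(2,3) surface} to get $|6K_{X'}||_F\lsgeq |K_F+\roundup{4\pi^*(K_X)|_F}|$ and then descend to $2\sigma^*(K_{F_0})$ via \eqref{cri}. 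This works and has the advantage of producing the stronger quantitative output $|6K_{X'}||_F\lsgeq |3\sigma^*(K_{F_0})|$ rather than just separation of the pencil, but it is longer and carries one point you should phrase more carefully: the inequality $4\pi^*(K_X)|_F\geq 2\sigma^*(K_{F_0})$ coming from \eqref{cri} is not a coefficient-wise inequality of divisors but means that the difference is $\bQ$-linearly equivalent to an effective $\bQ$-divisor, so the justification ``$2\sigma^*(K_{F_0})$ is integral, hence the round-up preserves the inequality'' is not literally a statement about round-ups of coefficients. Written as $\roundup{4\pi^*(K_X)|_F}\geq 4\pi^*(K_X)|_F\geq 2\sigma^*(K_{F_0})$ in the paper's sense of ``$\geq$'' the step is the same manipulation the authors perform in Proposition \ref{prop:(2,3) surface}, so it is acceptable by the paper's conventions; but the cleanest version of your argument would simply cite \eqref{eq:adjunction}, which delivers the containment of complete linear systems on the nose and makes the round-up bookkeeping unnecessary.
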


\begin{proof}
	Since $|M|$ is composed of a rational pencil, it is clear that $|6K_{X'}|$ distinguishes different generic irreducible elements of $|M|$. Notice that $|G|$ is composed of a rational pencil. The surjectivity of \eqref{eq:adjunction} implies that $|6K_{X'}||_F$ distinguishes generic irreducible elements of $|G|$.
\end{proof}

\begin{lem}\label{lem:beta} (see \cite[2.15]{MA})
Let $X$ be a minimal projective $3$-fold of general type with $p_g(X)=2$, $d_1=1$, $\Gamma\cong\bP^1$. Assume that  $F$ is a $(1,2)$-surface. Then $ \beta\ge\frac{1}{2}$ and $ \xi\ge\frac{2}{3}.$
\end{lem}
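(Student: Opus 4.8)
The plan is to treat the two inequalities separately: $\beta\ge\frac12$ is essentially immediate from the restriction formula, whereas $\xi\ge\frac23$ is the genuine content and requires extracting more from the integral rounding in \eqref{kieq1} than the naive asymptotic estimate provides.

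For $\beta\ge\frac12$ I would argue directly from \eqref{eq:restriction}. Since $|G|=\Mov|\sigma^*(K_{F_0})|$ we have $\sigma^*(K_{F_0})\ge C$, so \eqref{eq:restriction} can be rewritten as $\pi^*(K_X)|_F\equiv\frac12 C+\bigl(\frac12(\sigma^*(K_{F_0})-C)+Q'\bigr)$ with an effective remainder. Because $\beta=\beta(|G|)$ is by definition the supremum of those $\beta'$ for which $\pi^*(K_X)|_F\ge\beta' C$, this yields $\beta\ge\frac12$ at once; alternatively one quotes \eqref{cri} with $\zeta=1$.

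For $\xi\ge\frac23$ I would first collect the inputs now available: $\mu\ge\zeta\ge1$ and $\beta\ge\frac12$, so that $\frac1\mu+\frac1\beta\le3$, together with $\deg(K_C)=2$ since $C$ is a genus-$2$ fibre. Feeding these into \eqref{kieq2} gives only $\xi\ge\frac{2}{1+1/\mu+1/\beta}\ge\frac12$; the crucial point is that \eqref{kieq2} discards precisely the ceiling present in \eqref{kieq1}, so the sharp constant $\frac23$ must be recovered from that rounding at a small value of $m$. Arguing by contradiction, I would suppose $\frac12\le\xi<\frac23$ and set $t=\frac1\mu+\frac1\beta\le3$. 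Testing \eqref{kieq1} at $m=6$ one has $\alpha(6)=(5-t)\xi\ge 2\xi>1$ as soon as $\xi>\frac12$, so \eqref{kieq1} applies and forces $6\xi\ge 2+\roundup{\alpha(6)}\ge 2+\roundup{2\xi}=2+2=4$, i.e.\ $\xi\ge\frac23$, a contradiction. The single leftover value $\xi=\frac12$ is excluded by the same device at $m=7$, where $\alpha(7)=\frac{6-t}{2}\ge\frac32>1$ and \eqref{kieq1} would demand $\frac72\ge 2+\roundup{3/2}=4$, which is absurd. Hence $\xi\ge\frac23$.

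I expect the main difficulty to be conceptual rather than computational: one must recognize that \eqref{kieq2} is structurally too lossy to reach the optimal constant and that the gain is hidden in the integer rounding $\roundup{\cdot}$ of \eqref{kieq1}, which only bites at small $m$. The accompanying care lies in checking the validity hypothesis $\alpha(m)>1$ for the chosen $m$, in noting that decreasing $t$ (i.e.\ enlarging $\mu$ or $\beta$) only strengthens the inequality so that the worst case $t=3$ already suffices, and in isolating the boundary value $\xi=\frac12$ for separate treatment at $m=7$.
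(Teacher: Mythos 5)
Your proof is correct. The paper itself gives no argument for this lemma — it simply defers to \cite[2.15]{MA} — but your reconstruction is essentially the standard one behind that citation and uses only tools already set up in this paper: $\beta\ge\frac12$ is indeed immediate from \eqref{eq:restriction} (equivalently from \eqref{cri} with $\zeta=1$, since $\sigma^*(K_{F_0})\ge C$), and the passage from the lossy asymptotic bound \eqref{kieq2} to the sharp constant $\frac23$ via the ceiling in \eqref{kieq1} at $m=6$, with the boundary value $\xi=\frac12$ eliminated at $m=7$, is exactly how this estimate is obtained throughout the Chen--Chen series. Your checks of the hypothesis $\alpha(m)>1$ and of the worst case $\frac1\mu+\frac1\beta\le 3$ are the right points to verify, and they go through.
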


\begin{lem}\label{lem:restric vol}
Under the same condition as that of Lemma \ref{lem:beta}. Assume that $\xi=1$ and $(\pi^*(K_X)|_F)^2>\frac{1}{2}$. Then $\varphi_{6,X}$ is birational.
\end{lem}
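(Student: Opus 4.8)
The plan is to reproduce, for the $(1,2)$-surface $F$, exactly the mechanism already used in Proposition \ref{prop:(2,3) surface} for $(2,3)$-surfaces, the only delicate point being that the generic irreducible element $C$ of $|G|$ is now a genus-$2$ (hence hyperelliptic) curve. First I would record that, by Lemma \ref{lem: distin}, the system $|6K_{X'}|$ already distinguishes different generic irreducible elements of $|M|$ (the fibers $F$) and $|6K_{X'}||_F$ distinguishes different generic irreducible elements of $|G|$ (the curves $C$). By the birationality principle it therefore suffices to show that the restriction of $|6K_{X'}|$ to a general $C$ separates two general points, or equivalently that $|6K_{X'}||_F$ is birational on a general fiber $F$.

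Next I would carry out the same adjunction reduction as in \eqref{eq:(2,3)adjunction}, which only uses \eqref{eq:zeta} and the Kawamata--Viehweg vanishing theorem and is insensitive to the type of $F$: writing $L=\pi^*(K_X)|_F$, one gets
$$|6K_{X'}||_F\lsgeq |K_{X'}+F+\ulcorner 4\pi^*(K_X)\urcorner||_F\lsgeq |K_F+\ulcorner 4L\urcorner|.$$
It then remains to prove that $|K_F+\ulcorner 4L\urcorner|$ is birational on $F$. Here the hypotheses $\xi=1$ and $L^2>\tfrac12$ enter decisively: combined with \eqref{eq:restriction}, which gives $L\equiv \tfrac12 C+\hat{E}_F$, they guarantee that the relevant nef and big divisor $4L-C$ satisfies $(4L-C)^2=16L^2-8\xi=16L^2-8>0$, so that its positivity crosses the threshold required by the surface birationality criteria \cite[Lemma 2.3]{CHP} and \cite[Lemma 2.5]{CHP}. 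Feeding $L^2>\tfrac12$ and \eqref{eq:restriction} into those two lemmas yields the birationality of $|K_F+\ulcorner 4L\urcorner|$, and hence of $\varphi_{6,X}$.

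The step I expect to be the main obstacle is precisely the within-$C$ point separation inside the surface argument, because $C$ is a genus-$2$ curve and is therefore hyperelliptic: the naive restriction to $C$ produces a system of the shape $|K_C+\tilde D|$ with $\deg \tilde D$ only barely above $2$ (the bound $(4-\tfrac1\beta)\xi$ degenerates to $2$ in the boundary case $\beta=\tfrac12$ allowed by Lemma \ref{lem:beta}), and for $\deg\tilde D=2$ with $\tilde D\simQ K_C$ the map would factor through the $g^1_2$ and fail to be birational. The purpose of the assumption $L^2>\tfrac12$ (together with $\xi=1$) is exactly to rule this degenerate case out: it forces $\hat E_F\cdot C>0$ via the Hodge index theorem and Zariski's lemma applied to the fibration structure $C^2=0$, which pushes the effective degree strictly past $2$ and thus past the hyperelliptic obstruction. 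This is the content encapsulated in \cite[Lemma 2.3]{CHP} and \cite[Lemma 2.5]{CHP}, and invoking them closes the proof.
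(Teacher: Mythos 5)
There is a genuine gap at the decisive step. Your reduction to showing that $|K_F+\roundup{4L}|$ (with $L=\pi^*(K_X)|_F$) is birational agrees with the paper, and your closing paragraph correctly locates the difficulty (the restricted system on the genus-$2$ curve $C$ is a priori only $|K_C+\tilde D|$ with $\deg\tilde D=2$, which may be $|2K_C|$ and hence composed with the $g^1_2$) and even the correct source of the rescue ($L^2>\tfrac12$ together with $\xi=1$ forces, via Zariski's lemma on the vertical part, a positive intersection of the ``excess'' of $L$ with $C$). But the mechanism you invoke to finish --- \cite[Lemma 2.3]{CHP} and \cite[Lemma 2.5]{CHP} --- does not deliver this. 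Those are general Reider-type surface criteria, used in this paper only in Proposition \ref{prop:(2,3) surface}: the curve-degree input they need comes from $(\sigma^*(K_{F_0})\cdot C_x)\ge 2$ for curves through a very general point, which fails on a $(1,2)$-surface because the genus-$2$ pencil has $(\sigma^*(K_{F_0})\cdot C)=1$; via \eqref{eq:restriction} this gives only $(4L\cdot C_x)\ge 2$ in general, and exactly $4$ for the pencil members under your hypothesis $\xi=1$, i.e.\ precisely the boundary value at which such criteria cannot rule out the hyperelliptic degeneration. Moreover, your assertion that $4L-C\equiv 2L+2\hat{E}_F$ is nef is unjustified and in general false, since $\hat{E}_F$ may contain curves of negative self-intersection; $(4L-C)^2>0$ only gives positivity of the self-intersection.

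The paper's proof is engineered exactly to repair these two points. It takes the Zariski decomposition $2L+2\hat{E}_F\equiv(2L+N^+)+N^-$, so that $2L+N^+$ is genuinely nef and big and Kawamata--Viehweg applies after subtracting only $N^-$; and its Step 1 runs the Hodge-index/Zariski argument you sketch, but applied to $N^+$ rather than to $\hat{E}_F$, to conclude $(N^+\cdot C)>0$. This refinement matters: knowing $(\hat{E}_F\cdot C)>0$ is not enough, because the positive contribution must come from the part of $2\hat{E}_F$ that can be kept on the nef side of the decomposition --- only then does the vanishing theorem yield $|K_C+D^+|$ with $\deg(D^+)\ge 2\xi+(N^+\cdot C)>2$. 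Your sketch supplies neither the nef decomposition nor the verification that the retained part still meets $C$ positively, so the argument does not close as written.
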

\begin{proof}
Consider the Zariski decomposition of the following $\mathbb{Q}$-divisor:
$$2\pi^*(K_X)|_F+2\hat{E}_F\equiv (2\pi^*(K_X)|_F+N^{+})+N^{-},$$
where
\begin{enumerate}
\item both $N^{+}$ and $N^{-}$ are effective $\mathbb{Q}$-divisors with $N^{+}+N^{-}=2\hat{E}_F$;
\item the $\mathbb{Q}$-divisor $(2\pi^*(K_X)|_F+N^{+})$ is nef;
\item $((2\pi^*(K_X)|_F+N^{+})\cdot N^{-})=0$.
\end{enumerate}
\noindent{\bf Step 1}.  $(\pi^*(K_X)|_F)^2>\frac{1}{2}$ implies $(N^+ \cdot C) > 0$.

Since $C$ is nef, we see $(N^+ \cdot C) \geq 0$. Assume the contrary that $(N^+ \cdot C) =0$.  Then $(N^+)^2\leq 0$ as $C$ is a fiber of the canonical fibration of $F$.
Notice that
$$\frac{1}{2}<(\pi^*(K_X)|_F)^2=\frac{1}{2}(\pi^*(K_X)\cdot C)+(\pi^*(K_X)|_F\cdot \hat{E}_F)$$
implies $(\pi^*(K_X)|_F\cdot \hat{E}_F)>0$. We clearly have $(\pi^*(K_X)|_F\cdot N^+)>0$ by the definition of Zariski decomposition.  Hence
\begin{eqnarray*}
(N^+)^2&=&\big(N^+\cdot (2\pi^*(K_X)|_F-C-N^-)\big)\\
&=&2(N^+\cdot \pi^*(K_X)|_F)+(2\pi^*(K_X)|_F\cdot N^-)>0,
\end{eqnarray*}
a contradiction.
\medskip

\noindent{\bf Step 2}. $(N^+\cdot C)>0$ implies the birationality of $\varphi_{6,X}$.

By Kawamata-Viehweg vanishing theorem, we have
\begin{eqnarray*}
|6K_{X'}||_F&\lsgeq&|K_{X'}+\roundup{5\pi^*(K_X)-E_1'}||_F\\
&\lsgeq& |K_F+\roundup{(5\pi^*(K_X)-E_1')|_F}.
\end{eqnarray*}
Noting that
\begin{eqnarray}
(5\pi^*(K_X)-E_1')|_F&\equiv& 4\pi^*(K_X)|_F\equiv 2\pi^*(K_X)|_F+C+2\hat{E}_F\notag\\
&\equiv& (2\pi^*(K_X)|_F+N^+)+C+N^-,\label{eq:restric vol 1}
\end{eqnarray}
and that $2\pi^*(K_X)|_F+N^+$ is nef and big, the vanishing theorem gives
\begin{equation}
|K_F+\roundup{(5\pi^*(K_X)-E_1')|_F-N^-}||_C=|K_C+D^+|,\label{eq:restrict vol 2}
\end{equation}
where $\deg(D^+)\geq 2\xi+(N^+\cdot C)>2$.  By Lemma \ref{lem: distin}, \eqref{eq:restric vol 1} and \eqref{eq:restrict vol 2}, $\varphi_{6,X}$ is birational.
\end{proof}

\begin{lem}\label{lem:beta2}
Under the same condition as that of Lemma \ref{lem:beta}. If $\beta>\frac{2}{3}$, then $\varphi_{6,X}$ is birational.
\end{lem}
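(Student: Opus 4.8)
The goal is to show that $\varphi_{6,X}$ is birational whenever $\beta>\frac{2}{3}$, under the standing hypotheses that $F$ is a $(1,2)$-surface, $d_1=1$ and $\Gamma\cong\bP^1$. The natural strategy is to invoke the birationality criterion of Theorem \ref{key-birat} with $m=6$. By Lemma \ref{lem: distin}, conditions (i) and (ii) of that theorem are already verified: $|6K_{X'}|$ distinguishes different generic irreducible elements of $|M|$, and $|6K_{X'}||_F$ distinguishes generic irreducible elements of $|G|$. Hence the only thing I must check is condition (iii), namely $\alpha(6)>2$.

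The plan is therefore to estimate $\alpha(6)=(6-1-\frac{1}{\mu}-\frac{1}{\beta})\xi=(5-\frac{1}{\mu}-\frac{1}{\beta})\xi$ from below. Since $F$ is a $(1,2)$-surface we have $\mu\ge\zeta\ge 1$, so $\frac{1}{\mu}\le 1$. The hypothesis $\beta>\frac{2}{3}$ gives $\frac{1}{\beta}<\frac{3}{2}$, so that $5-\frac{1}{\mu}-\frac{1}{\beta}>5-1-\frac{3}{2}=\frac{5}{2}$. Combining this with the lower bound $\xi\ge\frac{2}{3}$ from Lemma \ref{lem:beta} yields $\alpha(6)>\frac{5}{2}\cdot\frac{2}{3}=\frac{5}{3}$, which unfortunately falls short of the required bound $2$. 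So the crude estimate does not immediately close the argument, and the main obstacle is pushing $\alpha(6)$ past $2$.

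To overcome this I would bring in the interplay between $\beta$ and $\xi$ coming from Inequality \eqref{kieq2}. With $\deg(K_C)=2$ (as $C$ is a genus-$2$ curve on the $(1,2)$-surface $F$), that inequality reads $\xi\ge\frac{2}{1+\frac{1}{\mu}+\frac{1}{\beta}}$. When $\beta$ is larger than $\frac{2}{3}$ this forces $\xi$ strictly larger than its a priori minimum $\frac{2}{3}$: for instance if additionally $\mu\ge 1$, then $1+\frac{1}{\mu}+\frac{1}{\beta}<1+1+\frac{3}{2}=\frac{7}{2}$, giving $\xi>\frac{4}{7}$, which by itself is not yet decisive but can be iterated against the Case analysis. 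The cleaner route is to treat the quantity $\alpha(6)$ as a function and exploit that both deficiencies $\frac{1}{\beta}$ and $\frac{1}{\mu}$ enter $\alpha(6)$ and, through \eqref{kieq2}, pull $\xi$ upward; I would split into the regimes $\xi\ge 1$ and $\frac{2}{3}\le\xi<1$. In the regime $\xi\ge 1$ the estimate $\alpha(6)>\frac{5}{2}\cdot 1=\frac{5}{2}>2$ closes immediately. In the regime $\xi<1$ one uses \eqref{kieq2} together with $\beta>\frac{2}{3}$ to bound $\mu$ and $\beta$ more tightly and thereby recover $\alpha(6)>2$, possibly after invoking Theorem \ref{k2}(iv) to improve $\xi$ through the inequality $(n+1)\xi\ge\lceil\cdots\rceil+2$ for a suitable small $n$.

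The step I expect to be the genuine obstacle is precisely this borderline regime $\frac{2}{3}\le\xi<1$ with $\mu$ close to $1$, where the naive numerology gives $\alpha(6)$ only slightly below $2$; here one must extract extra positivity, either from the strict inequality $\beta>\frac{2}{3}$ feeding back through \eqref{kieq2} to force $\xi$ strictly above $\frac{2}{3}$, or from a refined divisor-degree estimate on $C$ as in the proof of Lemma \ref{lem:restric vol}, rather than from the bare inequality $\alpha(6)>2$. I anticipate that the careful bookkeeping of how $\beta>\frac{2}{3}$ propagates to a quantitative lower bound on $\xi$—and hence on $\alpha(6)$—is the crux, and that once $\alpha(6)>2$ is secured the birationality of $\varphi_{6,X}$ follows at once from Theorem \ref{key-birat}.
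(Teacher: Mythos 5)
Your overall strategy is the right one---verify conditions (i) and (ii) of Theorem \ref{key-birat} via Lemma \ref{lem: distin} and then establish $\alpha(6)>2$---but the proposal does not actually close the decisive regime $\frac{2}{3}\le\xi<1$, and the tools you reach for there are either too weak or inapplicable. Inequality \eqref{kieq2} with $\mu\ge 1$ and $\beta>\frac{2}{3}$ only yields $\xi>\frac{4}{7}$, which is weaker than the known bound $\xi\ge\frac{2}{3}$; the loss comes precisely from discarding the round-up in \eqref{kieq1}. And Theorem \ref{k2}(iv) requires the auxiliary hypothesis $M_{m_1}\ge jF+S_1$, which is not available under the bare assumption $\beta>\frac{2}{3}$.

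The missing idea is a bootstrap through \eqref{kieq1} at level $m=5$. Since $\beta>\frac{2}{3}$ gives $\frac{1}{\beta}<\frac{3}{2}$ and $\mu\ge 1$, one has
$$\alpha(5)\ge\Big(5-1-1-\frac{1}{\beta}\Big)\xi>\frac{3}{2}\cdot\frac{2}{3}=1,$$
so $\alpha_0(5)=\roundup{\alpha(5)}\ge 2$ and \eqref{kieq1} yields $5\xi\ge\deg(K_C)+\alpha_0(5)\ge 2+2=4$, i.e.\ $\xi\ge\frac{4}{5}$. It is exactly the integrality gain from the ceiling that upgrades $\xi$ from $\frac{2}{3}$ to $\frac{4}{5}$, something \eqref{kieq2} cannot see. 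With $\xi\ge\frac{4}{5}$ one then gets $\alpha(6)>\big(4-\frac{3}{2}\big)\cdot\frac{4}{5}=2$, and Theorem \ref{key-birat} concludes. You correctly diagnosed where the difficulty lies, but the argument you sketch for that case would not go through as written.
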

\begin{proof}
Since $$\alpha(5)\ge(5-1-1-\frac{1}{\beta})\cdot \xi >1,$$ we have $\xi\ge\frac{4}{5}$ by \eqref{kieq1}. Now, as $\alpha(6)>2$, $\varphi_{6,X}$ is birational by Theorem \ref{key-birat}.
\end{proof}

\begin{lem}\label{lem:zeta}
Under the same condition as that of Lemma \ref{lem:beta}. If $\mu>\frac{4}{3}$, then $\varphi_{6,X}$ is birational.
\end{lem}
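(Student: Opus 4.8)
The plan is to follow exactly the strategy established in Lemma \ref{lem:beta2}: exploit the hypothesis $\mu>\frac{4}{3}$ to first improve the lower bound on $\xi$, and then feed this improved bound into the birationality criterion of Theorem \ref{key-birat}. Recall that for a $(1,2)$-surface we have $\beta\geq\frac{1}{2}$ from Lemma \ref{lem:beta}. The key observation is that the quantity $\alpha(m)=(m-1-\frac{1}{\mu}-\frac{1}{\beta})\xi$ depends on $\mu$ through the term $-\frac{1}{\mu}$, and the hypothesis $\mu>\frac{4}{3}$ gives $\frac{1}{\mu}<\frac{3}{4}$, which is a genuine improvement over the a priori bound $\frac{1}{\mu}\leq 1$ coming from $\mu\geq\zeta\geq 1$.

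First I would estimate $\alpha(5)$. Using $\mu>\frac{4}{3}$ (so $\frac{1}{\mu}<\frac{3}{4}$) and $\beta\geq\frac{1}{2}$ (so $\frac{1}{\beta}\leq 2$), together with $\xi\geq\frac{2}{3}$, I compute
$$\alpha(5)=\Big(5-1-\frac{1}{\mu}-\frac{1}{\beta}\Big)\xi>\Big(4-\frac{3}{4}-2\Big)\xi=\frac{5}{4}\xi\geq\frac{5}{4}\cdot\frac{2}{3}=\frac{5}{6}.$$
This is not yet greater than $1$, so the naive estimate is slightly too weak and the argument cannot be a verbatim copy of Lemma \ref{lem:beta2}. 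The natural fix is to run the bootstrapping inequality \eqref{kieq1} more carefully: once $\alpha(m)>1$ one gets $m\xi\geq\deg(K_C)+\alpha_0(m)=2+\alpha_0(m)$ since $C$ has genus $2$. I would therefore look for the smallest $m$ (likely $m=5$, possibly after a preliminary step with a smaller value of $\xi$ plugged in) for which $\alpha(m)>1$ holds, extract the resulting constraint $\xi\geq\frac{\deg(K_C)}{1+\frac{1}{\mu}+\frac{1}{\beta}}$ from \eqref{kieq2}, and thereby push $\xi$ up to at least $\frac{4}{5}$ (or whatever threshold makes $\alpha(6)>2$ with the given $\mu,\beta$).

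The main obstacle I anticipate is the borderline nature of the numerics: with only $\beta\geq\frac{1}{2}$ the term $\frac{1}{\beta}$ can be as large as $2$, which nearly cancels the gain from $\mu>\frac{4}{3}$ at level $m=5$. The clean way around this is a case split on $\beta$. If $\beta>\frac{2}{3}$ then Lemma \ref{lem:beta2} already gives the conclusion directly, so I may assume $\frac{1}{2}\leq\beta\leq\frac{2}{3}$. In the complementary regime I would either use the sharper interplay between $\beta$ and $\xi$ (recall $\pi^*(K_X)|_F\geq\beta C$ and $\xi=(\pi^*(K_X)\cdot C)$, so small $\beta$ forces structural constraints on how $C$ sits in $F$), or invoke the self-intersection inequality \eqref{kcube}, $K_X^3\geq\mu\beta\xi$, to rule out the bad corner. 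Once $\xi$ is improved to the point that $\alpha(6)=(6-1-\frac{1}{\mu}-\frac{1}{\beta})\xi>2$, Lemma \ref{lem: distin} supplies conditions (i) and (ii) of Theorem \ref{key-birat} automatically (these hold for any $(1,2)$-surface fibration), and condition (iii) is precisely $\alpha(6)>2$, so $\varphi_{6,X}$ is birational and the proof concludes.
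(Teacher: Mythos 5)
There is a genuine gap. You correctly compute that with only $\beta\geq\frac{1}{2}$ the estimate $\alpha(5)>\frac{5}{4}\xi\geq\frac{5}{6}$ falls short of $1$, but the fixes you then propose do not close it. The missing idea is that the hypothesis $\mu>\frac{4}{3}$ improves not only the $\frac{1}{\mu}$ term but also $\beta$ itself, via the Kawamata-extension consequence \eqref{cri}: $\pi^*(K_X)|_F\geq\frac{\mu}{\mu+1}\sigma^*(K_{F_0})$, and since $t\mapsto\frac{t}{t+1}$ is increasing, $\mu>\frac{4}{3}$ gives $\beta>\frac{4}{7}$, i.e.\ $\frac{1}{\beta}<\frac{7}{4}$. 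With both improvements one gets
$$\alpha(5)=\Big(4-\tfrac{1}{\mu}-\tfrac{1}{\beta}\Big)\xi>\Big(4-\tfrac{3}{4}-\tfrac{7}{4}\Big)\cdot\tfrac{2}{3}=1,$$
whence $\xi\geq\frac{4}{5}$ by \eqref{kieq1}, and then $\alpha(6)>\frac{5}{2}\xi\geq 2$ finishes by Theorem \ref{key-birat}. This is exactly the paper's two-line argument.

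Your proposed alternatives for the regime $\frac{1}{2}\leq\beta\leq\frac{2}{3}$ do not substitute for this. The inequality $K_X^3\geq\mu\beta\xi$ gives a lower bound on the volume, but nothing in the hypotheses bounds $K_X^3$ from above, so there is no "bad corner" to rule out by contradiction; and the appeal to "structural constraints on how $C$ sits in $F$" is not an argument. Likewise, iterating \eqref{kieq1} cannot start until some $\alpha(m)>1$ is established, and at $m=5$ with $\beta=\frac{1}{2}$ and $\xi=\frac{2}{3}$ you only reach $\alpha(5)>\frac{5}{6}$; going to $m=6$ or $7$ first would only yield bounds relevant to $\varphi_7$ or higher, not the $\alpha(6)>2$ you need. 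The single missing ingredient is \eqref{cri}; once you insert it, your outline becomes the paper's proof.
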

\begin{proof} By assumption and \eqref{cri}, one has
 $\beta\ge\frac{4}{7}$. So $$\alpha(5)=(5-1-\frac{1}{\mu}-\frac{1}{\beta})\cdot\xi>1,$$ which  implies $\xi\ge\frac{4}{5}$. Since  $\alpha(6)>\frac{5}{2}\cdot\xi\ge 2$,  $\varphi_{6,X}$ is birational for the similar reason.
\end{proof}

\begin{lem}\label{lem:Pm1 bir}
Let $X$ be a minimal projective $3$-fold of general type with $p_g(X)=2$, $d_1=1$ and $\Gamma\cong\bP^1$. Keep the notation in \ref{setup}.  Assume that $F$ is a $(1,2)$-surface.
Let $m_1\ge 2$ be any integer. Then $\varphi_{6,X}$ is birational provided that one of the following holds:
\begin{enumerate}
\item[(i)] $u_{m_1,0}=h^0(F, m_1K_F)$;
\item[(ii)] $h^0(M_{m_1}-jF)\geq \rounddown{\frac{4}{3}m_1}-j+2\ge 2$ and $u_{m_1, -j}\le 1$ for some integer $j\ge 0$.
\end{enumerate}
\end{lem}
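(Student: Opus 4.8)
The plan is to show that each of the two hypotheses forces enough sections of $|6K_{X'}|$ to restrict onto $F$ and then onto $C$ to separate points, via the two-step adjunction/vanishing machinery already set up in Subsection \ref{3.2} and Theorem \ref{key-birat}. Throughout I keep in mind that, by Lemma \ref{lem: distin}, $|6K_{X'}|$ already distinguishes different generic irreducible elements of $|M|$ and $|6K_{X'}||_F$ distinguishes those of $|G|$, so the only thing at stake is birationality \emph{along} a general curve $C$ in $|G|\subset F$. Equivalently, I must arrange that the restricted system $|M_6||_C$ is birational, which by the vanishing theorem reduces to producing a divisor $D$ on $C$ with $|M_6||_C\succcurlyeq|K_C+D|$ and $\deg D>2$ (here $C$ has genus $2$).

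For part (i), the point is that $u_{m_1,0}=h^0(F,m_1K_F)$ says the restriction map $\theta_{m_1,0}\colon H^0(X',M_{m_1})\to H^0(F,M_{m_1}|_F)$ is surjective onto the full $|m_1K_F|$; that is, $|M_{m_1}||_F$ realizes the complete $m_1$-canonical system of the $(1,2)$-surface $F$. First I would use \eqref{cri} together with $M_{m_1}\le m_1\pi^*(K_X)$ to see that $m_1\pi^*(K_X)|_F$ dominates a fixed positive multiple of $\sigma^*(K_{F_0})$, and combine this with the surjectivity to pull the bicanonical (hence birational-distinguishing) behavior of $F$ up to $X'$. Concretely, since $|m_1K_F|$ already separates the generic curves $C$ and, for a $(1,2)$-surface, $|K_{F_0}+mK_{F_0}|$ is birational for suitable $m$, the surjectivity lets me transport a birational subsystem on $F$ to $|6K_{X'}|$ via an adjunction step $|6K_{X'}||_F\succcurlyeq|K_F+\lceil(\text{nef big})\rceil|$ of the same shape as \eqref{ma}. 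The main work is bookkeeping of the coefficient that guarantees $\deg D>2$ on $C$; I expect the inequality $\xi\ge 2/3$ from Lemma \ref{lem:beta} to be exactly enough once the full $|m_1K_F|$ is available.

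For part (ii), the hypothesis $h^0(M_{m_1}-jF)\ge\lfloor\tfrac{4}{3}m_1\rfloor-j+2\ge 2$ furnishes a base-point-free (after modification) movable system sitting inside $M_{m_1}-jF$, so I can write $M_{m_1}\ge jF+S_1$ with $h^0(S_1|_F)\ge 1$; the second condition $u_{m_1,-j}\le 1$ controls the failure of $\theta_{m_1,-j}$ and, by the exact-sequence definition in Definition \ref{twomaps}, forces $S_1|_F$ to contribute a genuinely new linear system on $F$ rather than collapsing into $|G|$. The plan is then to feed this directly into Theorem \ref{k2} (or its companion Theorem \ref{kf} when $C'=0$) with $m_1$ as given and $n+1=6$: one checks that the numerical quantity $\lfloor\cdots\rfloor+2$ appearing there is $\le 6$ under the stated bound $h^0(M_{m_1}-jF)\ge\lfloor\tfrac{4}{3}m_1\rfloor-j+2$, which is precisely calibrated so that $\alpha(6)$-type quantity exceeds $2$. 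The comparison of $\tilde\delta=(C_1\cdot C)$ against $2j$ splits into the cases of Theorem \ref{k2}, and in each the birationality of $\varphi_{6,X}$ drops out.

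The hard part will be the second hypothesis: I must verify that $u_{m_1,-j}\le 1$ genuinely prevents $|S_1|_F|$ and $|G|$ from being composed of the same pencil, so that the hypothesis ``$|S_1|_F|$ and $|G|$ are not composed of the same pencil'' of Theorem \ref{k2} is met and $\tilde\delta=(C_1\cdot C)>0$. If instead $|S_1|_F|$ and $|G|$ \emph{were} composed of the same pencil, then $S_1|_F$ would be numerically a multiple of $C$ and the sections counted by $h^0(M_{m_1}-jF)$ would all restrict into the one-dimensional space of multiples of $C$, contradicting the gap between $u_{m_1,-j}\le 1$ and the lower bound $\lfloor\tfrac{4}{3}m_1\rfloor-j+2\ge 2$. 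Making this dichotomy precise, and confirming that it lands one squarely in the hypotheses of Theorem \ref{k2}/\ref{kf} with the arithmetic giving $6$, is the crux; the remaining estimates are the same routine vanishing-and-degree computations already displayed in the proofs of Theorems \ref{k2} and \ref{k3}.
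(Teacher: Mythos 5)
There is a genuine gap, concentrated in your reading of hypothesis (ii). The condition $u_{m_1,-j}\le 1$ says that the image of the restriction map $\theta_{m_1,-j}\colon H^0(X',M_{m_1}-jF)\to H^0(F,M_{m_1}|_F)$ is at most one-dimensional; combined with $h^0(M_{m_1}-jF)\ge 2$, this means the moving part of $|M_{m_1}-jF|$ restricts trivially to a general fiber $F$, i.e.\ $|M_{m_1}-jF|$ is composed of the \emph{fiber pencil} $|F|$ itself. You have read it the opposite way: you claim it ``forces $S_1|_F$ to contribute a genuinely new linear system on $F$'' and that it ``prevents $|S_1|_F|$ and $|G|$ from being composed of the same pencil,'' and your proposed contradiction (sections restricting into a one-dimensional space would contradict $u_{m_1,-j}\le 1$) is exactly the situation the hypothesis asserts, not one it excludes. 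Consequently the whole strategy of feeding a horizontal system $S_1|_F$ into Theorem \ref{k2}/\ref{kf} is unavailable: there is no nontrivial $C_1$ on $F$ to intersect with $C$. The intended (and much shorter) argument is: being composed of $|F|$ gives $M_{m_1}\ge jF+(\rounddown{\frac{4}{3}m_1}-j+1)F=(\rounddown{\frac{4}{3}m_1}+1)F$, hence $\mu\ge\frac{\rounddown{\frac{4}{3}m_1}+1}{m_1}>\frac{4}{3}$, and Lemma \ref{lem:zeta} concludes.

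Part (i) is closer in spirit but still misses the decisive step and contains a numerical slip. Surjectivity of $\theta_{m_1,0}$ gives $M_{m_1}|_F\ge \Mov|m_1K_F|\ge m_1C$, hence $\beta\ge 1$; the paper then simply invokes Lemma \ref{lem:beta2}. Your plan instead attempts a direct adjunction/vanishing computation on $C$ and asserts that $\xi\ge\frac{2}{3}$ ``is exactly enough.'' It is not: with $\beta=1$, $\mu=1$, $\xi=\frac{2}{3}$ one gets $\alpha(6)=3\xi=2$, which fails the strict inequality $\alpha(6)>2$ of Theorem \ref{key-birat}. One must first bootstrap via $\alpha(5)>1$ and Inequality \eqref{kieq1} to improve $\xi$ to $\frac{4}{5}$ (this is precisely what Lemma \ref{lem:beta2} does), and only then does $\alpha(6)>2$ follow. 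So part (i) of your proposal is repairable once you extract $\beta\ge 1$ explicitly and add the bootstrap, but part (ii) rests on a misinterpretation of the hypothesis and needs to be replaced by the $\mu>\frac{4}{3}$ argument.
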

\begin{proof}
(i). Since $\theta_{m_1,0}$ is surjective, we have $m_1\pi^*(K_X)|_F\ge m_1 C$, which means $\beta=1$. Hence $\varphi_{6,X}$ is birational by Lemma \ref{lem:beta2}.

(ii). By assumption, $|M_{m_1}-jF|$ and $|F|$ are composed of same pencil. Hence we have $\mu >\frac{4}{3}$ and  $\varphi_{6,X}$ is birational by Lemma \ref{lem:zeta}.
\end{proof}

\begin{lem}\label{lem:charac K1} Under the same condition as that of Lemma \ref{lem:beta}. Suppose that $\xi=\frac{4}{5}$. Then $\varphi_{6,X}$ is not birational.
\end{lem}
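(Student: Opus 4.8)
The plan is to reduce the birationality of $\varphi_{6,X}$ to that of its restriction to a general genus $2$ curve $C$, and then to show that the degree forced on $C$ by $\xi=\frac45$ is too small for any birational image.

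First I would invoke Lemma \ref{lem: distin}: $|6K_{X'}|$ distinguishes general fibres $F$ of $f$ and $|6K_{X'}|\big|_F$ distinguishes general members $C$ of $|G|$. Hence $\varphi_{6,X}$ is birational if and only if its restriction to a general such $C$ is, and it suffices to prove this restriction is non-birational. Writing $M_6$ for the moving part of $|6K_{X'}|$ and using $M_6\le 6\pi^*(K_X)$ together with $(\pi^*(K_X)\cdot C)=\xi=\frac45$, I get $\deg(M_6|_C)\le 6\xi=\frac{24}{5}<5$, so $\deg(M_6|_C)\le 4$. Moreover the easy criterion is structurally unavailable: since $6\xi=\frac{24}{5}$, inequality \eqref{kieq1} forces $\roundup{\alpha(6)}\le 2$, hence $\alpha(6)\le 2$, so hypothesis (iii) of Theorem \ref{key-birat} can never hold. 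This is exactly why $\xi=\frac45$ lies on the non-birational boundary and a hands-on argument on $C$ is needed.

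Next I would use that $C$ has genus $2$, with unique special system the hyperelliptic pencil $|K_C|=g^1_2$. If $\deg(M_6|_C)\le 3$, then no subsystem can map $C$ birationally onto its image, since a curve of genus $2$ has no birational model of degree $\le 3$; thus $\varphi_{6,X}|_C$ is automatically non-birational. The only delicate case is $\deg(M_6|_C)=4$: a birational image would then be a plane quartic of geometric genus $2$, forcing the restricted system to be the complete $g^2_4$ attached to $L:=M_6|_C$ with $L\not\sim 2K_C$ (if $L\sim 2K_C$ the map factors as $C\to\bP^1\to\bP^2$ through the hyperelliptic double cover and is $2$-to-$1$). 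The task is to rule this out.

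The heart of the proof, and the main obstacle, is therefore the degree $4$ case: I must show either that the restricted system $|M_6|\big|_C$ has projective dimension $\le 1$ (so $\varphi_{6,X}|_C$ factors through a pencil), or that $M_6|_C\sim 2K_C$ (so it factors through the hyperelliptic involution). I would attack this by controlling the actual linear equivalence class of $M_6|_C$ on $C$, not merely its numerical class: running the Kawamata--Viehweg restriction argument of Theorem \ref{k2} with the decomposition $\pi^*(K_X)|_F\equiv\frac12 C+\hatE_F$ of \eqref{eq:restriction} (so that $\hatE_F\cdot C=\xi=\frac45$), and exploiting that the fractional degree $\frac45$ makes the relevant round-ups collapse onto multiples of $K_C$. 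Here the section $H$ of the genus $2$ fibration on the $(1,2)$-surface and the relation $\sigma^*(K_{F_0})\cdot C=1$ are what should pin $M_6|_C$ to the hyperelliptic class. Passing from numerical to genuine linear equivalence on $C$ is the genuinely hard step.
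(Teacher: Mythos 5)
Your reduction is sound as far as it goes: restricting to a general $C$, bounding $\deg(M_6|_C)\le\rounddown{6\xi}=4$, disposing of degree $\le 3$ by Clifford/Riemann--Roch, and isolating the degree-$4$ case where everything hinges on showing $M_6|_C\sim 2K_C$ (or that the restricted system has dimension $\le 1$) -- this matches the skeleton of the paper's argument, which likewise pins down $\deg(M_6|_C)=4$, the lower bound coming from Kawamata--Viehweg applied to $4\pi^*(K_X)|_F-2\hatE_F$, giving $|M_6||_C\lsgeq|K_C+\hat D|$ with $\deg(\hat D)\ge 2$. But the step you yourself label ``the genuinely hard step'' is exactly the step you do not supply, and the tool you propose for it cannot supply it: running vanishing against the numerical decomposition $\pi^*(K_X)|_F\equiv\frac12C+\hatE_F$ of \eqref{eq:restriction} only controls numerical classes and degrees of the resulting divisors on $C$, and no amount of round-up bookkeeping converts $\deg(M_6|_C)=4$ into the linear equivalence $M_6|_C\sim 2K_C$. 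As written, the proposal establishes only the degree, which -- as you note -- is still compatible with a birational map onto a nodal plane quartic. So the lemma is not proved.

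The missing idea in the paper is to replace the numerical decomposition by an honest effective divisor, which is available precisely because $p_g(X)=2$. Since $|M|$ is composed of the pencil $|F|$ and $h^0(M)=2$, one has $M\sim F$; hence a general $K\in|K_X|$ satisfies $\pi^*(K)=F'+E_1'$ with $F'$ a fiber disjoint from the fixed general $F$, so $\pi^*(K)|_F=E_1'|_F$ is a concrete effective $\bQ$-divisor with $(\pi^*(K)|_F\cdot C)=\xi=\frac45$ whose support meets $C$ in a single point $P$ (this uses $\deg(\sigma^*(K_{F_0})|_C)=1$), and on a $(1,2)$-surface this point satisfies $2P\in|K_C|$. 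Therefore
$$M_6|_C\ \le\ \rounddown{\big(6\pi^*(K)|_F\big)\big|_C}\ =\ 4P\ \sim\ 2K_C,$$
and comparison with the lower bound $|M_6||_C\lsgeq|K_C+\hat D|$, $\deg(\hat D)\ge2$, forces $|M_6||_C=|2K_C|$, which factors through the hyperelliptic involution. This passage from a numerical class to an actual linear equivalence via a genuine member of $|K_X|$ is the content your sketch defers, and it is the whole point of the lemma.
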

\begin{proof}
Write $|M_6|=\mathrm{Mov}|6K_{X'}|$. One may assume that $|M_6|$ is base point free. Denote by $G_{6,0}$ a general member of $|M_6|_F|$. Then $$|G_{6,0}|_C|\lsgeq |G_{6,0}||_C\lsgeq |M_6||_C.$$ Since $\alpha(6)>1$, we have $(6\pi^*(K_X)|_F\cdot C)\ge (G_{6,0}\cdot C)\ge 4 $ by \eqref{kieq1}. Therefore $(G_{6,0}\cdot C)=4$ by the assumption. On the other hand, the vanishing theorem gives
\begin{eqnarray*}
|6K_{X'}||_C &\lsgeq & |K_F+\roundup{4\pi^*(K_X)|_F}||_C\\
&\lsgeq & |K_F+\roundup{4\pi^*(K_X)|_F-2\hat{E}_F}||_C\\
&\lsgeq& |K_F+\hat{D}|,
\end{eqnarray*}
where $\deg(\hat{D})\geq 2$, which forces $|S_{6,0}||_F|_C=|G_{6,0}|_C|$.  Take a general effective divisor $K\in |K_X|$. Then $\mathrm{supp}(\pi^*(K)|_F)|_C$ consists of just one point $P\in C$ since $\deg(\sigma^*(K_{F_0})|_C)=1$. Here the divisor $P$ satisfies $2P\in |K_C|$. So $|4P|=|\llcorner (6\pi^*K|_F)|_C\lrcorner| \lsgeq |G_{6,0}|_C|$. Therefore the restriction of the linear system $|S_{6,0}|$ on $C$ is just $|2K_C|$, which implies that $\varphi_{6,X}$ is not birational.
\end{proof}

\subsection{Effective constraints on $P_2$, $P_3$, $P_4$, $P_5$ and $P_6$}

This subsection is devoted to link some numerical constrains on plurigenera $P_i(X)$ ($i=1, \ldots, 6$) to the birationality of $\varphi_6$. 

The following proposition is the prototype for Propositions \ref{prop:P3}, \ref{prop:P4}, \ref{prop:P5} and \ref{prop:P6}.
\begin{prop}\label{prop:P2}
Under the same condition as that of Lemma \ref{lem:beta}, then
\begin{enumerate}
\item when $P_2(X)\ge 6$, $\varphi_{6,X}$ is birational;
\item when $P_2(X)=5$,  $\varphi_{6,X}$ is not birational if and only if
$$\xi=1\ \ \ \text{and}\ \ \ (\pi^*(K_X)|_F)^2=\frac{1}{2}.$$
 \end{enumerate}
\end{prop}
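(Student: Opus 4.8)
The plan is to read off $P_2(X)$ from the restriction filtration of $|M_2|$ along the canonical fibration $f$, and then feed the resulting divisors into the birationality machine of Section 3. Since $F|_F\equiv 0$, the maps $\theta_{2,-j}$ of Definition \ref{twomaps} give
$$P_2(X)=h^0(X',M_2)=\sum_{j\ge 0}u_{2,-j},$$
where the dimensions $u_{2,-j}$ are non-increasing and bounded above by $h^0(F,2K_F)=P_2(F_0)=4$. First I would dispose of the high-positivity cases: by Lemmas \ref{lem:beta2} and \ref{lem:zeta} I may assume $\beta\le\tfrac23$ and $\mu\le\tfrac43$. Under these bounds the filtration $\{H^0(M_2-jF)\}$ is effectively short, since large $j$ would force $|M_2-jF|$ to be composed with the pencil $|F|$ and hence $\mu>\tfrac43$, exactly as in the proof of Lemma \ref{lem:Pm1 bir}(ii); thus $P_2(X)$ is the sum of the first few $u_{2,-j}$, each at most $4$. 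If $u_{2,0}=4$, then Lemma \ref{lem:Pm1 bir}(i) already yields the birationality of $\varphi_{6,X}$, so I may also assume $u_{2,0}\le 3$.

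For statement (1), assume $P_2(X)\ge 6$ with $u_{2,0}\le 3$. The key observation is that any $j$ with $u_{2,-j}\ge 2$ produces, from the image $U_{2,-j}\subseteq H^0(F,M_2|_F)$, a moving divisor $S_1$ on $X'$ (base point free after a further blow-up) with $M_2\ge jF+S_1$ and $h^0(S_1|_F)\ge 2$; this is precisely the input of Theorems \ref{k2}, \ref{k3} and \ref{kf} with $m_1=2$. I would then run through the finitely many non-increasing triples $(u_{2,0},u_{2,-1},u_{2,-2})$ of sum $\ge 6$ with $u_{2,0}\le 3$ (such as $(3,3,0)$, $(3,2,1)$, $(2,2,2)$, and so on), in each case choosing the largest admissible $j$ and applying the appropriate technical theorem, using $\xi\ge\tfrac23$ and $\beta\ge\tfrac12$ (Lemma \ref{lem:beta}) to bound the resulting pluricanonical index by $6$. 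When $|S_1|_F|$ happens to be composed of the same pencil as $|G|$, the $C'=0$ variant of Theorem \ref{kf} (or a direct bound on $\mu$) applies instead. Where the index estimate is tight I would fall back on Theorem \ref{key-birat} together with the estimate \eqref{kieq1}.

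For statement (2), assume $P_2(X)=5$. For the direction $(\Leftarrow)$, suppose $\xi=1$ and $(\pi^*(K_X)|_F)^2=\tfrac12$. Then \eqref{eq:restriction}, the vanishing theorem and Theorem \ref{KaE} let me restrict $|M_6|$ first to $F$ and then to the genus-$2$ fibre $C$, and an extremality computation along the lines of Lemma \ref{lem:charac K1} shows the restricted system is at most $|2K_C|$, i.e. it factors through the hyperelliptic $g^{1}_{2}=|\sigma^*(K_{F_0})|_C|$; hence $\varphi_{6,X}$ is non-birational. For the direction $(\Rightarrow)$, if $\varphi_{6,X}$ is non-birational then every sufficient criterion must fail: $\beta\le\tfrac23$, $\mu\le\tfrac43$, $u_{2,0}\le 3$, the Section 3 theorems yield nothing, and by Remark \ref{rem:xi=2/3}, Lemma \ref{lem:charac K1} and Lemma \ref{lem:restric vol} the value $\xi$ avoids the birational ranges. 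Analysing the triples $(u_{2,0},u_{2,-1},u_{2,-2})$ of sum $5$ then shows that the extremal values $\xi\in\{\tfrac23,\tfrac45\}$ force $P_2(X)\le 4$, so $\xi=1$ is forced; and non-birationality with $\xi=1$ forces $(\pi^*(K_X)|_F)^2=\tfrac12$ by Lemma \ref{lem:restric vol}.

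The hard part will be the borderline bookkeeping common to both statements: for the extremal configurations the technical theorems of Section 3 miss birationality by a single unit in the pluricanonical index, and there one must pin down $\xi$ and $(\pi^*(K_X)|_F)^2$ exactly. This requires combining the Hodge index theorem on $F$ with the genus-$2$ fibration structure (the relations $C^2=0$, $\sigma^*(K_{F_0})\cdot C=1$ and the splitting \eqref{eq:restriction}) and the precise values of $\mu$ and $\beta$, so as to separate the genuinely birational configurations from the single extremal non-birational case $\xi=1$, $(\pi^*(K_X)|_F)^2=\tfrac12$. Controlling this case-by-case interplay, rather than any single clean inequality, is the real content of the proof and the template that will be reused for $P_3,\dots,P_6$.
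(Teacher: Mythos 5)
Your overall architecture matches the paper's: reduce via Lemma \ref{lem:Pm1 bir} to $u_{2,0}\le 3$, run a case analysis on the filtration numbers $u_{2,-j}$ (the identity $P_2(X)=\sum_j u_{2,-j}$ is exactly what the paper uses implicitly), feed the resulting decompositions $M_2\ge jF+S_1$ into the Section~3 machinery and \cite[Proposition 3.5, 3.6]{CHP}, and in the forward direction of (2) pin down $\xi=1$ and then invoke Lemma \ref{lem:restric vol} to get $(\pi^*(K_X)|_F)^2=\frac{1}{2}$. Part (1) and the $(\Rightarrow)$ half of part (2) are therefore sound in outline (modulo the usual care that with $m_1=2$ one mostly has $j=1$, so Theorem \ref{k2} is unavailable and one must lean on Theorems \ref{k3}, \ref{kf} or the CHP propositions, as the paper does).

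There is, however, a genuine gap in your $(\Leftarrow)$ direction of part (2). You propose to restrict $|M_6|$ to the genus-$2$ curve $C$ and show, ``along the lines of Lemma \ref{lem:charac K1}'', that the restriction lands in $|2K_C|$. That lemma works only because $\xi=\frac{4}{5}$ forces $\deg(M_6|_C)\le \lfloor 6\xi\rfloor=4=\deg(2K_C)$. Here $\xi=1$, so $\deg(M_6|_C)$ can be as large as $6$; a degree-$5$ or degree-$6$ subsystem of the form $|K_C+D|$ with $\deg D\ge 3$ on a genus-$2$ curve is typically very ample, so restriction to $C$ cannot establish non-birationality. Note also that your sketch never uses the hypothesis $P_2(X)=5$ in this direction, which is a warning sign: the analogous implication under $P_4(X)=14$ (Proposition \ref{prop:P4}(2.2)) requires a different auxiliary curve. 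The paper's actual argument first uses $P_2(X)=5$ together with $\beta=\frac{1}{2}$, $\mu=1$ to force $u_{2,0}=3$ and to show that $|M_2|_F|$ is a generically finite net whose generic irreducible element $C_2$ satisfies $C_2^2=2$ and $(\pi^*(K_X)|_F\cdot C_2)=1$, whence $|C_2|_{C_2}|$ is a $g^1_2$; it then restricts $|M_6|$ to the covering family $\{C_2\}$, where the degree bound $(6\pi^*(K_X)|_F\cdot C_2)=6$ exactly matches $\deg(K_{C_2}+C_2|_{C_2})$ and forces $|M_6||_{C_2}=|K_{C_2}+C_2|_{C_2}|$, a system composed with that $g^1_2$. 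Producing this alternative curve class is the essential idea your proposal is missing.
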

\begin{proof}
Set $m_1=2$. By Lemma \ref{lem:Pm1 bir}, we may assume $$u_{2,0}\le h^0(2K_F)-1=3.$$

{\bf Case 1.}  $u_{2,0}\leq 3$ and $u_{2,-1}=3$.

There is a moving divisor $S_{2,-1}$ on $X'$ such that
$$M_2\ge F+S_{2,-1}$$
and $h^0(F, S_{2,-1}|_F)\ge3$. Modulo further birational modification, we may assume that $|S_{2,-1}|$ is base point free. Denote by $C_{2,-1}$ a generic irreducible element of $|S_{2,-1}|_F|$. Then $|C_{2,-1}|$ is moving since $q(F)=0$.

If $|S_{2,-1}|_F|$ and $|C|$ are composed of same pencil, then
$$M_2|_F\ge S_{2,-1}|_F\ge 2C,$$
which means $\beta\ge 1$ and $\varphi_{6,X}$ is birational by Lemma \ref{lem:beta2}.

If $|S_{2,-1}|_F|$ and $|C|$ are not composed of the same pencil, then $\varphi_{6,X}$ is birational by \cite[Proposition 3.6]{CHP} (1).
\medskip

{\bf Case 2.} $u_{2,0}\leq 3$ and $u_{2,-1}=2$.

If $|S_{2,-1}|_F|$ and $|C|$ are not composed of the same pencil, then $\varphi_{6,X}$ is birational by \cite[Proposition 3.6]{CHP}(1).

If $|S_{2,-1}|_F|$ and $|C|$ are composed of the same pencil, then we have $\xi\ge\frac{4}{5}$ by \cite[Proposition 3.6]{CHP} (2.1) (as $2\xi( |G|)\ge\frac{4}{3}>1$). By \cite[Proposition 3.6]{CHP} (2.2) (taking $n=3$),  $\varphi_{6,X}$ is birational.
\medskip

{\bf Case 3.}  $u_{2,0}\leq 3$, $u_{2,-1}\le 1$ and $P_2(X)\ge 6$.

We have $h^0(M_2-F)\ge3$. Hence $\varphi_{6,X}$ is birational by Lemma \ref{lem:Pm1 bir} (ii).

Suppose $P_2(X)=5$.

We first assume that $\varphi_{6,X}$ is not birational. By the arguments in {\bf Case 1}-{\bf Case 3}, we have $u_{2,0}\le 3$ and $u_{2,-1}=1$. If $u_{2,0}\le 2$, one has $h^0(M_2-F)\ge 3$ by our assumption. Thus we have $\mu\ge \frac{3}{2}$. Lemma \ref{lem:zeta} implies that $\varphi_{6,X}$ is birational, which is a contradiction. So we have $u_{2,0}=3$. If $|M_2|_F|$ and $|C|$ are composed of the same pencil, we get $\beta\ge 1$. Lemma \ref{lem:beta2} implies that $\varphi_{6,X}$ is birational. So $|M_2|_F|$ and $|C|$ are not composed of the same pencil. Thus we have $$\xi=(\pi^*(K_X)|_F\cdot C)\ge\frac{1}{2}( M_2|_F\cdot C)\ge 1.$$ Lemma \ref{lem:restric vol} implies that $(\pi^*(K_X)|_F)^2=\frac{1}{2}$.

Conversely, assume that $\xi=1$ and $(\pi^*(K_X)|_F)^2=\frac{1}{2}$. Observing that $\beta>\frac{1}{2}$ induces $(\pi^*(K_X)|_F)^2>\frac{1}{2}$. Since we assume that $\xi=1$, by the argument in {\bf Case 1}-{\bf Case 3}, one has $u_{2,0}\le 3$ and $u_{2,-1}=1$. If $u_{2,0}\le 2$, we have $h^0(F, M_2-F)\ge 3$. By  \cite[Proposition 3.5]{CHP}, one has $\beta>\frac{3}{5}$, which contradicts to our assumption. So we have $u_{2,0}=3$. Similarly $|M_2|_F|$ induces a generically finite morphism. Pick a generic irreducible element $C_2$ in $|M_2|_F|$.

Since $$(\pi^*(K_X)|_F)^2\ge\frac{1}{2}(\pi^*(K_X)|_F\cdot C_2)\ge\frac{1}{4}C_2^2,$$ we have $C_2^2=2$ and $(\pi^*(K_X)|_F\cdot C_2)=1$.  Since $|C_2|_{C_2}|$ is generically finite,  $|C_2|_{C_2}|$ is a $g^1_{2}$.  By Kawamata-Viehweg vanishing theorem, we have
\begin{align*}
|6K_{X'}||_F&\lsgeq|K_{X'}+F+2M_2||_F\lsgeq |K_F+2C_2|
\end{align*}
and
\begin{align*}
|K_F+2C_2||_{C_2}=|K_{C_2}+C_2|.
\end{align*}
So $|M_6||_{C_2}\lsgeq |K_{C_2}+C_{2}|_{C_2}|$. Since $6=(6\pi^*(K_X)|_F\cdot C_{2})\ge \mathrm{deg}(M_6|_{C_2})$, we have $$|M_6|_{C_2}|=|M_6||_{C_2}=|K_{C_2}+C_{2}|_{C_2}|.$$ $|C_2|_{C_2}|$ is $g^1_2$ implies that $\varphi_{6,X}$ is not birational.
\end{proof}

The interested  reader should  read carefully the proof of the next following four Propositions . Otherwise, one can possibly skip the proofs knowing only that they on the same lines as the previous one, with an increasing order of cases and difficulties.

\begin{prop}\label{prop:P3}
Under the same condition as that of Lemma \ref{lem:beta}, if $P_3(X)\ge 9$, then $\varphi_{6,X}$ is birational.
\end{prop}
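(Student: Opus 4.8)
The plan is to follow the template set by Proposition \ref{prop:P2}, now setting $m_1=3$ and exploiting the stronger hypothesis $P_3(X)\geq 9$. First I would invoke Lemma \ref{lem:Pm1 bir} to reduce to the case $u_{3,0}\leq h^0(3K_F)-1$; since $F$ is a $(1,2)$-surface, $h^0(F,3K_F)=K_{F_0}^2+\chi(\OO_{F_0})+\binom{3}{2}K_{F_0}^2=\ldots$ can be computed explicitly from Riemann--Roch (it equals $7$), so the reduction gives $u_{3,0}\leq 6$. The overall strategy is then to peel off copies of $F$ from $M_3$: for each value $j\geq 0$ I consider the restriction dimensions $u_{3,-j}=\dim U_{3,-j}$ and compare $u_{3,-j}$ with $u_{3,-(j+1)}$, extracting a moving subsystem $|S_{3,-j}|$ with $M_3\geq jF+S_{3,-j}$ whenever the dimension drops. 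The invariant $P_3(X)=h^0(3K_{X'})$ forces, via $\sum_j(u_{3,-j}-u_{3,-(j+1)})$ being bounded below by $P_3-h^0(\text{base locus contributions})$, that some $u_{3,-j}$ is large enough to produce a useful $S_1=S_{3,-j}$.

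Next I would run the case analysis exactly as in Cases 1--3 of Proposition \ref{prop:P2}, but with the richer arsenal of Section 3. For each extracted $|S_{3,-j}|_F|$ I split on whether it is composed of the same pencil as $|G|$. In the non-composed case, $\tilde\delta=(C_1\cdot C)\geq 1$ and Theorem \ref{k2}(i) (together with \cite[Proposition 3.6]{CHP}) yields a birationality index; the point is that with $m_1=3$ and $j$ not too large, the quantity $\rounddown{\tfrac{1}{\xi}(2-\tfrac{\tilde\delta}{j})+\tfrac{m_1}{j}+\tfrac1\beta}+2$ drops to at most $6$ once $\xi\geq\tfrac23$ and $\beta\geq\tfrac12$ (Lemma \ref{lem:beta}). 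In the composed-pencil case I get a multiplicity estimate $S_{3,-j}|_F\geq kC$ forcing $\beta$ or $\mu$ upward, so that either Lemma \ref{lem:beta2} ($\beta>\tfrac23$) or Lemma \ref{lem:zeta} ($\mu>\tfrac43$) closes the argument, or else Theorem \ref{k2}(iv) bootstraps a lower bound on $\xi$ (typically $\xi\geq\tfrac45$, after which $\alpha(6)>2$ gives birationality by Theorem \ref{key-birat}). The hypothesis $P_3\geq 9$ is precisely what guarantees enough sections survive restriction so that at least one of these favorable subcases is reached.

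The main obstacle I anticipate is the bookkeeping that converts $P_3(X)\geq 9$ into a genuine lower bound on some $u_{3,-j}$, and then matching the arithmetic thresholds so that every resulting birationality index is $\leq 6$. Concretely, the delicate subcase is when $|M_3|_F|$ (i.e. $j=0$) is already generically finite but only yields $\xi$ close to the floor $\tfrac23$: there one must chase the degree estimate on $C$ carefully, as in Lemma \ref{lem:charac K1}, to rule out the borderline non-birational configuration, and the interplay between $\tilde\delta$, $j$, and the two regimes $\tilde\delta\leq 2j$ versus $\tilde\delta>2j$ of Theorem \ref{k2} must be tracked without slack. Unlike Proposition \ref{prop:P2}, here the larger plurigenus should leave no borderline case surviving, so I expect the conclusion to be unconditional birationality of $\varphi_{6,X}$ rather than an ``if and only if'' — the whole force of $P_3\geq 9$ is that it overshoots every threshold.
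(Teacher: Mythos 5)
Your overall strategy is exactly the paper's: set $m_1=3$, reduce via Lemma \ref{lem:Pm1 bir}, stratify by the restriction dimensions $u_{3,-j}$ (and $v_{3,-j}$), and in each stratum either extract a moving $S_{3,-j}$ and feed it to Theorem \ref{k2}/\ref{kf} and \cite[Propositions 3.5--3.7]{CHP}, or use $P_3\ge 9$ to force $h^0(M_3-2F)$ large, hence $\mu\ge\frac43$ (even $\frac53$), and close with Lemma \ref{lem:zeta}; the composed-pencil branches pushing $\beta$ up to invoke Lemma \ref{lem:beta2} are also as in the paper. However, two concrete points in your write-up fail. First, $h^0(F,3K_F)=\chi(\OO_{F_0})+\binom{3}{2}K_{F_0}^2=3+3=6$, not $7$ --- your Riemann--Roch expression carries a spurious extra $K_{F_0}^2$ --- so the reduction is to $u_{3,0}\le 5$, and the paper's case division starts precisely at $u_{3,0}=5$. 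Second, and more seriously, your claim that in the non-composed-pencil case Theorem \ref{k2}(i) already yields an index $\le 6$ ``once $\xi\ge\frac23$ and $\beta\ge\frac12$'' is false: with $m_1=3$, $j=2$, $\tilde\delta=2$, $\xi=\frac23$, $\beta=\frac12$ one gets $\rounddown{\frac32+\frac32+2}+2=7$. In essentially every stratum one must first improve at least one of $\beta$, $\mu$, $\xi$ --- typically bootstrapping $\xi\ge\frac45$ via Inequality \eqref{kieq1}, Theorem \ref{k2}(iv) or Theorem \ref{kf}(i.1) with a well-chosen $n$ (the paper even needs a two-step optimization $\xi\ge\frac57$ then $\xi\ge\frac45$ in its Subcase 2.2) --- before any of the birationality criteria lands at level $6$.

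Beyond these, the substance of the proposition is the case-by-case verification itself, which you only gesture at; the ``bookkeeping'' you defer is not routine, since each subcase requires a specific theorem with specific parameters and the thresholds are tight. Your anticipated delicate point is also misplaced: no borderline configuration in the style of Lemma \ref{lem:charac K1} arises here (that analysis belongs to Propositions \ref{prop:P2}, \ref{prop:P4}, \ref{prop:P5}, where an ``if and only if'' is proved); the genuinely delicate strata are the intermediate ones where neither $\mu$ nor $\beta$ improves and the $\xi$-bootstrap is mandatory, e.g.\ $u_{3,0}=4$, $u_{3,-1}\le 2$, $u_{3,-2}=2$ with $|S_{3,-2}|_F|$ composed with $|C|$. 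So the plan is the right one, but as written the quantitative backbone is incorrect in one place and absent in the rest; it is an outline of the paper's proof rather than a proof.
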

\begin{proof}
Set $m_1=3$. By Lemma \ref{lem:Pm1 bir}, we may assume that $u_{3,0}\le h^0(3K_F)-1=5$. Since $|3K_{X'}|\lsgeq |K_{X'}+F_1+F|$ for two distinct general fibers of $f$, it follows from \cite[Lemma 4.6]{MCNoe} that $$|M_3||_F\lsgeq |C|,$$ which means $u_{3,0}\ge 2$.
\medskip

{\bf Case 1.} $u_{3,0}=5$.

We have $h^0(M_3|_F-C)\ge 3$ since $v_{3,0}\leq 2$ by \cite[Proposition 3.4]{CHP}.

If $v_{3,-1}\ge 2$, we have $M_3|_F\ge C+C_{3,-1}$ where $C_{3,-1}$ is a moving curve on $F$ with $(C_{3,-1}\cdot C)\geq 2$. Hence $\varphi_{6,X}$ is birational by \cite[Proposition 3.7]{CHP}(i).

If $v_{3,-1}\le 1$, then $|M_3|_F-C|$ and $|C|$ are composed of the same pencil. Then $M_{3}|_F\ge 3C$ which means $\beta\ge 1$ and so $\varphi_{6,X}$ is birational by Lemma \ref{lem:beta2}.
\medskip

{\bf Case 2.}  $u_{3,0}=4$

We have $h^0(M_3|_F-C)\ge 2$.

If $v_{3,-1}\ge 2$, we get $M_3|_F\ge C+C_{3,-1}$ where $C_{3,-1}$ is a moving curve on $F$ with $(C_{3,-1}\cdot C)\geq 2$. Hence $\varphi_{6,X}$ is birational by \cite[Proposition 3.7]{CHP}(i).

If $v_{3,-1}\leq 1$, we see that $|M_3|_F-C|$ and $|C|$ are composed of the same pencil. Then $M_3|_F\ge 2C$ which implies $\beta\ge \frac{2}{3}$.
\medskip

\noindent{Subcase 2.1}. $u_{3,-1}\ge 3$.

If $|S_{3,-1}|_F|$ and $|C|$ are not composed of the same pencil,  then $(S_{3,-1}|_F\cdot C)\geq 2$ and so $\varphi_{6,X}$ is birational by \cite[Proposition 3.6]{CHP}(1.2) ($m_1=3$, $j=1$, $\beta\ge\frac{2}{3}$, $\mu=1$, $\tilde{\delta}=2$).

If $|S_{3,-1}|_F|$ and $|C|$ are  composed of the same pencil, then $S_{3,-1}|_F\ge 2C$. We get $\beta\ge\frac{3}{4}$ by  \cite[Proposition 3.5]{CHP} ($n_1=3$, $j_1=1$, $l_1=2$) and so $\varphi_{6,X}$ is birational by Lemma \ref{lem:beta2}.
\medskip

\noindent{Subcase 2.2}. $u_{3,-1}\le 2$ and $u_{3,-2}=2$.

If $|S_{3,-2}|_F|$ and $|C|$ are not composed of the same pencil, $\varphi_{6,X}$ is birational by Theorem \ref{k2}(i).

If $|S_{3,-2}|_F|$ and $|C|$ are composed of the same pencil, we hope to use Theorem \ref{kf}
with $j_1=2$ and $j_2=1$. Recall that $\beta\geq \frac{2}{3}$, $\xi\geq \frac{2}{3}$ and $\mu=1$. By taking $n=7$ and applying Inequality \eqref{kieq1}, we get $\xi\geq \frac{5}{7}$. Similarly, one gets $\xi\geq \frac{4}{5}$ by one more step optimization. {}Finally Theorem \ref{kf}(i) implies the birationality of $\varphi_{6,X}$.
\medskip

\noindent{Subcase 2.3}. $u_{3,-1}\le 2$, $u_{3,-2}=1$ and $P_3(X)\ge 9$.

One has $h^0(M_3-2F)\geq 3$.  As $u_{3,-2}=1$, one has  $M_3\ge 4F$, which implies
$\mu\geq \frac{4}{3}$. As $\alpha(5)>1$, we get $\xi\geq \frac{4}{5}$ by \eqref{kieq1}.
Since $\alpha(6)>2$, $\varphi_{6,X}$ is birational.
\medskip

{\bf Case 3.} $u_{3,0}\le 3$, $u_{3,-1}\le 3$ and $u_{3,-2}\ge 2$.

We have $M_3\ge 2F+S_{3,-2}$ for a moving divisor $S_{3,-2}$ with $h^0(S_{3,-2}|_F)\geq 2$. Clearly $\beta\geq \frac{2}{3}$.

If $|S_{3,-2}|_F|$ and $|C|$ are not composed of the same pencil, then $\xi\ge\frac{4}{5}$ by Theorem \ref{k2}(iv) ($n=4$). Hence $\varphi_{6,X}$ is birational by Theorem \ref{k2}(i).

If $|S_{3,-2}|_F|$ and $|C|$ are composed of the same pencil, we get $\xi\ge\frac{4}{5}$ by Theorem \ref{kf}(i.1) ($n=4$)  and $\varphi_{6,X}$ is birational by Theorem \ref{kf}(i.2).
\medskip

{\bf Case 4.} $u_{3,0}\le 3$, $u_{3,-1}=3$, $u_{3,-2}=1$ and $P_3(X)\ge 9$.

We have $h^0(M_3-2F)\geq 3$.  Since $u_{3,-2}=1$, we have $M_3\ge 4F$. Thus, by  \cite[Proposition 3.5]{CHP}, we have $\beta\geq \frac{4}{7}$.

If $|S_{3,-1}|_F|$ and $|C|$ are not composed of the same pencil, we have $(S_{3,-1}|_F\cdot C)\ge 2$. \cite[Proposition 3.6]{CHP}(1.2) implies the birationality of $\varphi_{6,X}$ ($m_1=3$, $j=1$, $\delta=2$, $\mu=\frac{4}{3}$).

If $|S_{3,-1}|_F|$ and $|C|$ are composed of the same pencil,  we have $\beta\geq \frac{3}{4}$ by  \cite[Proposition 3.5]{CHP}. Thus $\varphi_{6,X}$ is birational by Lemma \ref{lem:beta2}.
\medskip

{\bf Case 5.} $u_{3,0}\le 3$, $u_{3,-1}\le 2$, $u_{3,-2}=1$ and $P_3(X)\ge 9$.

Clearly we have $h^0(M_3-2F)\ge 4$, which implies that $\mu\geq \frac{5}{3}$. Hence $\varphi_{6,X}$ is birational by Lemma \ref{lem:zeta}.
\end{proof}

\begin{prop}\label{prop:P4}
Under the same condition as that of Lemma \ref{lem:beta}, then
\begin{enumerate}
\item when $P_4(X)\ge 15$,  $\varphi_{6,X}$ is birational;
\item when $P_4(X)=14$, $\varphi_{6,X}$ is non-birational if and only if one of the following holds:
\begin{enumerate}
\item[(2.1)] $\xi=\frac{4}{5}$;
\item[(2.2)] $\xi=1$ and $(\pi^*(K_X)|_F)^2=\frac{1}{2}$.
\end{enumerate}
\end{enumerate}
\end{prop}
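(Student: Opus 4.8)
The plan is to mirror the structure of the proof of Proposition \ref{prop:P2}, now with $m_1=4$, organizing the argument according to the values of the dimensions $u_{4,0}$, $u_{4,-1}$, $u_{4,-2}$, $u_{4,-3}$ of the successive restriction/adjunction spaces. By Lemma \ref{lem:Pm1 bir} I may assume $u_{4,0}\le h^0(4K_F)-1$; and as in Proposition \ref{prop:P3}, the inclusion $|M_4||_F\lsgeq|C|$ forces $u_{4,0}\ge 2$. The general strategy in each case is: whenever some $|S_{4,-j}|_F|$ is \emph{not} composed of the same pencil as $|C|$, apply Theorem \ref{k2}(i)/(ii) or \cite[Proposition 3.7]{CHP} to conclude birationality directly; whenever it \emph{is} composed of the same pencil, this produces a lower bound on $\beta$ or $\mu$ (via $S_{4,-j}|_F\ge kC$), which I then feed into the $\xi$-optimization machinery of \eqref{kieq1} together with Theorem \ref{kf}(i.1)/(i.2) to push $\xi$ up to $\frac{4}{5}$ (or higher), at which point either Theorem \ref{key-birat} or Lemma \ref{lem:charac K1}/Lemma \ref{lem:restric vol} settles the matter.

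First I would handle the high-dimensional cases establishing part (1): when $P_4(X)\ge 15$ there is always enough room either to make some restriction system big (giving $\beta=1$ via Lemma \ref{lem:Pm1 bir}(i) or Lemma \ref{lem:beta2}), or to force $\mu>\frac{4}{3}$ through $h^0(M_4-jF)$ being large (invoking Lemma \ref{lem:zeta}), or to distinguish two non-composed pencils and win by Theorem \ref{k2}. The bookkeeping here is that $\sum$ of the drops $u_{4,-j}-u_{4,-(j+1)}$ must account for $P_4(X)=h^0(M_4)$, so a large $P_4$ guarantees one of these favorable numerical configurations occurs.

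For part (2), with $P_4(X)=14$, I would run the same case analysis but track exactly when all escape routes fail. The key is that non-birationality forces a rigid situation: each relevant $|S_{4,-j}|_F|$ must be composed of the same pencil as $|C|$, and the optimizations must saturate rather than exceed the thresholds. Using \eqref{kieq1} repeatedly (as in Subcase 2.2 of Proposition \ref{prop:P3}) I expect to deduce $\xi\in\{\frac{4}{5},1\}$. In the $\xi=\frac{4}{5}$ case, Lemma \ref{lem:charac K1} already gives non-birationality, yielding (2.1). In the $\xi=1$ case, Lemma \ref{lem:restric vol} shows birationality holds once $(\pi^*(K_X)|_F)^2>\frac{1}{2}$, so non-birationality forces $(\pi^*(K_X)|_F)^2=\frac{1}{2}$, yielding (2.2); the converse direction, that $\xi=1$ together with $(\pi^*(K_X)|_F)^2=\frac{1}{2}$ indeed gives non-birationality, I would argue by the same explicit restriction-to-$C$ computation as in the final paragraph of Proposition \ref{prop:P2}, reducing $|M_6||_{C_2}$ to $|K_{C_2}+C_2|_{C_2}|$ on a curve where $|C_2|_{C_2}|$ is a $g^1_2$.

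The main obstacle I anticipate is the combinatorial explosion of cases: with four drops $u_{4,0},u_{4,-1},u_{4,-2},u_{4,-3}$ rather than the two in Proposition \ref{prop:P2}, the number of subcases grows substantially, and the delicate part is proving sharpness for $P_4(X)=14$ — namely verifying that in precisely the configurations surviving to non-birationality the $\xi$-optimization lands \emph{exactly} on $\frac{4}{5}$ or $1$ and no further inequality can be squeezed out to force birationality. Managing these interlocking numerical inequalities (especially keeping $\beta$, $\mu$ and $\xi$ bounds mutually consistent across the optimization steps) without accidentally over- or under-counting the available sections is where the real care is required.
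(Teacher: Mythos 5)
Your plan is essentially the paper's own proof: the same case analysis on $m_1=4$ indexed by $u_{4,0},u_{4,-1},u_{4,-2},u_{4,-3}$ (the paper also tracks $v_{4,0}\le 3$ and $v_{4,-1}\le 2$), the same dichotomy between $|S_{4,-j}|_F|$ being composed of the same pencil as $|C|$ or not, the same toolkit (Theorems \ref{k2}, \ref{k3}, \ref{kf}, the cited propositions of \cite{CHP}, Lemmas \ref{lem:beta2}, \ref{lem:zeta}, \ref{lem:restric vol}, \ref{lem:Pm1 bir}), and the same endgame for $P_4=14$ via Lemma \ref{lem:charac K1} for $\xi=\frac45$ and an explicit $g^1_2$ restriction computation for $\xi=1$, $(\pi^*(K_X)|_F)^2=\frac12$. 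What you have written is a correct roadmap rather than a proof — the entire substance of the proposition lies in the deferred case-by-case verification (which, as you anticipate, is where the paper spends all its effort) — but nothing in your outline would fail.
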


\begin{proof}
Set $m_1=4$. By Lemma \ref{lem:beta2}, we may and do assume that $u_{4,0}\leq h^0(4K_F)-1=8$.

By \cite[Proposition 3.4]{CHP}(1), we know that $v_{4,0}\leq 3$. We claim that  $v_{4,-1}=3$ is impossible. Otherwise, we have $M_4|_F\geq C+C_{-1}$, where $C_{-1}$ is a moving curve on $F$ satisfying $h^0(C, C_{-1}|_C)\geq 3$. In particular, one has $(C_{-1}\cdot C)\geq 4$ by Riemann-Roch formula. Now we have
$$4\geq (\sigma^*(K_{F_0})\cdot M_4|_F)\geq (\sigma^*(K_{F_0})\cdot (C+C_{-1}))\geq 5,$$
which is a contradiction.  Hence we have  $v_{4,-1}\leq 2$.

In the proof we will always apply a setting such that, for some integer $j\geq 0$,
$$M_4\geq jF+S_{4,-j}$$
for a moving divisor $S_{4,-j}$ with $h^0(F,S_{4,-j}|_F)\geq 2$. Modulo further birational modifications, we may and do assume that $|S_{4,-j}|$ is base point free.
\medskip

{\bf Case 1.}  $u_{4,0}\ge 7$.

Since $v_{4,0}\leq 3$ and $v_{4,-1}\leq 2$, we have $h^0(F, M_4|_F-2C)\geq 2$.  There is a moving curve $C_{-2}$ such that $M_4|_F\geq 2C+C_{-2}$. When $|C|$ and $|C_{-2}|$ are composed of the same pencil, we get $M_4|_F\geq 3C$ and so $\beta\geq \frac{3}{4}$. Lemma \ref{lem:beta2} implies the birationality of $\varphi_{6,X}$. When $|C|$ and $|C_{-2}|$ are not composed of the same pencil,  \cite[Proposition 3.7]{CHP}(iii)
implies that we have $\xi\ge\frac{4}{5}$  ($n=4$, $m_1=4$, $j=2$, $\delta_1=2$, $\xi\ge\frac{2}{3}$, $\mu=1$). Therefore $\varphi_{6,X}$ is birational by \cite[Proposition 3.7]{CHP}(i) ($m_1=4$, $j=2$, $\delta_1=2$, $\xi\ge\frac{4}{5}$, $\mu=1$).
\medskip

{\bf Case 2.} $u_{4,0}=6$.

The argument is organised according to the value of $v_{4,0}$.
\medskip

\noindent{Subcase 2.1}.  $u_{4,0}=6$ and $v_{4,0}\leq 2$.

We have $M_4|_F\ge 2C+C_{-2}$, where $h^0(C_{-2})=h^0(M_4|_F-2C)\ge 2$. If $|C_{-2}|$ and $|C|$ are composed of the same pencil, then we have $\beta\ge\frac{3}{4}$ and $\varphi_{6,X}$ is birational by Lemma \ref{lem:beta2}. If $|C_{-2}|$ and $|C|$ are not composed of the same pencil, we have $\xi\ge\frac{4}{5}$ by  \cite[Proposition 3.7]{CHP}(iii) ($n=4$, $m_1=4$, $j=2$, $\delta_1=2$, $\xi\ge\frac{2}{3}$, $\mu=1$). Then $\varphi_{6,X}$ is birational by \cite[Proposition 3.7]{CHP}(i) ($m_1=4$, $j=2$, $\delta_1=2$, $\xi\ge\frac{4}{5}$, $\mu=1$).
\medskip

\noindent{Subcase 2.2}. $u_{4,0}=6$, $u_{4,-1}\ge 4$ and  $v_{4,0}=3$.

Since $v_{4,0}=3$, we have $\xi=1$. Clearly we have
$h^0(F, S_{4,-1}|_F)\geq 4$ by assumption.

If $|S_{4,-1}|_F|$ and $|C|$ are not composed of the same pencil and $(S_{4,-1}|_F\cdot C)\ge 4$. We have $$(\pi^*(K_X)|_F)^2\ge\frac{1}{5}(\xi+\frac{1}{2}(S_{4,-1}|_F\cdot C))\ge\frac{3}{5}>\frac{1}{2}$$
by Theorem \ref{k2}(iii). So $\varphi_{6,X}$ is birational by Lemma \ref{lem:restric vol}.

If $|S_{4,-1}|_F|$ and $|C|$ are not composed of the same pencil and $(S_{4,-1}|_F\cdot C)\le 3$. Then we have $$S_{4,-1}|_F\ge C+C_{-1},$$ where $C_{-1}$ is a moving curve on $F$. If $|C_{-1}|$ and $|C|$ are not composed of the same pencil, noting that
$$(\pi^*(K_X)|_F\cdot S_{4,-1}|_F)\geq \xi+\frac{1}{2}(C\cdot C_{-1})\geq 2,$$
we still have $(\pi^*(K_X)|_F)^2\geq \frac{3}{5}$ by Theorem \ref{k2}(iii) and above inequality. Hence  $\varphi_{6,X}$ is birational by Lemma \ref{lem:restric vol}. If $|C_{-1}|$ and $|C|$ are  composed of the same pencil. We have $\beta\ge\frac{3}{5}$ by  \cite[Proposition 3.5]{CHP} ($n_1=4$, $j_1=1$, $l_1=2$). Since $$(\pi^*(K_X)|_F)^2\ge\beta\cdot\xi \ge \frac{3}{5},$$ $\varphi_{6,X}$ is birational by Lemma \ref{lem:restric vol}.

If $|S_{4,-1}|_F|$ and $|C|$ are composed of the same pencil. We have $S_{4,-1}|_F\ge 3C$.  By  \cite[Proposition 3.5]{CHP}, we get $\beta\ge\frac{4}{5}$ and so $(\pi^*(K_X)|_F)^2\geq \frac{4}{5}$. Hence  $\varphi_{6,X}$ is birational by Lemma \ref{lem:beta2}.
\medskip

\noindent{Subcase 2.3}. $u_{4,0}=6$, $u_{4,-1}\le 3$, $u_{4,-2}\le 3$, $u_{4,-3}\ge 2$ and $v_{4,0}=3$.

$v_{4,0}=3$ implies that $\xi=1$.

If $|S_{4,-3}|_F|$ and $|C|$ are not composed of the same pencil, we have
$(\pi^*(K_X)|_F)^2\ge\frac{4}{7}>\frac{1}{2}$ by Theorem \ref{k2}(iii) and $\varphi_{6,X}$ is birational by Lemma \ref{lem:restric vol}.

If $|S_{4,-3}|_F|$ and $|C|$ are composed of the same pencil, we have $\beta\ge\frac{4}{7}$ by  \cite[Proposition 3.5]{CHP}.
Similarly, one has $(\pi^*(K_X)|_F)^2\ge\frac{4}{7}>\frac{1}{2}$. Hence $\varphi_{6,X}$ is birational by Lemma \ref{lem:restric vol}.
\medskip

\noindent{Subcase 2.4}. $u_{4,0}=6$, $u_{4,-1}\le 3$, $u_{4,-2}\le 3$, $u_{4,-3}=1$, $v_{4,0}=3$ and $P_4(X)\ge 15$.

$v_{4,0}=3$ implies that $\xi=1$.

As $|M_4-3F|$ and $|F|$ are composed of the same pencil and $h^0(M_4-3F)\geq 3$, we have $\mu\geq \frac{5}{4}$ and so $\beta\ge\frac{5}{9}$ by  \cite[Proposition 3.5]{CHP}. Hence $\varphi_{6,X}$ is birational by Lemma \ref{lem:restric vol} since $(\pi^*(K_X)|_F)^2\ge\frac{5}{9}>\frac{1}{2}$.
\medskip





{\bf Case 3.} $u_{4,0}\le 5$ and $u_{4,-1}\geq 4$.

If $|S_{4,-1}|_F|$ and $|C|$ are composed of the same pencil, then $\beta \ge \frac{4}{5}$ by  \cite[Proposition 3.5]{CHP}. Hence $\varphi_{6,X}$ is birational by Lemma \ref{lem:beta2}.

Assume that $|S_{4,-1}|_F|$ and $|C|$ are not composed of the same pencil. For the case  $(S_{4,-1}|_F\cdot C)=4$, 
$\varphi_{6,X}$ is birational by Theorem \ref{k2}(ii) ($m_1=4$, $j=1$, $\tilde{\delta}=4$, $\beta=\frac{1}{2}$, $\mu=1$).
 For the case  $(S_{4,-1}|_F\cdot C)\le 3$, we have $S_{4,-1}|_F\ge C+C_{-1}$ where $C_{-1}$ is a moving curve. When $|C_{-1}|$ and $|C|$ are not composed of the same pencil, then
$\varphi_{6,X}$ is birational by Theorem \ref{k3}(i.2) ($m_1=4$, $j_1=j_2=1$, $\mu=1$, $\beta=\frac{1}{2}$, $\delta_2=2$).

When $|C_{-1}|$ and $|C|$ are composed of the same pencil, we have $\beta \geq \frac{3}{5}$ by  \cite[Proposition 3.5]{CHP}. Since $\alpha(7)>2$, we have $\xi\ge\frac{5}{7}$.
\medskip

 \noindent{Subcase 3.1.} $u_{4,0}\le 5$,  $u_{4,-1}=4$ and $u_{4,-2}=4$.

Since $|S_{4,-2}|_F|=|S_{4,-1}|_F|$, we have $S_{4,-2}|_F\ge 2C$. Hence $\varphi_{6,X}$ is birational by Theorem \ref{kf} (i.2)($\xi=\frac{5}{7}$, $m_1=4$, $j_1=j_2=2$, $\beta=\frac{3}{5}$).
\medskip

\noindent{Subcase 3.2.} $u_{4,0}\le 5$,  $u_{4,-1}=4$, $u_{4,-2}\le 3$ and $u_{4,-3}\ge 2$.

If $|S_{4,-3}|_F|$ and $|C|$ are not composed of the same pencil, $\varphi_{6,X}$ is birational by Theorem \ref{k2} (i) ($m_1=4$, $j=3$, $\xi=\frac{5}{7}$, $\tilde{\delta}=2$, $\beta=\frac{3}{5}$). If $|S_{4,-3}|_F|$ and $|C|$ are composed of the same pencil, we have $\xi\ge\frac{4}{5}$ by Theorem \ref{kf}(i.1)($n=4$, $m_1=4$, $j_1=3$, $j_2=1$, $\beta=\frac{3}{5}$, $\xi=\frac{5}{7}$).  Hence $\varphi_{6,X}$ is birational by Theorem \ref{kf}(i.2) ($m_1=4$, $j_1=3$, $j_2=1$, $\xi=\frac{4}{5}$, $\beta=\frac{3}{5}$).
\medskip

\noindent{Subcase 3.3.} $u_{4,0}\le 5$,  $u_{4,-1}=4$, $u_{4,-2}\le 3$, $u_{4,-3}=1$ and $P_{4}(X)\ge 15$.

We have $\zeta(4)\ge5$ by our assumption. Recall that we have $\beta\ge\frac{3}{5}$ in this case. We get $\xi\ge\frac{4}{5}$ since $\alpha(5)\ge\frac{46}{45}>1$. Hence $\varphi_{6,X}$ is birational as $\alpha(6)\ge\frac{152}{75}>2$.
\medskip

{\bf Case 4}.  $u_{4,0}\le 5$, $u_{4,-1}\leq 3$ and $u_{4,-3}\geq 3$.

If $|S_{4,-3}|_F|$ and $|C|$ are composed of the same pencil, we have $\beta\ge \frac{5}{7}>\frac{2}{3}$ by  \cite[Proposition 3.5]{CHP}. So $\varphi_{6,X}$ is birational by Lemma \ref{lem:beta2}.

Assume that $|S_{4,-3}|_F|$ and $|C|$ are not composed of the same pencil. When $(S_{4,-3}|_F\cdot C)\ge 3$,
we have $\xi \geq \frac{6}{7}$ by Theorem \ref{k2}(iii). Thus Theorem \ref{k2}(i) ($\beta=\frac{1}{2}$) implies the birationality of $\varphi_{6,X}$.   When $(S_{4,-3}|_F\cdot C)\le 2$, we have $S_{4,-3}|_F\ge C$ and so $\beta\ge\frac{4}{7}$ by  \cite[Proposition 3.5]{CHP}.  Also we have $\xi \geq \frac{5}{7}$ by Theorem \ref{k2}(iii) ($j=3$, $m_1=4$).
Hence $\varphi_{6,X}$ is birational by Theorem \ref{k2}(i) ($m_1=4$, $j=3$, $\tilde{\delta}=2$, $\xi=\frac{5}{7}$, $\beta=\frac{4}{7}$).
\medskip

{\bf Case 5.} $u_{4,0}\le 5$, $u_{4,-1}\le 3$, $u_{4,-3}\le 2$ and $u_{4,-4}\geq 2$.

If $|S_{4,-4}|_F|$ and $|C|$ are composed of the same pencil, we get $\beta\ge\frac{5}{8}$ by  \cite[Proposition 3.5]{CHP}.
By Theorem \ref{kf} (i.1), we have $\xi\geq \frac{4}{5}$ ($m_1=4$, $j_1=4$, $j_2=1$, $\beta=\frac{5}{8}$, $\xi=\frac{2}{3}$, $n=4$). Then $\varphi_{6,X}$ is birational by Theorem \ref{kf} (i.2) ($m_1=4$, $j_1=4$, $j_2=1$, $\beta=\frac{5}{8}$, $\xi=\frac{4}{5}$).

If $|S_{4,-4}|_F|$ and $|C|$ are not composed of the same pencil,  we have $\xi\geq \frac{3}{4}$ by Theorem \ref{k2}(iii). Furthermore one gets $\xi\geq \frac{7}{9}$ by Theorem \ref{k2}(iv) ($n=8$). {}Finally  $\varphi_{6,X}$ is birational by Theorem \ref{k2}(i) ($m_1=4$, $j=4$, $\beta=\frac{1}{2}$, $\xi=\frac{7}{9}$, $\tilde{\delta}=2$).
\medskip

{\bf Case 6.} $u_{4,0}\le 5$, $u_{4,-1}\le 3$, $u_{4,-3}\leq 2$, $u_{4,-4}=1$ and $P_4(X)\ge15$.

In any case, the assumption implies that $\mu\geq \frac{5}{4}$.

If $u_{4,0}\le 4$, we get $h^0(M_4-4F)\ge 3$.  One has $M_4\geq 6F$ which means $\mu\geq \frac{3}{2}>\frac{4}{3}$. Hence  $\varphi_{6,X}$ is birational by Lemma \ref{lem:zeta}.

If $u_{4,0}=5$, then either $(M_4|_F\cdot C)=4$ or $M_4|_F\ge C+C_{-1}$, where $C_{-1}$ is a moving curve with $h^0(C_{-1})\geq 3$. We have $\xi=1$ in first case and, since $\beta\geq \frac{5}{9}$ by \eqref{cri},
one has
 $(\pi^*(K_X)|_F)^2\geq \frac{5}{9}>\frac{1}{2}$.  Hence $\varphi_{6,X}$ is birational by Lemma \ref{lem:restric vol}. Now turn to the later case. When $|C_{-1}|$ and $|C|$ are composed of the same pencil, we have $\beta\geq \frac{3}{4}$ and $\varphi_{6,X}$ is birational by Lemma \ref{lem:beta2}. If $|C_{-1}|$ and $|C|$ are not composed of the same pencil, $\varphi_{6,X}$ is birational by \cite[Proposition 3.7]{CHP} ($j=1$, $\delta_1=2$, $\mu=\frac{5}{4}$).

Now suppose $P_4(X)=14$.

We first assume that $\varphi_{6,X}$ is not birational. By the arguments in {\bf Case 1}-{\bf Case 6}, one of the following holds.
\begin{enumerate}
\item[(a)] $u_{4,0}=6$, $u_{4,-1}\le 3$, $u_{4,-2}\le 3$, $u_{4,-3}=1$, $v_{4,0}=3$;
\item[(b)] $u_{4,0}\le 5$, $u_{4,-1}\le 3$, $u_{4,-2}\le 3$ and $u_{4,-3}= 2$;
\item[(c)] $u_{4,0}\le 5$, $u_{4,-1}\le 3$, $u_{4,-2}\le 3$ and $u_{4,-3}=1$.
\end{enumerate}
 For  (a), we have $\xi=1$ since $v_{4,0}=3$. Lemma \ref{lem:restric vol} implies that we have $(\pi^*(K_X)|_F)^2=\frac{1}{2}$.  Thus (2.2) holds.

For (b). If $|S_{4,-3}|_F|$ and $|C|$ are not composed of the same pencil, we have $(S_{4,-3}|_F\cdot C)\ge 2$. By Theorem \ref{k2} (iv), we have $\xi\ge\frac{4}{5}$ ($n=4$). Theorem \ref{k2} (i) implies that $\varphi_{6,X}$ is birational when $\xi>\frac{4}{5}$ ($\tilde{\delta}=2$, $j=3$, $\xi>\frac{4}{5}$, $\beta=\frac{1}{2}$). So $\xi=\frac{4}{5}$ holds.

If $|S_{4,-3}|_F|$ and $|C|$ are composed of the same pencil. By  \cite[Proposition 3.5]{CHP}, we have $\beta\ge\frac{4}{7}$. Since $$\alpha(7)>(7-1-1-\frac{1}{\beta})\cdot \xi \ge\frac{13}{6}>2,$$ we have $\xi\ge\frac{5}{7}$. By Theorem \ref{kf} (i.1), we have $\xi\ge\frac{4}{5}$ ($m_1=4$, $j_1=3$, $j_2=1$, $\beta=\frac{4}{7}$, $\xi=\frac{5}{7}$). Theorem \ref{kf} (i.2) implies that $\varphi_{6,X}$ is birational when $\xi>\frac{4}{5}$ ($j_1=3$, $j_2=1$, $\xi>\frac{4}{5}$, $\beta=\frac{4}{7}$). Thus we have $\xi=\frac{4}{5}$.

We are left to treat (c). We claim that $u_{4,0}=5$. Otherwise, one has $u_{4,0}\le 4$. Thus we have  $h^0(M_4-3F)\ge 4$. Then $\varphi_{6,X}$ is birational by Lemma \ref{lem:Pm1 bir} (ii) ($m_1=4$, $j=3$), which contradicts to our assumption. So we have $u_{4,0}=5$. By our assumption, we have $h^0(M_4-3F)\ge 3$. Thus one has $\mu\ge\frac{5}{4}$ and $\beta\ge\frac{5}{9}$ by  \cite[Proposition 3.5]{CHP}.

If $v_{4,0}\le 2$, one has $M_4|_F\ge C+C_{-1}$, where $C_{-1}$ is a moving curve satisfying $h^0(F, C_{-1})\ge 3$. If $|C_{-1}|$ and $|C|$ are composed of the same pencil, one gets $\beta\ge\frac{3}{4}$. Lemma \ref{lem:beta2} implies that $\varphi_{6,X}$ is birational, which contradicts to our assumption. Thus  we have $(C_{-1}\cdot C)\ge 2$. \cite[Proposition 3.7]{CHP} (i) and (ii) implies that $\varphi_{6,X}$ is birational ($m_1=4$, $j=1$, $\mu=\frac{5}{4}$, $\beta=\frac{5}{9}$), which is a contradiction. So we have $v_{4,0}=3$. In particular, one has $\xi=1$. Lemma \ref{lem:restric vol} implies that we have $(\pi^*(K_X)|_F)^2=\frac{1}{2}$.

Now we consider the other direction. Lemma \ref{lem:charac K1} implies that we\ only need to consider the case where $\xi=1$ and $(\pi^*(K_X)|_F)^2=\frac{1}{2}$. Observing that $\beta>\frac{1}{2}$ implies that $(\pi^*(K_X)|_F)^2>\frac{1}{2}$. Thus we get $\mu=1$ and $\beta=\frac{1}{2}$. By the argument in {\bf Case 1}-{\bf Case 6}, one of the following holds:
\begin{enumerate}
\item [(i)] $u_{4,0}=6$, $u_{4,-1}\le 3$, $u_{4,-2}\le 3$, $u_{4,-3}=1$ and $v_{4,0}=3$;
\item [(ii)] $u_{4,0}\le 5$, $u_{4,-1}\ge 4$, $S_{4,-1}|_F\ge C+C_{-1}$, where $C_{-1}$ is a moving curve. Moreover, $|C_{-1}|$ and $|C|$ are not composed of the same pencil;
\item [(iii)] $u_{4,0}\le 5$, $u_{4,-1}\le 3$, $u_{4,-3}=2$;
\item [(iv)] $u_{4,0}\le 5$, $u_{4,-1}\le 3$,  $u_{4,-3}=1$.
\end{enumerate}

We first consider (i). Our assumption gives $M_4|_F\ge C+C_{-1}$, where $C_{-1}$ is a moving curve on $F$  satisfying $h^0(F, C_{-1})\ge 3$. Since $q(F)=0$, $\xi=1$ and $(\pi^*(K_X)|_F)^2=\frac{1}{2}$, $|C_{-1}|$ is not composed of pencil and we have $(\pi^*(K_X)|_F\cdot C_{-1})=1$.  We may and do assume that $|C_{-1}|$ is base point free. Take a general member $C_{-1} \in |C_{-1}|$. One has $g(C_{-1})\ge 3$ and $|C|_{C_{-1}}|$ is $g^1_{2}$. By Kawamata-Viehweg vanishing theorem, we have
\begin{align*}
|6K_{X'}||_F\lsgeq |K_{X'}+M_4+F||_F\lsgeq |K_F+C+C_{-1}|.
\end{align*}
By Ramanujam vanishing theorem, one has $h^{1}(F, K_F+C)=0$. Thus we have $$|K_F+C+C_{-1}||_{C_{-1}}=|K_{C_{-1}}+C|_{C_{-1}}|.$$ So $$|M_6||_{C_{-1}}\lsgeq |K_{C_{-1}}+C|_{C_{-1}}|.$$ Since $$\mathrm{deg}(K_{C_{-1}}+C|_{C_{-1}})\ge 6 \ \ \ \ \text{and}\ \ \ \ (M_6\cdot C_{-1})\le (6\pi^*(K_X)|_F\cdot C_{-1})=6,$$ we have $$|M_6||_{C_{-1}}=|K_{C_{-1}}+C|_{C_{-1}}|.$$ Since $|C|_{C_{-1}}|$ is $g^1_{2}$, $\varphi_{6,X}$ is non-birational.

For (ii). By Theorem \ref{k2} (iii) ($j=1$), we have $$\pi^*(K_X)|_F\ge \frac{1}{5}C+\frac{1}{5}S_{4,-1}|_F\ge \frac{2}{5} C+\frac{1}{5}C_{-1}.$$
Since $\xi=1$ and $\beta\ge\frac{1}{2}$, we have $$(\pi^*(K_X)|_F)^2\ge\frac{2}{5}+\frac{1}{5}\cdot \frac{1}{2}(C\cdot C_{-1})\ge\frac{3}{5},$$ which contradicts to our assumption.

For (iii).  If $|S_{4,-3}|_F|$ and $|C|$ are composed of the same pencil, we have $\beta\ge\frac{4}{7}$ by  \cite[Proposition 3.5]{CHP}, which contradicts to our assumption. Thus $|S_{4,-3}|_F|$ and $|C|$ are not composed of the same pencil. In particular, we have $(S_{4,-3}|_F\cdot C)\ge 2$. By Theorem \ref{k2} (iii) ($j=3$), we have $$\pi^*(K_X)|_F\ge\frac{3}{7}C+\frac{1}{7}S_{4,-3}|_F.$$ One can gets $(\pi^*(K_X)|_F)^2\ge\frac{4}{7}$, which is a contradiction.

We are left to treat (iv). By our assumption, we have  $\mu\ge\frac{5}{4}$, which is a contradiction.
\end{proof}

\begin{prop}\label{prop:P5}
Under the same condition as that of Lemma \ref{lem:beta}, then
\begin{enumerate}
\item when $P_5(X)\ge 24$, then $\varphi_{6,X}$ is birational;
\item when $22\le P_5(X)\le 23$, $\varphi_{6,X}$ is non-birational if and only if $\xi=\frac{4}{5}$.
 \end{enumerate}
\end{prop}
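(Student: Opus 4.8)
The plan is to run, with the choice $m_1=5$, the same inductive case-analysis machine used for Propositions \ref{prop:P2}, \ref{prop:P3} and \ref{prop:P4}. First I would assemble the numerical input for a $(1,2)$-surface: from $\chi(\OO_{F_0})=3$ and Riemann--Roch one gets $h^0(5K_F)=13$, so the reductions in Lemma \ref{lem:Pm1 bir} and Lemma \ref{lem:beta2} allow me to assume $u_{5,0}\le 12$, while \cite[Proposition 3.4]{CHP} bounds $v_{5,0}\le 4$ (and hence also $v_{5,-1}$) from the genus-$2$ geometry of $C$. The organizing identity is
$$P_5(X)=h^0(M_5)=\sum_{j\ge 0}u_{5,-j},$$
obtained by peeling off $F$ successively and using $F|_F\equiv 0$. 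Since $M_5\ge jF$ forces $\mu\ge\frac{j}{5}$, and $\mu>\frac43$ already yields birationality (Lemma \ref{lem:zeta}), in every non-birational situation the sum is finite with $u_{5,-j}=0$ for $j\ge 7$. Thus $P_5$ is pinched between the birationality criteria and the sizes of the $u_{5,-j}$.

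Next I would stratify by the tuple $(u_{5,0},u_{5,-1},\dots)$, refined by the $v_{5,\bullet}$, exactly as in Proposition \ref{prop:P4}. In each stratum I extract a base-point-free moving divisor $S_{5,-j}$ with $M_5\ge jF+S_{5,-j}$ and $h^0(F,S_{5,-j}|_F)\ge2$, and divide according to whether $|S_{5,-j}|_F|$ and $|G|$ are composed of the same pencil. When they are not, the intersection number $(S_{5,-j}|_F\cdot C)\ge2$ feeds Theorem \ref{k2} (items (i)--(iv)) or Theorem \ref{k3}(i.2), which typically produces either $(\pi^*(K_X)|_F)^2>\frac12$ (whence Lemma \ref{lem:restric vol} applies) or directly the birationality of $\varphi_{6,X}$. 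When they are composed of the same pencil, I first sharpen $\beta$ through \cite[Proposition 3.5]{CHP} and then sharpen $\xi$ via the recursive estimates \eqref{kieq1}, Theorem \ref{kf}(i.1) and Theorem \ref{k2}(iv); once $\xi$ is pushed beyond $\frac45$ (or $\beta$ beyond $\frac23$) birationality follows from Theorem \ref{kf}(i.2) or Lemma \ref{lem:beta2}. In the strata where the $u$-values are too small for these geometric arguments to bite, the hypothesis $P_5\ge 24$ of part (1) forces some $h^0(M_5-jF)$ to be large, hence $M_5\ge j'F$ with $\frac{j'}{5}>\frac43$, whence $\mu>\frac43$ and birationality by Lemma \ref{lem:zeta}.

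For part (2) I would revisit precisely the strata that survive the argument above under the weaker bound $22\le P_5\le 23$. The ``if'' direction is immediate, since $\xi=\frac45$ gives non-birationality by Lemma \ref{lem:charac K1}. For ``only if'', the surviving non-birational strata pin $\xi$ to one of the three critical values $\frac23$, $\frac45$ and $1$ (the last coupled with $(\pi^*(K_X)|_F)^2=\frac12$ through Lemma \ref{lem:restric vol}). The remaining point is that the two competing values are incompatible with $P_5\ge22$: in each of them the case analysis simultaneously pins the tuple $(u_{5,-j})_j$, and the resulting $\sum_j u_{5,-j}$ is capped strictly below $22$. This is consistent with $P_5=16$ for the $\xi=\frac23$ example $X_{16}\subset\bP(1,1,2,3,8)$ and with $P_5=20$ for the $\xi=1$, $(\pi^*(K_X)|_F)^2=\frac12$ example $X_{14}\subset\bP(1,1,2,2,7)$ (the latter also realizing $P_2=5$ and $P_4=14$, compatibly with Propositions \ref{prop:P2} and \ref{prop:P4}). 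Hence only $\xi=\frac45$ can occur, and conversely that value caps $P_5$ at $23$, in harmony with part (1).

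The hard part will be the bookkeeping in the middle step: with $m_1=5$ both the admissible range of $j$ and the bound on $u_{5,0}$ are larger than for $P_4$, so the number of sub-cases grows considerably, and in the ``same pencil'' branches one must chain several successive optimizations of $\xi$ and $\beta$ --- each legitimate only after the previous lower bound has been secured and the hypothesis $\alpha(m)>1$ behind \eqref{kieq1} has been verified --- before any birationality criterion applies. Keeping these inequalities sharp enough to separate the boundary value $\xi=\frac45$ from $\xi>\frac45$, and checking that the residual $\xi=\frac23$ and $\xi=1$ strata genuinely cap $\sum_j u_{5,-j}$ below $22$, is where the real effort lies.
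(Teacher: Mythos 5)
Your plan for part (1) is essentially the paper's proof: set $m_1=5$, reduce to $u_{5,0}\le 12$ and $v_{5,0}\le 4$ via Lemma \ref{lem:Pm1 bir} and \cite[Proposition 3.4]{CHP}, stratify by the tuple $(u_{5,-j})_j$, and in each stratum either force birationality through Theorems \ref{k2}, \ref{k3}, \ref{kf} and \cite[Propositions 3.5--3.7]{CHP}, or use $P_5\ge 24$ together with the identity $P_5=\sum_j u_{5,-j}$ to get $M_5\ge 7F$, hence $\mu\ge\frac{7}{5}>\frac{4}{3}$ and Lemma \ref{lem:zeta}. (One small inaccuracy: $v_{5,-1}\le 2$ is not a formal consequence of $v_{5,0}\le 4$; it needs its own argument, namely that $v_{5,-1}\ge 3$ gives $M_5|_F\ge C+C_{-1}$ with $(C_{-1}\cdot C)\ge 4$ and then \cite[Proposition 3.7]{CHP}(ii) makes $\varphi_{6,X}$ birational.)

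The gap is in your ``only if'' argument for part (2). You assert that the surviving non-birational strata pin $\xi$ to one of the three values $\frac{2}{3}$, $\frac{4}{5}$, $1$, and then propose to exclude $\frac{2}{3}$ and $1$ by showing that each of those strata caps $\sum_j u_{5,-j}$ strictly below $22$. Neither step is substantiated, and the first is not what the case analysis actually produces: the machine outputs \emph{lower bounds} on $\xi$ (e.g.\ $\xi\ge\frac{5}{7}$, $\xi\ge\frac{3}{4}$, $\xi\ge\frac{4}{5}$), not membership in a finite set, and an exact value only emerges when a lower bound $\xi\ge\frac{4}{5}$ is paired with a complementary statement ``$\xi>\frac{4}{5}$ implies $\varphi_{6,X}$ birational'' coming from Theorem \ref{kf}(i.2) or Theorem \ref{k3}(i.2) in that same stratum. (For $m_1=6$ the analogous analysis even leaves strata with only $\xi\ge\frac{6}{7}$, so the trichotomy you invoke is not a general feature.) Your proposed exclusion of $\xi=\frac{2}{3}$ and of $\xi=1$ with $(\pi^*(K_X)|_F)^2=\frac{1}{2}$ rests only on consistency with the two weighted-hypersurface examples; you would need to prove, independently, that these hypotheses force $P_5\le 21$, which is a nontrivial claim you do not address. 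The paper's route is different and avoids this: under $P_5\ge 22$ and non-birationality only four situations survive, and in each one it shows directly that $\xi\ge\frac{4}{5}$ (so $\frac{2}{3}$ never occurs) while $\xi>\frac{4}{5}$ would restore birationality (so $1$, and everything else above $\frac{4}{5}$, never occurs), leaving $\xi=\frac{4}{5}$; the remaining two situations are eliminated outright, one because $h^0(M_5-5F)\ge 3$ triggers Lemma \ref{lem:Pm1 bir}(ii). As written, your part (2) would not close without replacing the claimed trichotomy-plus-capping argument by stratum-by-stratum derivations of this kind.
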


\begin{proof}
Set $m_1=5$. By Lemma \ref{lem:Pm1 bir}, we may and do assume that $u_{5,0}\le h^0(5K_F)-1=12$.

 By \cite[Proposition 3.4]{CHP}, we know that $v_{5,0}\le 4$. When $\mathrm{dim}\psi_{5,0}(U_{5,0})=4$, we have $\mathrm{deg}(M_5|_C)=5$, which implies that $\xi=1$.  By Riemann-Roch formula, we have $h^0(C, M_5|_C)=4$. So $|M_5||_C$ is the complete linear system $|{M_5}|_C|=|K_C+D_1|$ with $\mathrm{deg}(D_1)=3$. Thus $\varphi_{5,X}$ is birational which implies that $\varphi_{6,X}$ is birational. So we may assume that $\mathrm{dim}\psi_{5,0}(U_{5,0})\le 3$.

Suppose $v_{5,-1}\ge 3$. Then $M_5|_F\ge C+C_{-1}$ for some moving divisor $C_{-1}$ satisfying $h^0(C, C_{-1}|_C)\ge 3$. By Riemann-Roch formula, we have $(C_{-1}\cdot C)\ge 4$. Then $\varphi_{6,X}$ is birational by \cite[Proposition 3.7]{CHP} (ii) ($\mu=1$, $m_1=5$, $j=1$, $\delta_1=4$, $\beta=\frac{1}{2}$).  From now on, we may assume that $v_{5,-1}\le 2$.
\medskip

{\bf Case 1.} $u_{5,0}\ge 8$.

Since $\mathrm{dim}\psi_{5,0}(U_{5,0})\le 3$, we have $h^0(M_5|_F-C)\ge 5$. Because $v_{5,-1}\le 2$, we have $M_5|_F\ge 2C+C_{-2}$, where $C_{-2}$ is a moving curve on $F$ satisfying $h^0(C_{-2})\ge 3$. If $|C_{-2}|$ and $|C|$ are composed of the same pencil, we have $\beta\ge\frac{4}{5}$ and $\varphi_{6,X}$ is birational by Lemma \ref{lem:beta2}.  If $|C_{-2}|$ and $|C|$ are not composed of the same pencil, we have $(C_{-2}\cdot C)\ge 2$. We have $$(C_{-2}\cdot C)\le (C_{-2}\cdot \sigma^*(K_{F_0}))\le ((M_5|_F-2C)\cdot \sigma^*(K_{F_0}))\le 3.$$ By \cite[Proposition 3.7]{CHP}(iii), we have $\xi\ge\frac{4}{5}$ ($n=4$, $\mu=1$, $m_1=5$, $j=2$, $\xi(1,|C|)=\frac{2}{3}$). Then $\varphi_{6,X}$ is birational by \cite[Proposition 3.7]{CHP} (i) ($\mu=1$, $m_1=5$, $j=2$, $\delta_1=2$, $\xi=\frac{4}{5}$).
\medskip

{\bf Case 2.} $u_{5,0}=7$.

If $\mathrm{dim}\psi_{5,0}(U_{5,0})\le 2$, we have $h^0(M_5|_F-C)\ge 5$. The same argument as in {Case 1} implies that $\varphi_{6,X}$ is birational. So we may assume that $\mathrm{dim} \psi_{5,0}(U_{5,0})=3$, which implies that $\xi\ge\frac{4}{5}$.

Since $v_{5,-1}\le 2$, we have $M_5|_F\ge 2C+C_{-2}$, where $C_{-2}$ is a moving curve satisfying $h^0(C_{-2})\ge 2$. If $|C_{-2}|$ and $|C|$ are not composed of the same pencil, we have $2\le(C_{-2}\cdot C)\le ((M_5|_F-2C)\cdot \sigma^*(K_{F_0}))\le 3$. \cite[Proposition 3.7]{CHP} (i) implies that $\varphi_{6,X}$ is birational ($\mu=1$, $m_1=5$, $j=2$, $\delta_1=2$, $\xi=\frac{4}{5}$).

 Suppose $|C_{-2}|$ and $|C|$ are composed of the same pencil. We get $\beta\ge\frac{3}{5}$ by our assumption. So we may assume that $\xi\ge\frac{4}{5}$ and $\beta\ge\frac{3}{5}$ in this case.
\medskip

Subcase 2.1. $u_{5,-1}\ge 6$.

If $\mathrm{dim} \psi_{5,0}(U_{5,-1})=3$, we have $(S_{5,-1}|_F\cdot C)\ge 4$. By Theorem \ref{KaE}, we have $$|4K_{X'}||_F\lsgeq |2(K_X'+F)||_F\lsgeq |2\sigma^*(K_{F_0})|.$$ \cite[Lemma 3.1]{CHP} implies that $M_4$ is a big divisor. So $S_{5,-1}$ is nef and big. Kawamata-Viehweg vanishing theorem yields $|6K_{X'}||_F\lsgeq |K_F+S_{5,-1}|_F|$. Thus $M_6|_F\ge C+S_{5,-1}|_F$. Then $\varphi_{6,X}$ is birational by \cite[Proposition 3.7]{CHP} (ii) ($\mu=1$, $m_1=5$, $j=1$, $\delta_1=4$, $\beta=\frac{3}{5}$, $\xi=\frac{4}{5}$).

If $\mathrm{dim} \psi_{5,0}(U_{5,-1})\le 2$, we have $S_{5,-1}|_F\ge 2C+C'$ where $C'$ is a moving divisor on $F$. If $|C'|$ and $|C|$ are not composed of the same pencil, we have $2\le (C'\cdot C)\le (5\pi^*(K_X)|_F-2C)\cdot \sigma^*(K_{F_0})\le 3$. Theorem \ref{k3} (ii) implies that $\varphi_{6,X}$ is birational ($\mu=1$, $m_1=5$, $j_1=1$, $j_2=2$, $\delta_2=2$, $\xi=\frac{4}{5}$). If $|C'|$ and $|C|$ are  composed of the same pencil, we have $S_{5,-1}|_F\ge 3C$. By Theorem \ref{kf} (ii.2), $\varphi_{6,X}$ is birational ($\mu=1$, $m_1=5$, $j_1=1$, $j_2=3$, $\xi=\frac{4}{5}$).
\medskip

Subcase 2.2. $u_{5,-1}\le 5$,  $u_{5,-2}=5$.

If $(S_{5,-2}|_F\cdot C)=5$, we have $\xi=1$. Since $\beta\ge\frac{3}{5}$, $\varphi_{6,X}$ is birational by Lemma \ref{lem:restric vol}. So we may assume $(S_{5,-2}|_F\cdot C)\le 4$.

If $(S_{5,-2}|_F\cdot C)=4$,  $\varphi_{6,X}$ is birational by Theorem \ref{k2} (i) ($m_1=5$, $j=2$, $\tilde{\delta}=4$, $\beta=\frac{3}{5}$, $\xi=\frac{4}{5}$).

We are left to treat the case when  $(S_{5,-1}|_F\cdot C)\le 3$. We have $S_{5,-2}|_F\ge C+C'$, where $C'$ is a moving curve satisfying $h^0(F, C')\ge 3$. If $|C'|$ and $|C|$ are composed of the same pencil, we have $\beta\ge\frac{5}{7}>\frac{2}{3}$ by  \cite[Proposition 3.5]{CHP}. By Lemma \ref{lem:beta2}, $\varphi_{6,X}$ is birational. If $|C'|$ and $|C|$ are not composed of the same pencil, $\varphi_{6,X}$ is birational by Theorem \ref{k3} (i.2) ($m_1=5$, $j_1=2$, $j_2=1$, $\delta_2=2$, $\beta=\frac{3}{5}$, $\xi=\frac{4}{5}$ ).
\medskip

Subcase 2.3. $u_{5,-1}\le 5$, $u_{5,-2}\le 4$ and $u_{5,-3}\ge 4$.

If $(S_{5,-3}|_F\cdot C)\ge 4$, $\varphi_{6,X}$ is birational by Theorem \ref{k2} (i) ($m_1=5$, $j=3$, $\tilde{\delta}=4$, $\beta=\frac{3}{5}$, $\xi=\frac{4}{5}$).

 If $(S_{5,-3}|_F\cdot C)\le 3$, we have $S_{5,-3}|_F\ge C+C'$, where $C'$ is a moving curve. If $|C'|$ and $|C|$ are composed of the same pencil, we have $\beta\ge\frac{5}{8}$ and $\varphi_{6,X}$ is birational by Theorem \ref{kf} (i.2) ($m_1=5$, $j_1=3$, $j_2=2$,  $\beta=\frac{5}{8}$, $\xi=\frac{4}{5}$). If $|C'|$ and $|C|$ are not composed of the same pencil, we have $(C'\cdot C)\ge 2$ and $\varphi_{6,X}$ is birational by Theorem \ref{k3} (i.2) ($m_1=5$, $j_1=3$, $j_2=1$, $\delta_2=2$, $\beta=\frac{3}{5}$, $\xi=\frac{4}{5}$).
\medskip

Subcase 2.4. $u_{5,-1}\le 5$, $u_{5,-2}\le 4$, $u_{5,-3}\le 3$ and  $u_{5,-4}\ge 3$.

If $|S_{5,-4}|_F|$ and $|C|$ are composed of the same pencil, we have $\beta \ge\frac{2}{3}$ by  \cite[Proposition 3.5]{CHP}. Then $\varphi_{6,X}$ is birational by Theorem \ref{kf} (i.2) ($m_1=5$, $j_1=4$, $j_2=2$, $\beta=\frac{2}{3}$, $\xi=\frac{4}{5}$).

If $|S_{5,-4}|_F|$ and $|C|$ are not composed of the same pencil, we have $2\le (S_{5,-4}|_F\cdot C)\le 5$. Then $\varphi_{6,X}$ is birational by Theorem \ref{k2} (i) ($m_1=5$, $j=4$, $\tilde{\delta}=2$, $\beta=\frac{3}{5}$, $\xi=\frac{4}{5}$).
\medskip

Subcase 2.5. $u_{5,-1}\le 5$, $u_{5,-2}\le 4$, $u_{5,-3}\le 3$, $u_{5,-4}\le 2$ and $u_{5,-5}\ge 2$.

If $|S_{5,-5}|_F|$ and $|C|$ are composed of the same pencil, $\varphi_{6,X}$ is birational by Theorem \ref{kf} (i.2) ($m_1=5$, $j_1=5$, $j_2=1$, $\beta=\frac{3}{5}$, $\xi=\frac{4}{5}$).

If $|S_{5,-5}|_F|$ and $|C|$ are not composed of the same pencil, $\varphi_{6,X}$ is birational by Theorem \ref{k2} (i) ($m_1=5$, $j=5$, $\tilde{\delta}=2$, $\beta=\frac{3}{5}$, $\xi=\frac{4}{5}$).
\medskip

Subcase 2.6. $u_{5,-1}\le 5$, $u_{5,-2}\le 4$, $u_{5,-3}\le 3$, $u_{5,-4}\le 2$, $u_{5,-5}=1$ and $P_5(X)\ge 24$.

Since $h^0(M_5-5F)\ge 3$, we have $\mu\ge\frac{7}{5}>\frac{4}{3}$.  Thus $\varphi_{6,X}$ is birational by Lemma \ref{lem:zeta}.
\medskip

{\bf Case 3.} $u_{5,0}\le 6$, $u_{5,-1}\le 6$, $u_{5,-2}\ge 5$.

 If $(S_{5,-2}|_F\cdot C)\ge 5$, $\varphi_{6,X}$ is birational by Theorem \ref{k2} (ii) ($m_1=5$, $j=2$, $\beta=\frac{1}{2}$, $\tilde{\delta}=5$).

 If $(S_{5,-2}|_F\cdot C)=4$, $\varphi_{6,X}$ is birational by Theorem \ref{k2} (i) ($m_1=5$, $j=2$, $\xi=\frac{4}{5}$, $\tilde{\delta}=4$, $\beta=\frac{1}{2}$).

 The remaining case is $(S_{5,-2}|_F\cdot C)\le 3$. By Riemann-Roch formula, we have $\mathrm{dim}\psi_{5,0}(U_{5,-2})\le 2$. So we have $S_{5,-2}|_F\ge 2C$ and $S_{5,-2}|_F\ge C+C'$, where $C'$ is a moving curve satisfying $h^0(F, C')\ge 3$. The former implies $\beta\ge\frac{4}{7}$ by  \cite[Proposition 3.5]{CHP}. If $|C'|$ and $|C|$ are composed of the same pencil, we have $\beta\ge\frac{5}{7}>\frac{2}{3}$. Lemma \ref{lem:beta2} implies that $\varphi_{6,X}$ is birational. If $|C'|$ and $|C|$ are not composed of the same pencil, we have $$2\le (C'\cdot C)\le (C'\cdot \sigma^*(K_{F_0}))\le ((S_{5,-2}|_F-C)\cdot \sigma^*(K_{F_0}))\le 4.$$ By Theorem \ref{k3} (i.2), $\varphi_{6,X}$ is birational ($\xi=\frac{2}{3}$, $\delta_2=2$, $j_1=2$, $j_2=1$, $\beta=\frac{4}{7}$).
\medskip

{\bf Case 4.} $u_{5,0}\le 6$, $u_{5,-1}\le 6$, $u_{5,-2}\le 4$, $u_{5,-3}\ge 4$.

If $(S_{5,-3}|_F\cdot C)\ge 4$, $\varphi_{6,X}$ is birational by Theorem \ref{k2} (i) ($\xi=\frac{4}{5}$, $\tilde{\delta}=4$, $j=3$, $\beta=\frac{1}{2}$, $m_1=5$).
So we may assume  $(S_{5,-3}|_F\cdot C)\le 3$.
By Riemann-Roch formula, we have $S_{5,-3}|_F\ge C+C'$ where $C'$ is a moving curve on $F$.

If $|C'|$ and $|C|$ are composed of the same pencil, we have $\beta\ge\frac{5}{8}$ by  \cite[Proposition 3.5]{CHP}. We have $\alpha(7)\ge \frac{34}{15}>2$, which implies $\xi\ge\frac{5}{7}$. Since $\alpha(8)\ge \frac{22}{7}>3$, we have $\xi\ge\frac{3}{4}$. So $\alpha(5)\ge\frac{21}{20}>1$ and  $\xi\ge\frac{4}{5}$ follows. By Theorem \ref{kf} (i.2),  $\varphi_{6, X}$ is birational ($\xi=\frac{4}{5}$,  $m_1=5$, $j_1=3$, $j_2=2$, $\beta=\frac{5}{8}$).

If $|C'|$ and $|C|$ are not composed of the same pencil, we have $2\le (C'\cdot C)\le 4$. By Theorem \ref{k3} (i.1), we have $\xi\ge\frac{5}{7}$ ($n=6$, $\xi=\frac{2}{3}$, $\delta_2=2$, $j_1=3$, $j_2=1$, $m_1=5$, $\beta=\frac{1}{2}$). So  $\varphi_{6,X}$ is birational by Theorem \ref{k3}(i.2) ($\xi=\frac{5}{7}$, $\delta_2=2$, $j_1=3$, $j_2=1$, $m_1=5$, $\beta=\frac{1}{2}$).
\medskip

{\bf Case 5.} $u_{5,0}\le 6$, $u_{5,-1}\le 6$, $u_{5,-2}\le 4$, $u_{5,-3}\le 3$, $u_{5,-4}\ge 3$.

If $(S_{5,-4}|_F\cdot C)\ge 4$, we have $\xi\ge\frac{4}{5}$. By Theorem \ref{k2} (i), $\varphi_{6,X}$ is birational ($\tilde{\delta}=4$, $\xi=\frac{4}{5}$, $j=4$, $m_1=5$, $\beta=\frac{1}{2}$). We may assume that $(S_{5,-4}|_F\cdot C)\le 3$. By Riemann-Roch formula, we have $S_{5,-4}|_F\ge C$. We get $\beta\ge\frac{5}{9}$ by  \cite[Proposition 3.5]{CHP}.

If $|S_{5,-4}|_F|$ and $|C|$ are composed of the same pencil, we have $\beta\ge\frac{2}{3}$ by  \cite[Proposition 3.5]{CHP}. By Theorem \ref{kf} (i.1), we have $\xi\ge\frac{4}{5}$ ($n=4$, $\xi=\frac{2}{3}$, $j_1=4$, $m_1=5$, $j_2=2$, $\beta=\frac{2}{3}$ ). Then $\varphi_{6,X}$ is birational by Theorem \ref{kf} (i.2) ($\xi=\frac{4}{5}$, $j_1=4$, $m_1=5$, $j_2=2$, $\beta=\frac{2}{3}$).

If $|S_{5,-4}|_F|$ and $|C|$ are not composed of the same pencil, we have $(S_{5,-4}|_F\cdot C)\ge 2$. By Theorem \ref{k2} (iv), we have $\xi\ge\frac{4}{5}$ ($n=4$, $m_1=5$, $j=4$, $\beta=\frac{5}{9}$, $\tilde{\delta}=2$). Hence $\varphi_{6,X}$ is birational by Theorem \ref{k2} (i) ($\xi=\frac{4}{5}$, $\tilde{\delta}=2$, $j=4$, $m_1=5$, $\beta=\frac{5}{9}$).
\medskip

{\bf Case 6.} $u_{5,0}\le 6$, $u_{5,-1}\le 6$, $u_{5,-2}\le 4$, $u_{5,-3}\le 3$, $u_{5,-4}\le 2$, $u_{5,-5}\ge 2$.

If $|S_{5,-5}|_F|$ and $|C|$ are composed of the same pencil, we have $\beta\ge\frac{3}{5}$ by  \cite[Proposition 3.5]{CHP}. By Theorem \ref{kf} (i.1), we have $\xi\ge\frac{4}{5}$ ($n=4$, $m_1=5$, $j_1=5$, $j_2=1$, $\beta=\frac{3}{5}$, $\xi=\frac{2}{3}$). By Theorem \ref{kf} (i.2), $\varphi_{6,X}$ is birational ($\xi=\frac{4}{5}$, $m_1=5$, $j_1=5$, $j_2=1$, $\beta=\frac{3}{5}$).

If $|S_{5,-5}|_F|$ and $|C|$ are not composed of the same pencil, we have $(S_{5,-5}|_F\cdot C)\ge 2$. By Theorem \ref{k2} (iv) , we have $\xi\ge\frac{4}{5}$. By Theorem \ref{k2} (i)$'$, $\varphi_{6,X}$ is birational ($m_1=5$, $j=5$, $\tilde{\delta}=2$, $\xi=\frac{4}{5}$).
\medskip

{\bf Case 7.}  $u_{5,0}\le 6$, $u_{5,-1}\le 6$, $u_{5,-2}\le 4$, $u_{5,-3}\le 3$, $u_{5,-4}\le 2$, $u_{5,-5}\le 1$ and $P_5(X)\ge 24$.

We have $h^0(M_5-5F)\ge 3$ by our assumption. Since $u_{5,-5}=1$, we have $\mu\ge\frac{7}{5}>\frac{4}{3}$ by  \cite[Proposition 3.5]{CHP}. Then $\varphi_{6,X}$ is birational by Lemma \ref{lem:zeta}.

Now we prove the second statement. Assume that $22\le P_5(X)\le 23$.

By Lemma \ref{lem:charac K1}, it suffices to consider the direction by assuming that $\varphi_{6,X}$ is not birational.  

 By the arguments in {\bf Case 1}$\sim${\bf Case 7}, it suffices to consider one of the following situations:
\begin{enumerate}
\item[(i)] $u_{5,-2}=4$;
\item[(ii)] $u_{5,0}=7$, $u_{5,-1}\le 5$, $u_{5,-2}\le 3$, $u_{5,-3}\le 3$, $u_{5,-4}\le 2$, $\mathrm{dim}\psi_{5,0}(U_{5,0})=3$, $\xi\ge\frac{4}{5}$, $\beta\ge\frac{3}{5}$;
\item[(iii)] $u_{5,0}\le 6$, $u_{5,-1}=6$, $u_{5,-2}\le 3$, $u_{5,-4}\le 2$, $u_{5,-5}=1$;
\item[(iv)] $u_{5,0}\le 6$, $u_{5,-1}\le 5$, $u_{5,-2}\le 3$,  $u_{5,-4}\le 2$, $u_{5,-5}=1$.
\end{enumerate}

We first consider (i). If $(S_{5,-2}|_F\cdot C)\ge 4$,  $\varphi_{6,X}$ is birational by Theorem \ref{k2} (i) and (ii) ($\tilde{\delta}\ge 4$, $m_1=5$, $j=2$, $\beta=\frac{1}{2}$), which contradicts to our assumption. So we have $(S_{5,-2}|_F\cdot C)\le 3$.
Then we have $S_{5,-2}|_F\geq C+C'$ for a moving curve $C'$ on $F$.   When $|C|$ and $|C'|$ are not composed of the same pencil, Theorem \ref{k3} (i.1) implies $\xi\geq \frac{4}{5}$ ($n=4$, $m_1=5$, $j_1=2$,  $j_2=1$, $\delta_2=2$, $\beta=\frac{1}{2}$).
Then $\varphi_{6,X}$ is birational by Theorem \ref{k3} (i.2), a contradiction. Otherwise, we have $S_{5,-2}|_F\ge 2C$.
Thus $\beta\ge\frac{4}{7}$ by  \cite[Proposition 3.5]{CHP}. Since $\alpha(7)\ge\frac{13}{6}>2$, we have $\xi\ge\frac{5}{7}$. By Theorem \ref{kf} (i.1), we have $\xi\ge\frac{4}{5}$ ($n=4$, $m_1=5$, $j_1=2$, $j_2=2$, $\beta=\frac{4}{7}$, $\xi\ge\frac{5}{7}$).  When $\xi>\frac{4}{5}$,  by Theorem \ref{kf} (i.2), $\varphi_{6,X}$ is birational ($m_1=5$, $j_1=2$, $j_2=2$), which is a contradiction. So the only possibility is $\xi=\frac{4}{5}$.

For (ii),  the condition $P_5(X)\ge 22$ and Lemma \ref{lem:Pm1 bir} (ii) imply that  $u_{5,-4}=2$.  If $|S_{5,-4}|_F|$ and $|C|$ are not composed of the same pencil, Theorem \ref{k2} (i) implies that $\varphi_{6,X}$ is birational ($m_1=5$, $j=4$, $\tilde{\delta}=2$, $\xi\geq \frac{4}{5}$, $\beta=\frac{3}{5}$), a contradiction.  If $|S_{5,-4}|_F|$ and $|C|$ are composed of the same pencil. When $\xi>\frac{4}{5}$, Theorem \ref{kf} (i.2) implies that $\varphi_{6,X}$ is birational ($m_1=5$, $j_1=4$, $j_2=1$, $\beta=\frac{3}{5}$). Thus the only possibility is $\xi=\frac{4}{5}$.

For (iii), we have $\mu\ge\frac{6}{5}$ and $\beta\ge\frac{6}{11}$ by our assumption. Since $\alpha(7)>2$, we have $\xi\ge\frac{5}{7}$. If $(S_{5,-1}\cdot C)\ge 4$, $\varphi_{6,X}$ is birational by Theorem \ref{k2} (ii) ($m_1=5$, $\tilde{\delta}=4$, $\mu\ge\frac{6}{5}$, $\beta\ge\frac{6}{11}$). So we may assume that $(S_{5,-1}|_F\cdot C)\le 3$. Thus we have $S_{5,-1}|_F\ge 2C+C_{-2}$, where $C_{-2}$ is a moving curve on $F$. If $|C_{-2}|$ and $|C|$ are not composed of the same pencil, $\varphi_{6,X}$ is birational by Theorem \ref{k3} (ii.2) ($m_1=5$, $j_1=1$, $j_2=2$, $\delta_2=2$, $\mu=\frac{6}{5}$). If $|C_{-2}|$ and $|C|$ are composed of the same pencil, we have $\beta\ge\frac{2}{3}$ by  \cite[Proposition 3.5]{CHP}. Since $\alpha(5)>1$, we have $\xi\ge\frac{4}{5}$. Since $\alpha(6)>2$, $\varphi_{6,X}$ is birational, which is a contradiction. Thus (iii) does not occur.

We are left to treat (iv). Since $h^0(M_5-5F)\ge 3$, Lemma \ref{lem:Pm1 bir} (ii) implies that $\varphi_{6,X}$ is birational, which is a contradiction.

Therefore we have $\xi=\frac{4}{5}$.\end{proof}

\begin{prop}\label{prop:P6}
Under the same assumption as that of Lemma \ref{lem:beta}, then
\begin{enumerate}
 \item when $P_6(X)\ge 35$, $\varphi_{6,X}$ is birational;
 \item when $32\le P_6(X)\le 34$, $\varphi_{6,X}$ is non-birational if and only if $\xi=\frac{4}{5}$.
 \end{enumerate}
\end{prop}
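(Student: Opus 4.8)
The plan is to set $m_1=6$ and run exactly the machine used for Proposition \ref{prop:P5}, only with a longer descending chain. First I would invoke Lemma \ref{lem:Pm1 bir} to reduce to $u_{6,0}\le h^0(6K_F)-1=17$, where $h^0(6K_{F_0})=\chi(\OO_{F_0})+15=18$ by Riemann--Roch on the $(1,2)$-surface $F_0$. By \cite[Proposition 3.4]{CHP} one has $v_{6,0}\le 5$, and since $\dim\psi_{6,0}(U_{6,0})\le v_{6,0}$, the extremal value $\dim\psi_{6,0}(U_{6,0})=5$ forces $\deg(M_6|_C)=6$ and $\xi\ge 1$; then $|M_6||_C$ is a complete linear system of degree $6>2g(C)+1$ on the genus-$2$ curve $C$, hence very ample, and together with Lemma \ref{lem: distin} this gives birationality of $\varphi_{6,X}$. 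So I may assume $\dim\psi_{6,0}(U_{6,0})\le 4$. Next, exactly as in the $m_1=5$ case, if $v_{6,-1}\ge 3$ then $M_6|_F\ge C+C_{-1}$ with $h^0(C,C_{-1}|_C)\ge 3$, whence $(C_{-1}\cdot C)\ge 4$ and \cite[Proposition 3.7]{CHP}(ii) yields birationality; thus I may also assume $v_{6,-1}\le 2$.

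The core of the argument is then a descending case analysis on the integers $u_{6,0}\ge u_{6,-1}\ge\cdots\ge u_{6,-6}$. In each branch I produce a base-point-free moving divisor $S_{6,-j}$ with $M_6\ge jF+S_{6,-j}$ and $h^0(S_{6,-j}|_F)\ge 2$, take a generic irreducible element of $|S_{6,-j}|_F|$, and split according to whether $|S_{6,-j}|_F|$ and $|C|$ are composed of the same pencil. When they are not, I apply the appropriate item of Theorems \ref{k2} and \ref{k3} (using \eqref{kieq1} to bootstrap a lower bound on $\xi$ first when needed); when they are, I apply Theorem \ref{kf} together with \cite[Proposition 3.5]{CHP} to bound $\beta$, and then Lemma \ref{lem:beta2}, Lemma \ref{lem:zeta}, or Lemma \ref{lem:restric vol}. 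The ``one more step optimization'' via \eqref{kieq1}, iterated as in Proposition \ref{prop:P5}, is what upgrades $\xi$ from $\tfrac{2}{3}$ or $\tfrac{5}{7}$ up to $\tfrac{4}{5}$ and beyond, at which point $\alpha(6)>2$ and Theorem \ref{key-birat} (through the cited technical theorems) closes the case. The terminal branches, where the chain collapses to $u_{6,-j}\le 1$, are precisely where the hypothesis on $P_6$ enters: a Hilbert-function count $P_6(X)=\sum_{j\ge 0}u_{6,-j}+h^0(M_6-NF)$ together with $P_6(X)\ge 35$ forces $h^0(M_6-jF)$ to be large for the relevant $j$, hence $M_6$ to contain enough copies of $F$ that $\mu>\tfrac{4}{3}$, so Lemma \ref{lem:zeta} applies (with \eqref{cri} and Lemma \ref{lem:restric vol} handling the borderline subcases).

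For the second statement I would proceed as in Proposition \ref{prop:P5}. The ``if'' direction is immediate from Lemma \ref{lem:charac K1}: $\xi=\tfrac{4}{5}$ already forces $\varphi_{6,X}$ non-birational. For the ``only if'' direction, assume $\varphi_{6,X}$ is non-birational with $32\le P_6(X)\le 34$ and run the same case analysis; every branch either yields birationality (a contradiction) or pins $\xi$ to the single surviving value $\tfrac{4}{5}$. Here the weaker bound $P_6(X)\ge 32$ is used to discard the low-$u$ tail cases that would otherwise force $\mu$ large and hence birationality, while $P_6(X)\le 34$ keeps us off the branches of part (1) that produce strict inequalities $\xi>\tfrac{4}{5}$.

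The main obstacle I anticipate is purely combinatorial bookkeeping: with $m_1=6$ the chain $u_{6,0},\dots,u_{6,-6}$ is one step longer than for $m_1=5$, so the number of subcases grows, and in each one the exact sequence of applications of \eqref{kieq1} and of the right items of Theorems \ref{k2}, \ref{k3}, \ref{kf} must be calibrated so that the final bound clears the threshold $\alpha(6)>2$. The genuinely delicate regime is $\xi$ near $\tfrac{4}{5}$: one must verify that each configuration either drives $\xi$ strictly above $\tfrac{4}{5}$ (birational) or forces $\xi=\tfrac{4}{5}$ exactly, while simultaneously tracking which cases need $P_6(X)\ge 35$ versus $P_6(X)\ge 32$ to eliminate the collapsing tails. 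This is where off-by-one miscounts in the Hilbert-function estimates are most likely, and it is the step that demands the most care.
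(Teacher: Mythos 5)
Your overall strategy is the paper's: set $m_1=6$, reduce via Lemma \ref{lem:Pm1 bir} to $u_{6,0}\le 17$, kill the extremal value $\dim\psi_{6,0}(U_{6,0})=5$ by completeness of $|M_6||_C$, then run a descending case analysis on $u_{6,0},\dots,u_{6,-6}$ using Theorems \ref{k2}, \ref{k3}, \ref{kf}, \cite[Propositions 3.5--3.7]{CHP} and the bootstrap \eqref{kieq1}, with $P_6(X)\ge 35$ entering only through the tail cases via $\mu>\tfrac{4}{3}$ and Lemma \ref{lem:zeta}, and with Lemma \ref{lem:charac K1} disposing of the ``if'' direction of (2). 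That is all consistent with the paper.

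However, there is a concrete gap in your very first reduction: you claim that $v_{6,-1}\ge 3$ already gives birationality ``exactly as in the $m_1=5$ case,'' hence that you may assume $v_{6,-1}\le 2$. This is false for $m_1=6$. The threshold in \cite[Proposition 3.7(ii)]{CHP} scales with $m_1/j$, so the configuration $j=1$, $\delta_1=4$, $\mu=1$, $\beta=\tfrac{1}{2}$ that is killed instantly when $m_1=5$ is \emph{not} killed when $m_1=6$: one needs $\beta>\tfrac{1}{2}$ for that application to clear the bound. Accordingly the paper only reduces to $v_{6,-1}\le 3$ (together with $v_{6,-2}\le 2$ and $v_{6,-3}\le 1$, a step you omit and use implicitly), and must carry the surviving branch $v_{6,-1}=3$, $(C_{-1}\cdot C)\ge 4$, $\beta=\tfrac{1}{2}$ through a genuinely new argument: an iterated application of \cite[Proposition 3.7(iii)]{CHP} with $n=9,10,11,12,13$ to force $\xi\ge\tfrac{6}{7}$, followed by a further sub-analysis ($u_{6,-1}=7$; $u_{6,-3}=4$; $u_{6,-5}=2$; $u_{6,-5}=1$) to reach contradictions. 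This branch is not removable bookkeeping --- it is exactly the configuration that survives to give the estimates $K_X^3\ge\tfrac{10}{21}$ in Proposition \ref{prop:esti of volume}, on which Theorem \ref{thm:main p_g2}(4) depends --- so your proposed shortcut would leave part of both statements (1) and (2) unproved. The rest of your outline is a plausible skeleton, but since the substance of this proposition lies entirely in the case-by-case numerics, and the one numerical claim you do make explicitly is wrong, the proposal as written does not constitute a proof.
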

\begin{proof}
Set $m_1=6$. By Lemma \ref{lem:Pm1 bir} (i), we may and do assume that $u_{6,0}\le P_6(F)-1=17$.

{\bf Reduction to}: $\mathrm{dim}\psi_{6,0}(U_{6,0})\le 4$, $v_{6,-1}\le 3$, $v_{6,-2}\le 2$ {\bf and} $v_{6,-3}\le 1$.

By \cite[Proposition 3.4]{CHP}, we have $v_{6,0}\le 5$. If $\mathrm{dim}\psi_{6,0}(U_{6,0})=5$,
the Riemann-Roch formula implies that $\mathrm{deg} (M_6|_C)\geq 6$. Noting that $\mathrm{deg} (M_6|_C)\leq 6$,  $|M_6||_C$ must be complete. So we can write $|M_6||_C=|K_C+D|$ where $\mathrm{deg} (D)=4$. Thus $\varphi_{6,X}$ is birational. Hence we may assume that $\mathrm{dim}\psi_{6,0}(U_{6,0})\le 4$.

Suppose $v_{6,-1}\ge 4$. Then $M_6|_F\ge C+C_{-1}$ for some moving curve $C_{-1}$ on $F$ satisfying $h^0(C, C_{-1}|_C)\ge 4$. In particular, we have $(C_{-1}\cdot C)\ge 5$. By \cite[Proposition 3.7]{CHP} (ii), $\varphi_{6,X}$ is birational ($\mu=1$, $m_1=6$, $\delta_1=5$, $\beta=\frac{1}{2}$, $j=1$). We may assume that $v_{6,-1}\le 3$.

Suppose $v_{6,-2}\ge 3$. We have $M_6|_F\ge 2C+C_{-2}$, where $C_{-2}$ is a moving curve satisfying $h^0(C, C_{-2}|_C)\ge 3$. By Riemann-Roch formula, one has $(C_{-2}\cdot C)\ge 4$. We also have $$(C_{-2}\cdot C)\le (C_{-2}\cdot \sigma^*(K_{F_0}))\le ((M_6|_F-2C)\cdot\sigma^*(K_{F_0}))\le 4.$$ So $(C_{-2}\cdot C)=4$. By \cite[Proposition 3.7]{CHP} (i), $\varphi_{6,X}$ is birational ($\xi=\frac{2}{3}$, $\delta_1=4$, $\mu=1$, $j=2$, $m_1=6$). Thus we may assume that $v_{6,-2}\le 2$.

Now assume that $v_{6,-3}\ge 2$. Then $M_6|_F\ge 3C+C_{-3}$ for some moving curve $C_{-3}$ on $F$. In particular, we have $(C_{-3}\cdot C)\ge 2$. By \cite[Proposition 3.7]{CHP} (iii), we have $\xi\ge\frac{4}{5}$ ($n=4$, $\mu=1$, $m_1=6$, $j=3$, $\xi=\frac{2}{3}$, $\delta_1=2$). Thus $\varphi_{6,X}$ is birational by \cite[Proposition 3.7]{CHP} (i) ($\xi=\frac{4}{5}$, $\delta_1=2$, $j=3$, $\mu=1$, $m_1=6$). So we may assume that $v_{6,-3}\le 1$.
\medskip

{\bf Case 1.} $u_{6,0}\ge 11$.

If $\mathrm{dim} \psi_{6,0}(U_{6,0})=4$, one has $(M_6|_F\cdot C)\ge 5$ by Riemann-Roch formula. Hence
$\xi\ge\frac{5}{6}$. By our assumption, we have $M_6|_F\ge 4C$. Thus we get $\beta\ge\frac{2}{3}$. Since $$\alpha(6)\ge (6-1-1-\frac{1}{\beta})\cdot\xi\ge\frac{25}{12}>2,$$ $\varphi_{6,X}$ is birational by Theorem \ref{key-birat}.

If $\mathrm{dim} \psi_{6,0}(U_{6,0})\le 3$, we get $M_6|_F\ge 5C$ by our assumption. In particular, we have  $\beta\ge\frac{5}{6}$. By Lemma \ref{lem:beta2}, $\varphi_{6,X}$ is birational.
\medskip

{\bf Case 2.} $u_{6,0}=10$ and $P_6(X)\ge 31$.

If $\mathrm{dim} \psi_{6,0}(U_{6,0})\leq 3$ and $v_{6,-1}=3$, we have $h^0(F,M_6|_F-C)\ge 7$ and $M_6|_F\ge C+C_{-1}$, where $(C_{-1}\cdot C)\ge 4$.  On the other hand, by our assumption ($u_{6,0}=10$, $v_{6,-2}\le 2$ and $v_{6,-3}\le 1$), we have $M_6|_F\ge 4C$. In particular, we have $\beta\ge\frac{2}{3}$. By \cite[Proposition 3.7]{CHP} (ii), $\varphi_{6,X}$ is birational ($\mu=1$, $m_1=6$, $\delta_1=4$, $j=1$, $\beta=\frac{2}{3}$).

If  $\mathrm{dim} \psi_{6,0}(U_{6,0})\le 3$ and $v_{6,-1}\le 2$, we have $h^0(F,M_6|_F-C)\ge 8$. By our assumption ($u_{6,0}=10$, $v_{6,-2}\le 2$ and $v_{6,-3}\le 1$), we have $M_6|_F\ge 5C$. In particular, we get $\beta\ge\frac{5}{6}$. Lemma \ref{lem:beta2} implies that $\varphi_{6,X}$ is birational.

So we may and do assume that $\mathrm{dim}\psi_{6,0}(U_{6,0})=4$ throughout this {\bf Case}.  By Riemann-Roch formula, one has $\deg({M_6}|_C)\geq 5$. When $\deg({M_6}|_C)=5$, then $|M_6||_C$ must be complete and clearly $\varphi_{6,X}$ is birational.
Thus we can assume, from now on within this case, that  $(M_6|_F\cdot C)=6$.  In particular, $\xi=1$.
\medskip

Subcase 2.1. $u_{6,-1}\ge 7$.

We first consider the case when $\mathrm{dim}\psi_{6,0}(U_{6,-1})=4$. By our assumption, we have $(S_{6,-1}|_F\cdot C)=(M_6|_F\cdot C)=6$.  \cite[Proposition 3.6]{CHP} (1.2) implies that $\varphi_{6,X}$ is birational ($\beta=\frac{1}{2}$, $m_1=6$, $\delta=6$, $\mu=1$).

So we may assume that $\mathrm{dim}\psi_{6,0}(U_{6,-1})\le 3$. Thus we have $S_{6,-1}|_F\ge C+C_{-1}$, where $C_{-1}$ is a moving curve on $F$ satisfying $h^0(F, C_{-1})\ge 4$. If $\mathrm{dim} \psi_{6,-1}(H^0(F, C_{-1}))\ge 3$, we have $(C_{-1}\cdot C)\ge 4$ by Riemann-Roch formula. By Theorem \ref{k3} (i.2), $\varphi_{6,X}$ is birational ($\xi=1$, $j_1=j_2=1$, $\delta_2=4$, $\mu=1$, $\beta=\frac{1}{2}$).  We are left to treat the case when $\mathrm{dim} \psi_{6,-1}(H^0(F, C_{-1}))\le 2$. We have $S_{6,-1}|_F\ge 2C+C_{-2}$ where $C_{-2}$ is a moving curve on $F$. If $|C_{-2}|$ and $|C|$ are composed of the same pencil, we get $\beta\ge\frac{4}{7}$ by  \cite[Proposition 3.5]{CHP}. Since $\xi=1$, we have $(\pi^*(K_X)|_F)^2\ge\frac{4}{7}$. Lemma \ref{lem:restric vol} implies that $\varphi_{6, X}$ is birational. If $|C_{-2}|$ and $|C|$ are not composed of the same pencil, $\varphi_{6,X}$ is birational by Theorem \ref{k3} (ii.2) ($m_1=6$, $j_1=1$, $j_2=2$, $\delta_2=2$, $\xi=1$, $\mu=1$) .
\medskip

Subcase 2.2. $u_{6,-1}\le 6$,  $u_{6,-3}\ge 4$.

If $\psi_{6,-3}(U_{6,-3})\ge 3$, we have $(S_{6,-3}|_F\cdot C)\ge 4$. Therefore $(\pi^*(K_X)|_F)^2\ge\frac{5}{9}$ by Theorem \ref{k2} (iii) ($j=3$, $m_1=6$, $\tilde{\delta}=4$).  Then $\varphi_{6,X}$ is birational by Lemma \ref{lem:restric vol}.

If $\psi_{6,-3}(U_{6,-3})\le 2$, we have $S_{6,-3}|_F\ge C+C'$ where $C'$ is a moving curve. Thus we still have $(\pi^*(K_X)|_F\cdot S_{6,-3}|_F)\ge 2$. By Theorem \ref{k2} (iii), we have $(\pi^*(K_X)|_F)^2\ge\frac{5}{9}$.  Hence $\varphi_{6,X}$ is birational by Lemma \ref{lem:restric vol}.
\medskip

Subcase 2.3. $u_{6,-1}\le 6$,  $u_{6,-3}\le 3$,  $u_{6,-5}\ge 2$.

If $|S_{6,-5}|_F|$ and $|C|$ are composed of the same pencil, we have $\beta\ge\frac{6}{11}$ by  \cite[Proposition 3.5]{CHP}. Thus $(\pi^*(K_X)|_F)^2\ge \frac{6}{11}$. Lemma \ref{lem:restric vol} implies that $\varphi_{6,X}$ is birational.

If $|S_{6,-5}|_F|$ and $|C|$ are not composed of the same pencil, we have $((\pi^*(K_X))|_F\cdot S_{6,-5}|_F)\ge 1$. By Theorem \ref{k2} (iii), we have $(\pi^*(K_X)|_F)^2\ge\frac{6}{11}$ ($j=5$). Lemma \ref{lem:restric vol} implies that $\varphi_{6,X}$ is birational.
\medskip

Subcase 2.4. $u_{6,-1}\le 6$,  $u_{6,-3}\le 3$, $u_{6,-5}=1$ and $P_6(X)\ge 31$.

We have $h^0(M_6-5F)\ge 3$. Since $u_{6,-5}=1$, we have $\beta\ge\frac{7}{13}$ by \eqref{cri}. So $(\pi^*(K_X)|_F)^2\ge\frac{7}{13}$ and $\varphi_{6,X}$ is birational by Lemma \ref{lem:restric vol}.
\medskip

{\bf Case 3.} $u_{6,0}\le 9$, $u_{6,-1}\ge 8$.

If $(S_{6,-1}|_F\cdot C)=6$, we have $\xi=1$. By \cite[Proposition 3.6]{CHP}(1.2) ($m_1=6$, $\beta=\frac{1}{2}$, $\mu=1$, $\delta=6$),  $\varphi_{6,X}$ is birational.

If $(S_{6,-1}|_F\cdot C)\le 5$ and $\mathrm{dim} \psi_{6,0}(U_{6,-1})\ge 4$, the Riemann-Roch formula on $C$ tells that $$|S_{6,-1}||_C=|S_{6,-1}|_C|=|K_C+D|,$$ where $\mathrm{deg}(D)=3$. Thus $\varphi_{6,X}$ is birational.

If $\mathrm{dim}(\psi_{6,0}(U_{6,-1}))\le 3$, we have $S_{6,-1}|_F\ge C+C_{-1}$ where $C_{-1}$ is a moving curve satisfying $h^0(F, C_{-1})\ge 5$. By our reduction, we have $\mathrm{dim} \psi_{6,-1}(H^0(F, C_{-1}))\le 3$.  If $\mathrm{dim} \psi_{6,-1}(H^0(F, C_{-1}))=3$, we have $(C_{-1}\cdot C)\ge 4$ by Riemann-Roch formula. By Theorem \ref{k3} (i.2), $\varphi_{6,X}$ is birational ($j_1=1$, $j_2=1$, $m_1=6$, $\mu=1$, $\beta=\frac{1}{2}$, $\delta_2=4$).  If $\mathrm{dim} \psi_{6,-1}(H^0(F, C_{-1}))\le 2$, we have $S_{6,-1}|_F\ge 2C+C_{-2}$, where $C_{-2}$ is a moving curve on $F$ satisfying $h^0(F, C_{-2})\ge 3$. If $|C_{-2}|$ and $|C|$ are not composed of the same pencil, we have $(C_{-2}\cdot C)\ge 2$. By Theorem \ref{k3} (ii.1), we have $\xi\ge\frac{4}{5}$ ($n=4$, $m_1=6$, $j_1=1$, $j_2=2$, $\delta_2=2$, $\xi=\frac{2}{3}$, $\mu=1$). So $\varphi_{6, X}$ is birational by Theorem \ref{k3} (ii.2) ($j_1=1$, $j_2=2$, $\delta_2=2$, $\mu=1$, $\xi=\frac{4}{5}$). If $|C_{-2}|$ and $|C|$ are composed of the same pencil, we have $S_{6,-1}|_F\ge 4C$. By  \cite[Proposition 3.5]{CHP}, we have $\beta\ge\frac{5}{7}$. Lemma \ref{lem:beta2} implies that $\varphi_{6,X}$ is birational.
\medskip

{\bf Case 4.} $u_{6,0}\le 9$, $u_{6,-1}\le 7$ and $u_{6,-2}\ge 7$.

Note that $S_{6,-2}|_F\ge M_4|_F\ge 2\sigma^*(K_{F_0})$. So $S_{6,-2}|_F$ is a big divisor.

If $(S_{6,-2}|_F\cdot C)\ge 5$, $\varphi_{6,X}$ is birational by Theorem \ref{k2} (ii)$'$ ($m_1=6$, $j=2$, $\beta=\frac{1}{2}$).

 If $(S_{6,-2}|_F\cdot C)\le 4$, we have $S_{6,-2}|_F\ge C+C_{-1}$, where $C_{-1}$ is a moving curve satisfying $h^0(F, C_{-1})\ge 4$. If $h^0(C, C_{-1}|_C)\ge 3$, we have $(C_{-1}\cdot C)=4$ by Riemann-Roch formula and our assumption $(S_{6,-2}|_F\cdot C)\le 4$. By Theorem \ref{k3} (i.2), $\varphi_{6,X}$ is birational ($j_1=2$, $j_2=1$, $\delta_2=4$, $m_1=6$, $\xi=\frac{2}{3}$, $\beta=\frac{1}{2}$, $\mu=1$ ) . We may assume that $h^0(C, C_{-1}|_C)\le 2$. Thus we have $S_{6,-2}|_F\ge 2C+C_{-2}$, where $C_{-2}$ is a moving curve on $F$. If $|C_{-2}|$ and $|C|$ are not composed of the same pencil, we have $(C_{-2}\cdot C)\ge 2$. By Theorem \ref{k3} (i.2), $\varphi_{6,X}$ is birational ($j_1=2$, $j_2=2$, $\xi=\frac{2}{3}$, $\delta_2=2$, $m_1=6$, $\beta=\frac{1}{2}$). If $|C_{-2}|$ and $|C|$ are composed of the same pencil, we have $S_{6,-2}|_F\ge 3C$. By Theorem \ref{kf} (ii.1), we have $\xi\ge\frac{4}{5}$ ($n=4$, $m_1=6$, $j_1=2$, $j_2=3$, $\mu=1$, $\xi=\frac{2}{3}$). Thus $\varphi_{6,X}$ is birational by Theorem \ref{kf} (ii.2) ($j_1=2$, $j_2=3$, $m_1=6$, $\mu=1$, $\xi=\frac{4}{5}$).
\medskip

{\bf Case 5.} $u_{6,0}\le 9$, $u_{6,-1}\le 7$, $u_{6,-2}\le 6$ and $u_{6,-3}\ge 5$.

If $(S_{6,-3}|_F\cdot C)\ge 4$, we have $\xi\ge\frac{7}{9}$ by Theorem \ref{k2} (iii) ($j=3$, $m_1=6$, $\tilde{\delta}=4$). By Theorem \ref{k2} (i) and (ii),  $\varphi_{6,X}$ is birational ($j=3$, $\tilde{\delta}\ge 4$, $\xi=\frac{7}{9}$, $m_1=6$, $\beta=\frac{1}{2}$).

If $(S_{6,-3}|_F\cdot C)\le 3$, we have $S_{6,-3}|_F\ge C+C_{-1}$, where $C_{-1}$ is a moving curve satisfying $h^0(F, C_{-1})\ge 3$. If $|C_{-1}|$ and $|C|$ are composed of the same pencil, we have $S_{6,-3}|_F\ge 3C$. By  \cite[Proposition 3.5]{CHP}, we have $\beta\ge\frac{2}{3}$. By Theorem \ref{kf} (i.1), we have $\xi\ge\frac{4}{5}$ ($n=4$, $m_1=6$, $j_1=j_2=3$, $\xi=\frac{2}{3}$, $\beta=\frac{2}{3}$).  Then $\varphi_{6,X}$ is birational by Theorem \ref{kf} (i.2) ($j_1=3$, $j_2=3$, $\xi=\frac{4}{5}$, $m_1=6$, $\beta=\frac{2}{3}$). If $|C_{-1}|$ and $|C|$ are not composed of the same pencil, we have $(C_{-1}\cdot C)\ge 2$. By Theorem \ref{k3} (i.2), $\varphi_{6,X}$ is birational ($j_1=3$, $j_2=2$, $\xi=\frac{2}{3}$, $m_1=6$, $\delta_2=2$, $\beta=\frac{1}{2}$).
\medskip

{\bf Case 6.} $u_{6,0}\le 9$, $u_{6,-1}\le 7$, $u_{6,-2}\le 6$, $u_{6,-3}\le 4$ and $u_{6,-4}\ge 4$.

If $(S_{6,-4}|_F\cdot C)\ge 4$, we get $\xi\ge\frac{4}{5}$ by Theorem \ref{k2} (iii) ($j=4$, $\tilde{\delta}=4$, $m_1=6$). Hence $\varphi_{6,X}$ is birational by Theorem \ref{k2} (i) ($\tilde{\delta}=4$, $j=4$, $\xi=\frac{4}{5}$, $\beta=\frac{1}{2}$).

If $(S_{6,-4}|_F\cdot C)\le 3$, we get $S_{6,-4}|_F\ge C+C_{-1}$, where $C_{-1}$ is a moving curve. If $|C_{-1}|$ and $|C|$ are composed of the same pencil, we have $S_{6,-4}|_F\ge 2C$. By  \cite[Proposition 3.5]{CHP}, we have $\beta\ge\frac{3}{5}$. By Theorem \ref{kf} (i.1), we have $\xi\ge\frac{4}{5}$ ($n=4$, $m_1=6$, $j_1=4$, $j_2=2$, $\xi=\frac{2}{3}$, $\beta=\frac{3}{5}$). Then $\varphi_{6,X}$ is birational by Theorem \ref{kf} (i.2) ($j_1=4$, $j_2=2$, $\xi=\frac{4}{5}$, $\beta=\frac{3}{5}$). If $|C_{-1}|$ and $|C|$ are not composed of the same pencil, we have $(C_{-1}\cdot C)\ge 2$. By Theorem \ref{k3} (i.1), we have $\xi\ge\frac{4}{5}$ ($n=4$, $m_1=6$, $j_1=4$, $j_2=1$, $\beta=\frac{1}{2}$, $\xi=\frac{2}{3}$, $\delta_2=2$).
Theorem \ref{k3} (i.2) implies that  $\varphi_{6,X}$ is birational ($j_1=4$, $j_2=1$, $m_1=6$, $\xi=\frac{4}{5}$, $\beta=\frac{1}{2}$).
\medskip

{\bf Case 7.} $u_{6,0}\le 9$, $u_{6,-1}\le 7$, $u_{6,-2}\le 6$, $u_{6,-3}\le 4$, $u_{6,-4}\le 3$ and $u_{6,-5}\ge 3$.

If $(S_{6,-5}|_F\cdot C)\ge 4$. By the same argument as in {\bf Case 6}, $\varphi_{6,X}$ is birational (Note that we have $S_{6,-4}|_F\ge S_{6,-5}|_F$).

If $2\le (S_{6,-5}|_F\cdot C)\le 3$, we have $S_{6,-5}|_F\ge C$ by Riemann-Roch formula. By  \cite[Proposition 3.5]{CHP}, we have $\beta\ge\frac{6}{11}$. By Theorem \ref{k2} (iv), we have $\xi\ge\frac{4}{5}$ ($n=4$, $\xi=\frac{2}{3}$, $m_1=6$, $j=5$, $\beta=\frac{6}{11}$, $\tilde{\delta}=2$). Then  $\varphi_{6,X}$ is birational by Theorem \ref{k2}(i) ($j=5$, $m_1=6$, $\tilde{\delta}=2$, $\xi=\frac{4}{5}$, $\beta=\frac{6}{11}$ ).

If $|S_{6,-5}|_F|$ and $|C|$ are composed of the same pencil, we get $S_{6,-5}|_F\ge 2C$.  By  \cite[Proposition 3.5]{CHP}, we have $\beta\ge\frac{7}{11}$. By Theorem \ref{kf} (i.1), we have $\xi\ge\frac{4}{5}$ ($n=4$, $m_1=6$, $j_1=5$, $j_2=2$, $\beta=\frac{7}{11}$, $\xi=\frac{2}{3}$). Then  $\varphi_{6,X}$ is birational by Theorem \ref{kf} (i.2) ($j_1=5$, $j_2=2$, $m_1=6$, $\xi=\frac{4}{5}$, $\beta=\frac{7}{11}$).
\medskip

{\bf Case 8.} $u_{6,0}\le 9$, $u_{6,-1}\le 7$, $u_{6,-2}\le 6$, $u_{6,-3}\le 4$, $u_{6,-4}\le 3$, $u_{6,-5}\le2$, $u_{6,-6}=2$.

If $|S_{6,-6}|_F|$ and $|C|$ are composed of the same pencil, we get $\beta\ge\frac{7}{12}$ by  \cite[Proposition 3.5]{CHP}. By Theorem \ref{kf} (i.1), we have $\xi\ge\frac{4}{5}$ ($n=4$, $m_1=6$, $j_1=6$, $j_2=1$, $\beta=\frac{7}{12}$, $\xi=\frac{2}{3}$).  Then $\varphi_{6,X}$ is birational by Theorem \ref{kf} (i.2) ($j_1=6$, $j_2=1$, $m_1=6$, $\xi=\frac{4}{5}$, $\beta=\frac{7}{12}$).

If $|S_{6,-6}|_F|$ and $|C|$ are not composed of the same pencil, we have $(S_{6,-6}|_F\cdot C)\ge 2$. By Theorem \ref{k2} (iv), we have $\xi\ge\frac{5}{7}$ ($n=6$, $m_1=6$, $j=6$, $\beta=\frac{1}{2}$, $\xi=\frac{2}{3}$). One has $\xi\ge\frac{4}{5}$ by Theorem \ref{k2} (iv) ($n=4$, $m_1=6$, $j=6$, $\beta=\frac{1}{2}$, $\xi=\frac{5}{7}$). Thus $\varphi_{6,X}$ is birational by Theorem \ref{k2} (i)$'$ ($\tilde{\delta}=2$, $j=6$, $\xi=\frac{4}{5}$, $m_1=6$, $\beta=\frac{1}{2}$) .
\medskip

{\bf Case 9.} $u_{6,0}\le 9$, $u_{6,-1}\le 7$, $u_{6,-2}\le 6$, $u_{6,-3}\le 4$, $u_{6,-4}\le 3$, $u_{6,-5}\le 2$, $u_{6,-6}=1$ and $P_6(X)\ge 35$.

We have $\mu\ge\frac{3}{2}>\frac{4}{3}$ by our assumption. So  $\varphi_{6,X}$ is birational by Lemma \ref{lem:zeta}.

Now suppose $32\le P_6(X)\le 34$. Lemma \ref{lem:charac K1} implies that we only need to consider the direction by assuming the non-birationality of $\varphi_{6,X}$.
By the arguments  in {\bf Case 1}-{\bf Case 9}, it suffices to consider one of the following cases: Case i $\sim$ Case iii.
\medskip

{\bf Case i.} $u_{6,0}=9$, $\mathrm{dim} \psi_{6,0}(U_{6,0})\le 4$.

 ($\dag$) We first treat the case when $\mathrm{dim} \psi_{6,0}(U_{6,0})=4$. We have $(M_6|_F\cdot C)\ge 5$ by Riemann-Roch formula.  If $(M_6|_F\cdot C)=5$, Rimann-Roch formula implies that $|M_6||_C$ is a complete linear system $|K_C+D|$, where $\mathrm{deg} D=3$. So $\varphi_{6,X}$ is birational, which is a contradiction. So we have $(M_6|_F\cdot C)=6$, which implies $\xi=1$. We will prove that this can not happen at all. By Lemma \ref{lem:restric vol}, we have $\beta=\frac{1}{2}$.
\medskip

Subcase i.a. $u_{6,-1}=7$, $\mathrm{dim} \psi_{6,0}(U_{6,0})=4$.

If $\dim \psi_{6,0}(U_{6,-1})\geq 4$,  we have $(S_{6,-1}|_F\cdot C)\geq 5$ by  Riemann-Roch formula. For the case $(S_{6,-1}|_F\cdot C)\geq 6$, $\varphi_{6,X}$ is birational by \cite[Proposition 3.6]{CHP} (1.2) ($m_1=6$, $\delta=6$, $\beta=\frac{1}{2}$, $\mu=1$).
For the case $(S_{6,-1}|_F\cdot C)=5$,  the linear system $|S_{6,-1}||_C$ must be the complete one, i.e. $|S_{6,-1}|_C|$, due to Riemann-Roch formula as well.  In fact, $|S_{6,-1}|_C|=|K_C+D|$ with $\deg(D)=3$. Clearly, $\varphi_{6,X}$ is birational.

If $\dim \psi_{6,0}(U_{6,-1})\leq 3$,
we have $S_{6,-1}|_F\ge C+C_{-1}^{'}$, where $C_{-1}^{'}$ is a moving curve on $F$ satisfying $h^0(F, C_{-1}^{'})\geq 4$.
When $(C_{-1}^{'}\cdot C)\geq 4$, then $\varphi_{6,X}$ is birational by Theorem \ref{k3} (i.2) ($m_1=6$, $j_1=j_2=1$, $\delta_2=4$, $\beta=\frac{1}{2}$, $\mu=1$), which is a contradiction. So we have $(C_{-1}^{'}\cdot C)\le 3$, which implies that  $S_{6,-1}|_F\ge 2C+C_{-2}^{'}$, where $C_{-2}^{'}$ is a moving curve on $F$. When $|C_{-2}^{'}|$ and $|C|$ are composed of the same pencil, we have $\beta\ge\frac{4}{7}$ by  \cite[Proposition 3.5]{CHP}.
Then we have $(\pi^*(K_X)|_F)^2\geq \frac{4}{7}>\frac{1}{2}$, which means $\varphi_{6,X}$ is birational by Lemma \ref{lem:restric vol} (a contradiction).

 When $|C_{-2}^{'}|$ and $|C|$ are not composed of the same pencil, we have $(C_{-2}^{'}\cdot C)\ge 2$. Theorem \ref{k3} (ii.2) implies that $\varphi_{6,X}$ is birational ($m_1=6$, $j_1=1$, $j_2=2$, $\delta_2=2$, $\xi=1$, $\mu=1$), which contradicts to our assumption.

In a word, Subcase i.a does not occur.
\medskip

Subcase i.b. $u_{6,-1}\le 6$, $u_{6,-3}\le 4$, $u_{6,-4}\le 3$, $u_{6,-5}\le 2$, $u_{6,-6}=1$ and $\mathrm{dim} \psi_{6,0}(U_{6,0})=4$.

Since $P_6(X)\ge 32$, we have $M_6\ge 7F$ by our assumption. By  Inequality \eqref{cri}, we have $\beta\ge\frac{7}{13}$ and $(\pi^*(K_X)|_F)^2>\frac{1}{2}$, which is a contradiction by Lemma \ref{lem:restric vol}. Hence this subcase does not occur either.
\medskip

($\ddag$) We then treat the case when $\mathrm{dim} \psi_{6,0}(U_{6,0})\le 3$. Since $$\alpha(6)\ge (6-1-1-\frac{1}{\beta})\cdot \xi>1,$$
$\varphi_{6,X}$ is generically finite, which
implies that $\mathrm{dim} \psi_{6,0}(U_{6,0})\ge 3$. We may and do assume that $\mathrm{dim} \psi_{6,0}(U_{6,0})=3$ throughout the rest of this case. We have $M_6|_F\ge C+C_{-1}$, where $C_{-1}$ is a moving curve on $F$ satisfying $h^0(F,C_{-1})\ge 6$.

If $(C_{-1}\cdot C)\le 3$, we have $C_{-1}\ge 2C+C_{-2}$, where $C_{-2}$ is a moving curve on $F$. If $|C_{-2}|$ and $|C|$ are not composed of the same pencil, we have $(C_{-2}\cdot C)\ge 2$. By \cite[Proposition 3.7]{CHP} (iii), we have $\xi\ge\frac{4}{5}$ ($m_1=6$, $j=3$, $\delta_1=2$, $\mu=1$, $\xi\ge\frac{2}{3}$, $n=4$). \cite[Proposition 3.7]{CHP} (i) implies that $\varphi_{6,X}$ is birational ($m_1=6$, $j=3$, $\delta_1=2$, $\mu=1$, $\xi\ge\frac{4}{5}$), which contradicts to our assumption. Thus $|C_{-2}|$ and $|C|$ are composed of the same pencil. We get $M_6|_F\ge 4C$. In particular, we have $\beta\ge\frac{2}{3}$. Since $\alpha(7)>2$, we have $\xi\ge\frac{5}{7}$. We have $\alpha(5)\ge \frac{15}{14}>1$. So $\xi\ge\frac{4}{5}$. We can get $\alpha(6)>2$ when $\xi>\frac{4}{5}$. Thus we have $\xi=\frac{4}{5}$.

If $(C_{-1}\cdot C)\geq 4$, \cite[Proposition 3.7]{CHP} (ii) implies that $\varphi_{6,X}$ is birational whenever $\beta>\frac{1}{2}$.  Thus we need to study the situation with  $\beta=\frac{1}{2}$. Taking $n=9$, $10$, $11$, $12$, $13$, respectively, and run \cite[Proposition 3.7]{CHP} (iii),  one finally gets $\xi\geq \frac{6}{7}$.  So we will work under the constraints:
$\xi\ge\frac{6}{7}$ and $\beta=\frac{1}{2}$, throughout the rest of this case.

\medskip

Subcase i.c. $u_{6,-1}=7$, $\mathrm{dim} \psi_{6,0}(U_{6,0}) \leq 3$.



Clearly, one has $\dim \psi_{6,0}(U_{6,-1}) \leq 3$ , which is parallel to the second part of Subcase i.a. We have $S_{6,-1}|_F\ge C+C_{-1}^{'}$, where $C_{-1}^{'}$ is a moving curve on $F$ satisfying $h^0(F, C_{-1}^{'})\geq 4$.
When $(C_{-1}^{'}\cdot C)\geq 4$, then $\varphi_{6,X}$ is birational by Theorem \ref{k3} (i.2) ($m_1=6$, $j_1=j_2=1$, $\delta_2=4$, $\beta=\frac{1}{2}$, $\mu=1$), which is a contradiction. So we have $(C_{-1}^{'}\cdot C)\le 3$, which implies that  $S_{6,-1}|_F\ge 2C+C_{-2}^{'}$, where $C_{-2}^{'}$ is a moving curve on $F$. When $|C_{-2}^{'}|$ and $|C|$ are composed of the same pencil, we have $\beta\ge\frac{4}{7}$ by  \cite[Proposition 3.5]{CHP}, a contradiction to our assumption $\beta=\frac{1}{2}$. When $|C_{-2}^{'}|$ and $|C|$ are not composed of the same pencil, we have $(C_{-2}^{'}\cdot C)\ge 2$. Theorem \ref{k3} (ii.2) implies that $\varphi_{6,X}$ is birational ($m_1=6$, $j_1=1$, $j_2=2$, $\delta_2=2$, $\xi=\frac{6}{7}$, $\mu=1$), which contradicts to our assumption.
\medskip

Subcase i.d. $u_{6,-1}\le 6$, $u_{6,-3}=4$.

If $(S_{6,-3}|_F\cdot C)\ge 4$,   Theorem \ref{k2} (i) implies that $\varphi_{6,X}$ is birational ($m_1=6$, $j=3$, $\beta=\frac{1}{2}$, $\tilde{\delta}=4$, $\xi=\frac{6}{7}$), which contradicts to our assumption.

If $(S_{6,-3}|_F\cdot C)\le 3$, we have $S_{6,-3}|_F\ge C+C'$, where $C'$ is a moving curve on $F$. When $|C'|$ and $|C|$ are composed of the same pencil, we have $\beta\ge\frac{5}{9}>\frac{1}{2}$ by  \cite[Proposition 3.5]{CHP}, which contradicts to assumption. Then $|C'|$ and $|C|$ are not composed of the same pencil, we get $(C'\cdot C)\ge 2$.  Then $\varphi_{6,X}$ is birational by Theorem \ref{k3} (i.2) ($m_1=6$, $j_1=3$, $j_2=1$, $\delta_2=2$, $\beta=\frac{1}{2}$, $\xi\ge\frac{6}{7}$), which contradicts to our assumption.
\medskip

Subcase i.e. $u_{6,-1}\le 6$, $u_{6,-3}\le 3$, $u_{6,-5}=2$.

The assumption $\beta=\frac{1}{2}$ implies that $|S_{6,-5}|_F|$ and $|C|$ are not composed of the same pencil. Then $(S_{6,-5}|_F\cdot C)\ge 2$. Theorem \ref{k2} (i)$'$ implies that $\varphi_{6,X}$ is birational ($m_1=6$, $j=5$, $\tilde{\delta}=2$, $\xi=\frac{6}{7}$), which contradicts to our assumption. Thus this subcase does not occur.
\medskip

Subcase i.f. $u_{6,-1}\le 6$, $u_{6,-3}\le 3$,  $u_{6,-5}=1$, $P_6(X)\ge 30$.

We have $h^0(M_6-5F)\ge 3$. Thus we have $\beta\ge\frac{7}{13}>\frac{1}{2}$ by Inequality \eqref{cri}, which contradicts to our assumption.
\medskip

{\bf Case ii.} $u_{6,0}\le 8$, $u_{6,-1}\le 7$, $u_{6,-2}=6$, $u_{6,-3}\le 4$, $u_{6,-4}\le 3$, $u_{6,-5}\le 2$, $u_{6,-6}=1$.

If $(S_{6,-2}|_F\cdot C)\le 3$, we have $S_{6,-2}|_F\ge 2C+C_{-2}$, where $C_{-2}$ is a moving curve on $F$. When $|C_{-2}|$ and $|C|$ are composed of the same pencil, we have $\xi\ge\frac{4}{5}$ by Theorem \ref{kf} (ii.1) ($n=4$, $m_1=6$, $j_1=2$, $j_2=3$, $\xi=\frac{2}{3}$, $\mu=1$). Thus $\varphi_{6,X}$ is birational by Theorem \ref{kf} (ii.2) ($m_1=6$, $j_1=2$, $j_2=3$, $\xi=\frac{4}{5}$, $\mu=1$), which is a contradiction. When $|C_{-2}|$ and $|C|$ are not composed of the same pencil, we have $(C_{-2}\cdot C)\ge 2$. By Theorem \ref{k3} (i.2), $\varphi_{6,X}$ is birational ($m_1=6$, $j_1=2$, $j_2=2$, $\xi=\frac{2}{3}$, $\delta_2=2$, $\beta=\frac{1}{2}$), which contradicts to our assumption.

Thus we have $(S_{6,-2}|_F\cdot C)\ge 4$. Theorem \ref{k2} (i) and (ii) imply that $\varphi_{6,X}$ is birational if $\beta>\frac{1}{2}$ ($m_1=6$, $j=2$, $\tilde{\delta}\ge 4$, $\xi=\frac{2}{3}$, $\mu=1$). So we have $\beta=\frac{1}{2}$ by our assumption. But our assumption in this case gives $h^0(M_6-6F)\ge 2$. Since $u_{6,-6}=1$, we have $\beta\ge\frac{7}{13}$, which is a contradiction.
\medskip

{\bf Case iii.} $u_{6,0}\le 8$, $u_{6,-1}\le 7$, $u_{6,-2}\le 5$, $u_{6,-3}\le 4$, $u_{6,-4}\le 3$, $u_{6,-5}\le 2$, $u_{6,-6}=1$.

We have $h^0(M_6-6F)\ge 3$ since $P_6(X)\ge 32$. Since $u_{6,-6}=1$, we have $\mu\ge\frac{4}{3}$. Then one gets $\beta\ge\frac{4}{7}$ by Inequality \eqref{cri}. We have $\alpha(7)\ge\frac{7}{3}>2$, so $\xi\ge\frac{5}{7}$. Since $\alpha(5)>1$, we have $\xi\ge\frac{4}{5}$. When $\xi>\frac{4}{5}$, we have $\alpha(6)>2$, which implies that $\varphi_{6,X}$ is birational. So the only possibility is $\xi=\frac{4}{5}$.
\end{proof}

\subsection{Estimation of the canonical volume}

We go on working under the same assumption as that of Lemma \ref{lem:beta}.


\begin{lem}\label{lem:estimation}
Let $\pi\colon X'\rightarrow X$ be any birational morphism where $X'$ is nonsingular and projective. Assume that $|M|$ is a base point free linear system on $X'$. Denote by $S$ a general member of $|M|$. Then the following inequality holds:
\begin{align}\label{eq:estimation1}
((\pi^*(K_X)|_S)^2)^2\ge K_X^3\cdot(\pi^*K_X|_S\cdot S|_S)
\end{align}
\end{lem}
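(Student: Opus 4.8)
The plan is to recognize the asserted inequality as an instance of the Khovanskii--Teissier inequality and to prove it by reduction to the Hodge index theorem on a surface. Write $L=\pi^*(K_X)$ and identify $S$ with the base point free (hence nef) divisor $M$: then $S$ is a nef Cartier divisor and $L$ is a nef and big $\bQ$-Cartier divisor on the smooth projective threefold $X'$, the bigness coming from the fact that $K_X$ is nef and big. Using the projection formula one has $L^3=K_X^3$, while by definition $(\pi^*(K_X)|_S)^2=(L^2\cdot S)$ and $(\pi^*(K_X)|_S\cdot S|_S)=(L\cdot S^2)$. Hence \eqref{eq:estimation1} is equivalent to
\begin{equation*}
(L^2\cdot S)^2\ge (L^3)(L\cdot S^2),
\end{equation*}
which is precisely the Khovanskii--Teissier inequality for the nef classes $L$ and $S$.

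To prove this directly, I would first reduce to the case where $L$ is ample. Fix an ample divisor $A$ on $X'$ and set $L_\varepsilon=L+\varepsilon A$ for a small positive rational $\varepsilon$; then $L_\varepsilon$ is ample, and since both sides of the inequality are polynomials in $\varepsilon$ (through the numerical class), it suffices to establish $(L_\varepsilon^2\cdot S)^2\ge (L_\varepsilon^3)(L_\varepsilon\cdot S^2)$ for every such $\varepsilon$ and then let $\varepsilon\to 0^+$.

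For $L_\varepsilon$ ample, choose $m$ large and divisible so that $mL_\varepsilon$ is very ample and integral, and let $D\in|mL_\varepsilon|$ be a general member, which is a smooth projective surface by Bertini. Since $L_\varepsilon$ is ample, the polarization $L_\varepsilon|_D$ satisfies $(L_\varepsilon|_D)^2=m(L_\varepsilon^3)>0$, so the Hodge index theorem on $D$ applied to the classes $L_\varepsilon|_D$ and $S|_D$ gives
\begin{equation*}
(L_\varepsilon|_D\cdot S|_D)^2\ge (L_\varepsilon|_D)^2\,(S|_D)^2.
\end{equation*}
Rewriting the three restricted intersection numbers on $X'$ as $m(L_\varepsilon^2\cdot S)$, $m(L_\varepsilon^3)$ and $m(L_\varepsilon\cdot S^2)$ respectively, and cancelling the common factor $m^2$, yields the desired inequality for $L_\varepsilon$.

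The only points requiring (routine) care are the Bertini smoothness of $D$ and the continuity of the intersection numbers as $\varepsilon\to 0$, neither of which presents a genuine obstacle. I expect the sole subtlety to be bookkeeping the restricted intersection numbers correctly and applying the Hodge index inequality with the ample polarization $L_\varepsilon|_D$ (for which the self-intersection is positive), rather than with $S|_D$, whose square need not be positive. Alternatively, one may simply invoke the Khovanskii--Teissier inequality for nef classes directly, bypassing the perturbation and Bertini steps altogether.
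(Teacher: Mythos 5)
Your proof is correct and follows essentially the same route as the paper: both reduce the inequality (which is indeed the Khovanskii--Teissier inequality for the nef classes $\pi^*(K_X)$ and $S$) to the Hodge index theorem on a smooth surface cut out by a large multiple of $\pi^*(K_X)$, with the restricted intersection numbers identified exactly as you compute them. The only difference is that the paper takes a general member of the base point free system $|\pi^*(mK_X)|$ directly (using that $K_X$ is nef and big on the minimal model, so the Hodge index polarization has positive self-intersection $mK_X^3>0$), whereas you perturb by an ample divisor and pass to the limit --- a slightly more robust but equivalent device.
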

\begin{proof}
Take a sufficiently large and divisible integer $m$ such that the linear system $|\pi^*(mK_X)|$ is base point free. Denote by $S_m$ a general member of $|\pi^*(mK_X)|$. By Bertini's theorem, $S_m$ is a smooth projective surface of general type. On the surface $S_m$, by Hodge index theorem, we have
\begin{align*}
(\pi^*(K_X)|_{S_m}\cdot S|_{S_m})^2\ge(\pi^*(K_X)|_{S_m})^2\cdot (S|_{S_m})^2,
\end{align*}
which implies \eqref{eq:estimation1}.
\end{proof} 

Let $m$ be a positive integer and $l$ be another integer satisfying $0\le l\le m$.
Assume that $h^0(M_m-lF)\geq 2$. Denote by $|S_{m,-l}|$ the moving part of $|M_m-lF|$. Modulo further blowups, we may assume that $|S_{m,-l}|$ is base point free. 
Multiplying the following inequality with $\pi^*(K_X)^2$:
$$M_m\geq jF+S_{m,-l}$$
while applying \eqref{eq:estimation1}, we have
\begin{align}\label{eq:estimation1.2}
K_{X}^3&\ge \frac{l(\pi^*(K_X)|_F)^2+\sqrt{K_X^3\cdot (\pi^*(K_X)|_{S_{m,-l}}\cdot {S_{m,-l}}|_{S_{m,-l}}) }}{m}\\ \nonumber
&\ge\frac{l(\pi^*(K_X)|_F)^2+\sqrt{(m-l)K_X^3\cdot(\pi^*(K_X)|_F\cdot {S_{m,-l}}|_F})}{m}.
\end{align}
For the last inequality, we note that $M_m\geq mF$, which implies $S_{m,-l}\geq (m-l)F$.

\begin{prop}\label{prop:esti of volume} Keep the same assumption as that of Lemma \ref{lem:beta}. Suppose that $\varphi_{6,X}$ is not birational, $\xi\neq\frac{2}{3}$ and $\xi\neq \frac{4}{5}$. Then the following holds:
\begin{enumerate}
\item $K_X^3\ge\frac{5}{14}$;
\item when $P_6(X)\ge 26$, $K_X^3\ge \frac{11}{28}$;
\item when $P_6(X)\ge 27$, $K_X^3>0.4328$;
\item when $P_6(X)\ge 28$, $K_X^3>0.4714$;
\item when $P_6(X)\ge 31$, $K_X^3\ge\frac{8}{15}$.
\end{enumerate}

\end{prop}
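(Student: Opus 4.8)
The plan is to first pin down $\xi$ and then feed a decomposition of $M_6$ into the volume estimate \eqref{eq:estimation1.2}. Since $F$ is a $(1,2)$-surface, $C$ is a genus $2$ curve and $\deg K_C=2$. Lemma \ref{lem:beta} gives $\xi\ge\frac23$, and the hypothesis $\xi\ne\frac23$ upgrades this to $\xi>\frac23$; applying \eqref{kieq1} with $m=7$ (here $\alpha(7)\ge 3\xi>2$ because $\mu\ge1$ and $\beta\ge\frac12$) yields $7\xi\ge\deg K_C+\lceil\alpha(7)\rceil\ge 2+3=5$, hence $\xi\ge\frac57$. One also has $\xi\le1$, for otherwise $\alpha(6)\ge2\xi>2$ and Theorem \ref{key-birat} (its separation hypotheses being supplied by Lemma \ref{lem: distin}) would make $\varphi_{6,X}$ birational. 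With these in hand \eqref{kcube} gives
\[
K_X^3\ \ge\ \mu\beta\xi\ \ge\ 1\cdot\tfrac12\cdot\tfrac57\ =\ \tfrac{5}{14},
\]
which is Item (1).

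For Items (2)--(5) the engine is \eqref{eq:estimation1.2} with $m=6$, fed by the two lower bounds $a:=(\pi^*(K_X)|_F)^2\ge\frac12\xi$ (from \eqref{eq:restriction}) and, in the non-birational regime that survives the arguments of Proposition \ref{prop:P6}, $(\pi^*(K_X)|_F\cdot S_{6,-l}|_F)\ge\xi$ (the relevant $S_{6,-l}|_F$ being composed with the genus $2$ pencil, so that $S_{6,-l}|_F\ge C$). The role of $P_6(X)$ is to measure how many copies of $F$ may be peeled off: from $P_6(X)=\sum_{k\ge0}u_{6,-k}$ and the level-by-level bounds produced in the proof of Proposition \ref{prop:P6} (for instance $u_{6,0}\le17$ together with $\dim\psi_{6,0}(U_{6,0})\le4$, $v_{6,-1}\le3$, $v_{6,-2}\le2$ and $v_{6,-3}\le1$), a larger lower bound on $P_6(X)$ forces $h^0(M_6-lF)\ge2$ for a correspondingly larger $l$. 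I would then run a short case analysis on the largest admissible $l$: substituting that $l$, together with $a\ge\frac12\xi$ and $\xi\ge\frac57$, into \eqref{eq:estimation1.2} produces a quadratic inequality in $K_X^3$ whose solution is the asserted bound. When $l$ is close to $6$, so that the square-root term degenerates, I would instead invoke $\mu\ge l/6$ directly in \eqref{kcube}; and in the tight sub-cases I would sharpen $\beta$ through \eqref{cri} or improve $\xi$ by one further round of \eqref{kieq1}.

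The arithmetic backbone is that the non-birationality of $\varphi_{6,X}$ severely limits $P_6(X)$: by Proposition \ref{prop:P6}, $P_6(X)\ge32$ already forces $\xi=\frac45$, which is excluded here, so in fact $P_6(X)\le31$ throughout, and the thresholds $26,27,28,31$ in Items (2)--(5) each carve out a narrow window in which exactly one new value of $l$ becomes available. The main obstacle, and the real content of the proof, is precisely this bookkeeping: converting each $P_6$-threshold into the exact $l$ for which $h^0(M_6-lF)\ge2$ is guaranteed, while keeping the worst-case lower bounds for $a$, $\xi$, $\beta$ and $(\pi^*(K_X)|_F\cdot S_{6,-l}|_F)$ honest in every sub-case---this is where the detailed dimension count inherited from Proposition \ref{prop:P6} is indispensable. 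Once $l$ is fixed, the remaining step is only the routine solution of the quadratic coming from \eqref{eq:estimation1.2}.
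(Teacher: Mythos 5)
Your Item (1) is correct and coincides with the paper's argument: $\alpha(7)\geq 3\xi>2$ gives $\xi\geq\frac{5}{7}$ via \eqref{kieq1}, and then \eqref{kcube} gives $K_X^3\geq\frac{5}{14}$. Your observation that Proposition \ref{prop:P6} forces $P_6(X)\leq 31$ under the stated hypotheses is also correct.

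For Items (2)--(5), however, there is a genuine gap: the two generic inputs you propose to feed into \eqref{eq:estimation1.2}, namely $(\pi^*(K_X)|_F)^2\geq\frac{1}{2}\xi$ and $(\pi^*(K_X)|_F\cdot S_{6,-l}|_F)\geq\xi$, are quantitatively too weak to produce the stated constants. For instance, with $\xi\geq\frac{5}{7}$, $a=\frac{5}{14}$ and $l=5$ the quadratic
\begin{equation*}
\Bigl(6K_X^3-\tfrac{25}{14}\Bigr)^2\ \geq\ \tfrac{5}{7}K_X^3
\end{equation*}
only yields $K_X^3\gtrsim 0.385$, well short of $0.4328$ or $0.4714$. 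The actual content of the paper's proof is a seven-case analysis (keyed to the values of $u_{6,0},u_{6,-1},u_{6,-2},u_{6,-3},u_{6,-5}$ and $\dim\psi_{6,0}(U_{6,0})$ inherited from Proposition \ref{prop:P6}) in which, case by case, one upgrades $\xi$ to $\frac{3}{4}$, $\frac{6}{7}$ or $1$, upgrades $\beta$ via \cite[Proposition 3.5]{CHP} and \eqref{cri}, and—crucially—obtains much stronger bounds such as $(\pi^*(K_X)|_F)^2\geq\frac{10}{21},\frac{5}{12},\frac{63}{140}$ and $(\pi^*(K_X)|_F\cdot S_{6,-1}|_F)\geq\frac{12}{5}$ by iterating Theorems \ref{k2}, \ref{k3}, \ref{kf} and \cite[Propositions 3.6, 3.7]{CHP}; only then does \eqref{eq:estimation1.2} deliver the claimed decimals. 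Your assumption that the relevant $|S_{6,-l}|_F|$ is composed with the genus-$2$ pencil (so that the intersection with $\pi^*(K_X)|_F$ is only $\geq\xi$) is also not what happens in the decisive cases: in Cases IVa, V and VI of the paper the restricted system is \emph{not} composed with $|C|$, and it is precisely the resulting larger intersection numbers that drive the bounds. So while your framework (peel off $l$ copies of $F$ according to $P_6$, then solve the quadratic from \eqref{eq:estimation1.2}) is the right skeleton, the bookkeeping you defer is not routine, and without it the stated constants cannot be reached.
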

\begin{proof} Since $\alpha(7)\ge (7-1-1-\frac{1}{\beta})\cdot \xi>2$, we have $\xi\ge\frac{5}{7}$.

Statement (1) follows from inequality \eqref{kcube} with $\beta\ge\frac{1}{2}$ and $\xi\ge\frac{5}{7}$.

By the arguments in {\bf Case 1}$\sim${\bf Case 9} in the proof of Proposition \ref{prop:P6}, one of the following cases holds:
\medskip

{\bf Case I.} $u_{6,0}=10$, $\mathrm{dim}\psi_{6,0}(U_{6,0})=4$ and $P_6(X)\le 30$.  ($\Rightarrow K_X^3\ge \frac{1}{2}$)

We have $(M_6|_F\cdot C)\ge 5$ by Riemann-Roch formula. If $(M_6|_F\cdot C)=5$, $|M_6||_C$ is a complete linear system whose general member has degree $5$, which implies that $\varphi_{6,X}$ is birational, which contradicts to our assumption. Thus we have $(M_6|_F\cdot C)=6$. We get $\xi=1$. Therefore we have $K_X^3\ge\frac{1}{2}$.
\medskip

{\bf Case IIa.} $u_{6,0}=9$, $\mathrm{dim}\psi_{6,0}(U_{6,0})=4$. ($\Rightarrow K_X^3\ge \frac{1}{2}$)

As we  have seen, one has $(M_6|_F\cdot C)\geq 5$ and $''=5''$ implies the birationality of $\varphi_{6,X}$. Hence $(M_6|_F\cdot C)=6$. In particular, $\xi=1$.
By Lemma \ref{lem:restric vol} and the assumption, we have $\beta=\frac{1}{2}$ and $(\pi^*(K_X)|_F)^2=\frac{1}{2}$. Thus $K_X^3\ge\frac{1}{2}$.
\smallskip

\noindent{\bf Claim}. We have $u_{6,-1}\le 6$, $u_{6,-3}\le 3$, $u_{6,-5}=1$, $P_6(X)\le 29$.

In fact, $u_{6,-1}\le 6$ follows from the proof of Proposition \ref{prop:P6} (see Subcase i.a.).

From the proof of Proposition \ref{prop:P6}, we have $u_{6,-3}\le 4$ and $u_{6,-5}\le 2$.

Suppose  $u_{6,-3}=4$. If $(S_{6,-3}|_F\cdot C)\ge 4$, by Theorem \ref{k2} (iii) ($m_1=6$, $j=3$), we have $$\pi^*(K_X)|_F\ge \frac{1}{3}C+\frac{1}{9}S_{6,-3}|_F.$$ Since $\xi=1$, we have $(\pi^*(K_X)|_F)^2\ge\frac{5}{9}>\frac{1}{2}$, which contradicts to our assumption. So $(S_{6,-3}|_F\cdot C)\le 3$, which gives $S_{6,-3}|_F\ge C+C_{-1}$, where $C_{-1}$ is a moving curve on $F$. Using the same argument as above, we can get $(\pi^*(K_X)|_F)^2\ge\frac{5}{9}>\frac{1}{2}$, which is a contradiction. So $u_{6,-3}\le 3$.

By the similar argument as above, we also sees that $u_{6,-5}=1$.

If $P_6(X)\ge 30$, we have $h^0(M_6-5F)\ge 3$. By Inequality \eqref{cri}, we have $\beta\ge\frac{7}{13}>\frac{1}{2}$, which is a contradiction.
\medskip

{\bf Case IIb.} $u_{6,0}=9$, $\mathrm{dim}\psi_{6,0}(U_{6,0})\le 3$.  ($\Rightarrow K_X^3\ge \frac{10}{21}$)

One has $M_6|_F\ge C+C_{-1}$, where $C_{-1}$ is a moving curve on $F$ satisfying $h^0(F, C_{-1})\ge 6$. By the argument in Case i of Proposition \ref{prop:P6} and the assumption $\xi\neq \frac{4}{5}$, we know $(C_{-1}\cdot C)\ge 4$, $\xi\ge\frac{6}{7}$, $\beta=\frac{1}{2}$, $u_{6,-1}\le 6$, $u_{6,-3}\le 3$, $u_{6,-5}=1$ and $P_6(X)\le 29$.

We have $$(\pi^*(K_X)|_F)^2\ge\frac{(\pi^*(K_X)|_F\cdot C)+(\pi^*(K_X)|_F\cdot C_{-1})}{6}\ge \frac{10}{21}.$$ In particular, we have $K_X^3\ge\frac{10}{21}$.
\medskip

{\bf Case IIIa.} $u_{6,0}=8$, $\mathrm{dim}\psi_{6,0}(U_{6,0})=4$. ($\Rightarrow K_X^3\ge\frac{1}{2}$)

For the similar reason, we have $(M_6|_F\cdot C)=6$ and so $\xi=1$. By the same argument as in {\bf Case IIa}, we have $u_{6,-1}\le 6$, $u_{6,-3}\le 3$, $u_{6,-5}=1$, $P_6(X)\le 28$. In particular, we have $K_X^3\ge\frac{1}{2}$.
\medskip

{\bf Case IIIb.}  $u_{6,0}=8$, $\mathrm{dim}\psi_{6,0}(U_{6,0})\le 3$, $M_6|_F\ge C+C_{-1}$ with $(C_{-1}\cdot C)=4$.  ($\Rightarrow K_X^3\ge \frac{10}{21}$)

By the same argument as in {\bf Case i} of Proposition \ref{prop:P6} (Subcase i.c $\sim$ Subcase i.f), we  know $\xi\ge\frac{6}{7}$, $\beta=\frac{1}{2}$, $u_{6,-1}\le 6$, $u_{6,-3}\le 3$, $u_{6,-5}=1$, $P_6(X)\le 28$. By the same argument as in {\bf Case IIb}, we have $K_X^3\ge\frac{10}{21}$.
\medskip

{\bf Case IIIc.} $u_{6,0}=8$, $\mathrm{dim}\psi_{6,0}(U_{6,0})\le 3$, $M_6|_F\ge C+C_{-1}$ with $(C_{-1}\cdot C)\le 3$ ($\Rightarrow K_X^3\ge\frac{5}{12}$).

Since $h^0(F,C_{-1})\geq 5$, we have $C_{-1}\ge C+C'$, where $C'$ is a moving curve on $F$ satisfying $h^0(F, C')\ge 3$. If $|C'|$ and $|C|$ are composed of the same pencil, we have $\beta\ge\frac{2}{3}$. Since $\alpha(7)>2$, we have $\xi\ge\frac{5}{7}$. We get $\alpha(5)\ge(5-1-1-\frac{1}{\beta})\cdot \xi\ge\frac{15}{14}>1$. Thus $\xi\ge\frac{4}{5}$. By our assumption, we have $\xi>\frac{4}{5}$, which gives $\alpha(6)>2$. Then $\varphi_{6,X}$ is birational, which contradicts to our assumption. So $|C'|$ and $|C|$ are not composed of the same pencil.

Therefore we have $M_6|_F\ge 2C+C'$. By \cite[Proposition 3.7]{CHP} (iii), we have $\xi\ge\frac{3}{4}$ ($m_1=6$, $j=2$, $\delta_1=2$) by successfully taking $n=6$, $7$. Thus we have $$(\pi^*(K_X)|_F)^2\ge\frac{2\xi+1}{6}\ge \frac{5}{12}.$$ In particular, we have $K_X^3\ge\frac{5}{12}$.
\medskip

{\bf Case IVa.} $u_{6,-1}=7$, $\mathrm{dim}\psi_{6,0}(U_{6,-1})\le 3$. ($\Rightarrow K_X^3>0.4714$)

When $(S_{6,-1}|_F\cdot C)\le 3$, we have $S_{6,-1}|_F\ge 2C+C_{-2}$, where $C_{-2}$ is a moving curve on $F$ satisfying $h^0(F, C_{-2})\ge 3$. By the same argument as in the last part of {\bf Case 3} of Proposition \ref{prop:P6}, $\varphi_{6,X}$ is birational, a contradiction.

Thus we only need to consider the case when $(S_{6,-1}|_F\cdot C)\ge 4$.
We have $S_{6,-1}|_F\ge C+C_{-1}$, where $C_{-1}$ is a moving curve on $F$ satisfying $h^0(F,C_{-1})\ge 4$.

If $(C_{-1}\cdot C)\ge 4$, $\varphi_{6,X}$ is birational by Theorem \ref{k3} (i.2) ($m_1=6$, $j_1=j_2=1$, $\delta_2=4$, $\mu=1$, $\beta=\frac{1}{2}$), which is a contradiction.

Thus $(C_{-1}\cdot C)\le 3$. So we have $C_{-1}\ge C+C'$, where $C'$ is a moving curve on $F$. When $|C'|$ and $|C|$ are not composed of the same pencil, we have $(C'\cdot C)\ge 2$. By Theorem \ref{k3} (ii.1), we have $\xi\ge\frac{5}{7}$ ($n=6$, $m_1=6$, $j_1=1$, $j_2=2$, $\mu=1$, $\xi=\frac{2}{3}$, $\delta_2=2$). Theorem \ref{k3} (ii.2) implies that $\varphi_{6,X}$ is birational ($m_1=6$, $j_1=1$, $j_2=2$, $\delta_2=2$, $\mu=1$, $\xi=\frac{5}{7}$), which is a contradiction.

So $C'\sim C$ and we have $S_{6,-1}|_F\ge 3C$. By  \cite[Proposition 3.5]{CHP}, we have $\beta\ge\frac{4}{7}$. Since $\alpha(7)>2$, we have $\xi\geq \frac{5}{7}$. Since
$\alpha(8)\ge(8-1-1-\frac{1}{\beta})\cdot \xi>3$, we have $\xi\ge\frac{3}{4}$.
Denote by $\xi_{6,-1}$ the intersection number $(\pi^*(K_X)|_F\cdot S_{6,-1}|_F)$.
We have $\xi_{6,-1}\ge \beta (C\cdot S_{6,-1}|_F)\ge \frac{16}{7}$.  Besides,
Kawamata-Viehweg vanishing theorem implies
$$|K_{X'}+M_6||_F\lsgeq |K_F+S_{6,-1}|_F|\lsgeq |C+S_{6,-1}|_F|,$$
which directly implies $7\pi^*(K_X)|_F\ge C+S_{6,-1}|_F$ since $|C+S_{6,-1}|_F|$ is base point free.  Noting that a general $S_{6,-1}|_F$ can be smooth, nef and big, we may use the
the similar method to that of \cite[Proposition 3.6]{CHP} (2.1) to obtain the following inequality, for
any $n\ge 8$,
$$(n+1)\xi_{6,-1}\ge \ulcorner (n-6)\xi_{6,-1}\urcorner+16,$$
where one notes that $((K_F+S_{6,-1}|_F)\cdot S_{6,-1}|_F)\ge 16$.  Take $n=8$, we get $\xi_{6,-1}\ge\frac{7}{3}$. Take $n=10$, we get $\xi_{6,-1}\ge\frac{26}{11}$. Take $n=9$, we get $\xi_{6,-1}\ge\frac{12}{5}$.  Since $7\pi^*(K_X)|_F\ge C+S_{6,-1}|_F$ and $\xi\ge\frac{3}{4}$, we have $(\pi^*(K_X)|_F)^2\ge\frac{63}{140}$. By \eqref{eq:estimation1.2}, we have $K_X^3>0.4714$. 
\medskip

{\bf Case IVb.} $u_{6,-1}=6$ and $\mathrm{dim}\psi_{6,0}(U_{6,-1})\le 3$. ($\Rightarrow K_X^3\ge\frac{11}{28}$)

If $(S_{6,-1}|_F\cdot C)\ge 4$, we have $\xi\ge\frac{3}{4}$ by \cite[Proposition 3.6]{CHP} (1.1) ($n=11$, $\delta=4$, $m_1=6$, $\beta=\frac{1}{2}$, $\xi=\frac{2}{3}$). By the similar reason to that in Case IVa, we have $7\pi^*(K_X)|_F\ge C+S_{6,-1}|_F$. Thus we get $K_X^3\ge(\pi^*(K_X)|_F)^2\ge\frac{11}{28}$.

If $(S_{6,-1}|_F\cdot C)\le 3$, we have $S_{6,-1}|_F\ge 2C+C_{-2}$, where $C_{-2}$ is a moving curve on $F$. When $|C_{-2}|$ and $|C|$ are composed of the same pencil, we have $\beta\ge\frac{4}{7}$ by  \cite[Proposition 3.5]{CHP}.  Besides, $\alpha(7)>2$ implies $\xi\geq \frac{5}{7}$. 
Thus we have $(\pi^*(K_X)|_F)^2\ge\frac{20}{49}$. When  $|C_{-2}|$ and $|C|$ are not composed of the same pencil, we have $(C_{-2}\cdot C)\ge 2$. By Theorem \ref{k3} (ii.1), we have $\xi\ge\frac{5}{7}$ ($n=6$, $m_1=6$, $j_1=1$, $j_2=2$, $\mu=1$, $\xi=\frac{2}{3}$, $\delta_2=2$). Theorem \ref{k3} (ii.2) implies that $\varphi_{6,X}$ is birational ($n=6$, $m_1=6$, $j_1=1$, $j_2=2$, $\mu=1$, $\xi=\frac{5}{7}$, $\delta_2=2$), which contradicts to our assumption.
\medskip

{\bf Case V.} $u_{6,-2}=6$. ($\Rightarrow K_X^3>0.4771$)

If $(S_{6,-2}|_F\cdot C)\le 3$, we have $S_{6,-2}|_F\ge 2C+C_{-2}$, where $C_{-2}$ is a moving curve on $F$. By the same argument as in the last part of {\bf Case 4} of Proposition \ref{prop:P6}, we conclude that $\varphi_{6,X}$ is birational, a contradiction.

So we have $(S_{6,-2}\cdot C)\ge 4$ in this case.  In fact, the case $(S_{6,-2}\cdot C)\ge 5$ has been treated in {\bf Case 4} of Proposition \ref{prop:P6}, which shows that $\varphi_{6,X}$ is birational (a contradiction). Hence $(S_{6,-2}|_F\cdot C)=4$. Theorem \ref{k2} (i) implies that $\varphi_{6,X}$ is birational if $\beta>\frac{1}{2}$. Thus we have $\beta=\frac{1}{2}$. By Theorem \ref{k2} (iii) and (iv), we have $\xi\geq \frac{3}{4}$ and, for any $n\ge 6$,  $$(n+1)\xi\ge\ulcorner(n-5)\xi \urcorner+4.$$  Take $n=8$, we get $\xi\ge\frac{7}{9}$.
Take $n=9$, we get $\xi\ge\frac{4}{5}$. By our assumption, we have $\xi>\frac{4}{5}$. Take $n=10$ in the above inequality, we have $\xi\ge\frac{9}{11}$. By Theorem \ref{k2} (iii) ($m_1=6$, $j=2$, $(S_{6,-2}|_F\cdot C)=4$), we have $(\pi^*(K_X)|_F)^2\ge\frac{5}{11}$. By \eqref{eq:estimation1.2}, we have $K_X^3>0.4771$.
\medskip

{\bf Case VI.} $u_{6,-3}=4$. ($\Rightarrow K_X^3>0.4734$)

If $(S_{6,-3}|_F\cdot C)\ge 4$, $\varphi_{6,X}$ is birational by Theorem \ref{k2} (i) ($m_1=6$, $j=3$, $\tilde{\delta}=4$, $\beta=\frac{1}{2}$, $\xi=\frac{5}{7}$), which contradicts to our assumption.
Thus we have $(S_{6,-3}|_F\cdot C)\le 3$. So $S_{6,-3}|_F\ge C+C_{-1}$, where $C_{-1}$ is a moving curve on $F$.

If $|C_{-1}|$ and $|C|$ are not composed of the same pencil, we have $(C_{-1}\cdot C)\ge 2$. By Theorem \ref{k3} (i.1), we have $\xi\ge\frac{4}{5}$ ($n=4$, $m_1=6$, $j_1=3$, $j_2=1$, $\beta=\frac{1}{2}$, $\delta_2=2$). Our assumption implies that we have $\xi>\frac{4}{5}$. Theorem \ref{k3} (i.2) implies that $\varphi_{6,X}$ is birational ($m_1=6$, $j_1=3$, $j_2=1$, $\xi>\frac{4}{5}$, $\beta=\frac{1}{2}$), which is a contradiction.

Thus $|C_{-1}|$ and $|C|$ are composed of the same pencil.  So $S_{6,-3}|_F\ge 2C$. By  \cite[Proposition 3.5]{CHP}, we have $\beta\ge\frac{5}{9}$. By Theorem \ref{kf} (i.1), we have $\xi\ge\frac{4}{5}$ ($n=4$, $m_1=6$, $j_1=3$, $j_2=2$, $\beta=\frac{5}{9}$). So $(\pi^*(K_X)|_F)^2\ge\frac{4}{9}$. By \eqref{eq:estimation1.2}, we have $K_X^3>0.4734$.
\medskip

{\bf Case VII.} $u_{6,-5}=2$. ($\Rightarrow K_X^3>0.4362$)

If $|S_{6,-5}|_F|$ and $|C|$ are not composed of the same pencil, we have $(S_{6,-5}|_F\cdot C)\ge 2$. By Theorem \ref{k2} (iv), we get $\xi\geq \frac{5}{7}$ by taking $n=6$ and $\xi\geq \frac{3}{4}$ by taking $n=7$.  By Theorem \ref{k2} (iii), we have $(\pi^*(K_X)|_F)^2\ge\frac{19}{44}$. By \eqref{eq:estimation1.2}, we have $K_X^3>0.4746$.

If $|S_{6,-5}|_F|$ and $|C|$ are composed of the same pencil, we have $\beta\ge\frac{6}{11}$. By Theorem \ref{kf}(i.1), we get $\xi\geq \frac{5}{7}$ by taking $n=6$ and $\xi\geq \frac{3}{4}$ by taking $n=7$. We have $(\pi^*(K_X)|_F)^2\ge\frac{9}{22}$. By \eqref{eq:estimation1.2}, we obtain $K_X^3>0.4362$.
\medskip

Now we prove (2). By the results of {\bf Case I} $\sim$ {\bf Case VII},  we only need to consider the case where $u_{6,0}\le 7$, $u_{6,-1}\le 5$, $u_{6,-3}\le 3$, $u_{6,-5}=1$. We have $\mu\ge\frac{7}{6}$ and $\beta\ge\frac{7}{13}$.  As $\alpha(7)>2$, we have $\xi\geq \frac{5}{7}$. So we get $K_X^3\ge\frac{35}{78}>\frac{11}{28}$.

For (3), by the results of {\bf Case I} $\sim$ {\bf Case VII}, we are left to treat the following cases:
\begin{enumerate}
\item[(3.1)] $u_{6,0}=8$, $\xi\ge\frac{3}{4}$, $(\pi^*(K_X)|_F)^2\ge\frac{5}{12}$, $u_{6,-4}=3$;
\item[(3.2)] $u_{6,0}=8$, $\xi\ge\frac{3}{4}$, $(\pi^*(K_X)|_F)^2\ge\frac{5}{12}$, $u_{6,-1}\le 6$, $u_{6,-2}\le 5$, $u_{6,-3}\le 3$, $u_{6,-4}\le 2$, $u_{6,-5}=1$;
\item[(3.3)] $u_{6,0}\le 7$, $u_{6,-1}\le 6$, $u_{6,-2}\le 5$, $u_{6,-3}\le 3$, $u_{6,-4}\le 2$, $u_{6,-5}=1$;
\item[(3.4)] $u_{6,0}\le 7$, $u_{6,-1}\le 6$, $u_{6,-2}\le 5$, $u_{6,-3}\le 3$, $u_{6,-4}= 3$, $u_{6,-5}=1$.
\end{enumerate}
We first treat (3.1). If $|S_{6,-4}|_F|$ and $|C|$ are composed of the same pencil, we have $\beta\ge\frac{3}{5}$. We get $K_X^3\ge\beta\xi\geq\frac{9}{20}>0.4328$. If $|S_{6,-4}|_F|$ and $|C|$ are not composed of the same pencil, we have $(\pi^*(K_X)|_F\cdot S_{6,-4}|_F)\ge 1$.  We have $K_X^3>0.4328$ by \eqref{eq:estimation1.2}.
Next we treat $(3.2)$. We have $\mu\ge\frac{7}{6}$. By Inequality \eqref{cri}, we have $\beta\ge\frac{7}{13}$. Then we get $K_X^3\ge\frac{49}{104}>0.4328$.
For (3.3), we have $\mu\ge\frac{4}{3}$ and $\beta\ge\frac{4}{7}$. Since $\alpha(5)>1$, we get $\xi\ge\frac{4}{5}$. Our assumption implies that we have $\xi>\frac{4}{5}$. Thus we have $\alpha(6)>2$, which implies that $\varphi_{6,X}$ is birational, which is a contradiction. So $(3.3)$ does not occur. Finally, for $(3.4)$, we have $\mu\geq \frac{7}{6}$ and $\beta\geq \frac{7}{13}$. Since we have $\xi\geq \frac{5}{7}$, so we get $K_X^3\geq \frac{35}{78}>0.4328$. 

Now we consider $(4)$. By the arguments in {\bf Case I}-{\bf Case VII}, we only need to treat the following cases:

\begin{enumerate}
\item[(4.1)] $u_{6,0}=8$, $u_{6,-1}\le 6$, $u_{6,-2}=5$, $u_{6,-3}\le 3$, $u_{6,-5}=2$;
\item[(4.2)] $u_{6,0}\le 8$, $u_{6,-1}\le 6$, $u_{6,-2}\le 4$, $u_{6,-3}\le 3$, $u_{6,-5}=2$;
\item[(4.3)] $u_{6,0}\le 8$, $u_{6,-1}\le 6$, $u_{6,-2}=5$, $u_{6,-3}\le 3$, $u_{6,-5}=1$;
\item[(4.4)] $u_{6,0}\le 8$, $u_{6,-1}\le 6$, $u_{6,-2}\le 4$, $u_{6,-3}\le 3$, $u_{6,-5}=1$;
\item[(4.5)] $u_{6,0}\leq 7$, $u_{6,-1}\le 6$, $u_{6,-2}=5$, $u_{6,-3}\le 3$, $u_{6,-5}=2$.
\end{enumerate}

(4.1) By the argument in {\bf Case VII}, we only need to treat the case when $|S_{6,-5}|_F|$ and $|C|$ are composed of the same pencil. In particular, we have $\beta\ge\frac{6}{11}$.

We claim that $(S_{6,-2}|_F\cdot C)\le 3$. Otherwise,  Theorem \ref{k2} (i) and (ii) imply that $\varphi_{6,X}$ is birational ($m_1=6$, $j=2$, $\tilde{\delta}\ge 4$, $\xi\ge\frac{2}{3}$, $\beta\ge\frac{6}{11}$), which contradicts to our assumption. Thus we have $S_{6,-2}|_F\ge C+C_{-1}$, where $C_{-1}$ is a moving curve on $F$ satisfying $h^0(F,C_{-1})\ge 3$.

If $|C_{-1}|$ and $|C|$ are composed of the same pencil, we have $S_{6,-2}|_F\ge 3C$. By Theorem \ref{kf} (ii.1), we have $\xi\ge\frac{4}{5}$ ($n=4$). Theorem \ref{kf} (ii.2) implies that $\varphi_{6,X}$ is birational ($m_1=6$, $j_1=2$, $j_2=3$, $\xi=\frac{4}{5}$, $\mu=1$), which contradicts to our assumption. Thus $|C_{-1}|$ and $|C|$ are not composed of the same pencil. In particular, we have $(C_{-1}\cdot C)\ge 2$. By Theorem \ref{k3} (i.1), we have  $\xi\ge\frac{4}{5}$ ($n=4$, $m_1=6$, $j_1=2$, $j_2=1$, $\beta=\frac{6}{11}$, $\xi=\frac{5}{7}$, $\delta_2=2$) .

By the arguments in {\bf Case IIIa}-{\bf Case IIIc,} 
we have $M_6|_F\ge 2C+C'$, where $C'$ is a moving curve on $F$ satisfying $h^0(F, C')\ge 3$. Moreover, $|C'|$ and $|C|$ are not composed of the same pencil. Thus we have $$(\pi^*(K_X)|_F)^2\ge\frac{2\xi+\beta(C\cdot C')}{6}\ge \frac{74}{165}.$$
By \eqref{eq:estimation1.2} ($m=6$, $l=5$), we have $K_X^3>0.4766$. Thus $(4)$ holds under the assumption of $(4.1)$.

 $(4.2)$. By the argument in {\bf Case VII}, we only need to consider the case when $|S_{6,-5}|_F|$ and $|C|$ are composed of the same pencil. By  \cite[Proposition 3.5]{CHP}, we have $\beta\ge\frac{6}{11}$. Since $P_6(X)\ge 28$, we have $\mu\ge\frac{7}{6}$ by our assumption.  Note that we have $\xi\geq \frac{5}{7}$. Since $\alpha(8)>3$, we get
 $\xi\geq \frac{3}{4}$. 
  Thus we have $K_X^3\ge \mu\beta\xi >0.4772$. So $(4)$ holds.

$(4.3)$ \& $(4.5)$. We have $\mu\ge\frac{7}{6}$ by our assumption. By Inequality \eqref{cri}, we have $\beta\ge\frac{7}{13}$.  Since $u_{6,-2}=5$, by the same argument as in $(4.1)$, we get $\xi\ge\frac{4}{5}$.  Hence $K_X^3\ge \mu\beta\xi\ge\frac{98}{195}>\frac{1}{2}$.  So $(4)$ holds.

 $(4.4)$. We have $\mu\ge\frac{4}{3}$ by our assumption. By Inequality \eqref{cri}, we have $\beta\ge\frac{4}{7}$. Since $\alpha(5)>1,$ we have $\xi\ge\frac{4}{5}$. By our assumption, we have $\xi>\frac{4}{5}$. Thus $\alpha(6)>2$, which implies that $\varphi_{6,X}$ is birational, which contradicts to our assumption. So $(4.4)$ does not occur.

 For $(5)$,  by the arguments in Case V ($\Rrightarrow \beta=\frac{1}{2}$) and Case VI ($\Rrightarrow \beta\geq \frac{5}{9}$), we see that $u_{6,-2}=6$ and $u_{6,-3}=4$ can not hold simultaneously.  Combining all the arguments in {\bf Case I}-{\bf Case VII}, we only need to consider the following cases:

 \begin{enumerate}
 \item[(5.1)] $u_{6,0}\le 8$, $u_{6,-1}=7$, $\beta\ge\frac{4}{7}$, $u_{6,-2}\le 5$, $u_{6,-3}=4$, $u_{6,-4}\le 3$, $u_{6,-5}\le 2$, $u_{6,-6}=1$;

 \item[(5.2)] $u_{6,0}\le 8$, $u_{6,-1}=7$, $\beta\ge\frac{4}{7}$, $u_{6,-2}\le 5$, $u_{6,-3}\le 3$, $u_{6,-4}\le 3$, $u_{6,-5}\le 2$, $u_{6,-6}=1$;

\item[(5.3)]  $u_{6,0}\le 8$, $u_{6,-1}\leq 6$, $u_{6,-2}=6$, $u_{6,-3}\leq 3$, $u_{6,-4}\le 3$, $u_{6,-5}\le 2$, $u_{6,-6}=1$;

\item[(5.4)]  $u_{6,0}\le 8$, $u_{6,-1}\leq 6$, $u_{6,-2}\le 5$, $u_{6,-3}=4$, $u_{6,-4}\le 3$, $u_{6,-5}\le 2$, $u_{6,-6}=1$.
\end{enumerate}

$(5.1)$.  By the argument in {\bf Case VI}, we have $\xi\ge\frac{4}{5}$. By our assumption, we have $\mu\ge\frac{7}{6}$. Thus we have $$K_X^3\ge \mu\beta\xi\ge\frac{8}{15}.$$

$(5.2)$. By the assumption, we have $\mu\ge\frac{4}{3}$. Similar to the case $(4.4)$, we see that $\alpha(5)>1$ and $\alpha(6)>2$, which implies the birationality of $\varphi_{6,X}$ (a contradiction).

$(5.3)$ \& $(5.4)$.  Similar to the case $(5.2)$, one has $\mu\ge\frac{4}{3}$, which gives a contradiction.
\end{proof}

\subsection{The classification of ${\mathbb B}^{(5)}(X)$}

\begin{lem}\label{lem:Pm}  Let $X$ be a minimal projective 3-fold of general type with $p_g(X)=2$, $d_1=1$, $\Gamma\cong \bP^1$. Assume that $F$ is a $(1,2)$-surface. Then
\begin{itemize}
\item[(i)] $P_3(X)\geq P_2(X)+2$;
\item[(ii)] $P_4(X)\geq P_3(X)+4$;
\item[(iii)] $P_6(X)\geq P_5(X)+7$.
\end{itemize}
\end{lem}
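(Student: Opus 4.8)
The plan is to compare consecutive plurigenera by restricting $|mK_{X'}|$ to a general fibre $F$ and controlling the restricted image through the extension theorem. The first thing I would record is the structural input forced by $p_g(X)=2$: since $d_1=1$ and $\Gamma\cong\bP^1$, the system $|M|=\Mov|K_{X'}|$ is a base point free pencil with $h^0(M)=2$, so $M\sim F$ and we may write $K_{X'}\sim F+Z$ with $Z=\text{Fix}|K_{X'}|\geq 0$ effective and integral. Moreover $Z\neq 0$, for otherwise $\pi^*(K_X)\simQ F$ would give $K_X^3=(F^3)=0$, contradicting that $X$ is of general type. As $F$ is a fibre, $F|_F\equiv 0$, whence $K_{X'}|_F=K_F$ and $Z|_F\sim K_F$.

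For each $m\geq 2$ I would then use the tautological sequence
\[0\to\OO_{X'}(mK_{X'}-F)\to\OO_{X'}(mK_{X'})\to\OO_F(mK_F)\to 0,\]
together with $mK_{X'}-F\simQ (m-1)K_{X'}+Z$ and $Z\geq 0$, to obtain
\[P_m(X)\geq h^0(mK_{X'}-F)+u_{m,0}\geq P_{m-1}(X)+u_{m,0},\]
where $u_{m,0}=\dim\im\big(H^0(mK_{X'})\to H^0(F,mK_F)\big)$, compatibly with Definition \ref{twomaps}. So everything reduces to lower bounds for $u_{m,0}$.

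The key step is to bound $u_{m,0}$ from below. Set $q=\rounddown{m/2}$. Since $(m-q)K_{X'}-qF\simQ (m-2q)F+(m-q)Z\geq 0$, we have $|mK_{X'}|\lsgeq |q(K_{X'}+F)|$; multiplying the surjection $H^0(q(K_{X'}+F))\twoheadrightarrow H^0(F,qK_F)$ of Theorem \ref{KaE} (valid for $q\geq 2$) by a fixed section of $mK_{X'}-q(K_{X'}+F)$ whose restriction to $F$ is the nonzero section of $H^0(F,(m-q)K_F)$ cutting out $(m-q)\,Z|_F$, I obtain $u_{m,0}\geq h^0(F,qK_F)=P_{\rounddown{m/2}}(F)$. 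For the $(1,2)$-surface one computes $P_1(F)=2$, $P_2(F)=4$, $P_3(F)=6$. Taking $m=4$ gives $u_{4,0}\geq 4$, hence (ii); taking $m=3$, where $q=1$ lies outside the range of Theorem \ref{KaE}, I would instead use $u_{3,0}\geq 2$ exactly as in the proof of Proposition \ref{prop:P3} (via \cite[Lemma 4.6]{MCNoe}), which gives (i).

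For (iii) the same recipe yields only $P_6\geq P_5+u_{6,0}\geq P_5+P_3(F)=P_5+6$, one short of the claim, and I expect this last unit to be the main obstacle. It is genuinely sharp: the threefolds $X_{16}\subset\bP(1,1,2,3,8)$ and $X_{14}\subset\bP(1,1,2,2,7)$ satisfy $P_6-P_5=7$, so no softer argument can do better. To recover the missing section I would analyse $|6K_{X'}||_F$ on the $(1,2)$-fibre directly: by Kawamata--Viehweg vanishing (as in Lemma \ref{lem:restric vol}) one has $|6K_{X'}||_F\lsgeq |K_F+\roundup{4\pi^*(K_X)|_F}|$, and combining $(\pi^*(K_X)|_F)^2\geq\tfrac12$ with Riemann--Roch on $F$ should force $u_{6,0}\geq 7$ as soon as $(\pi^*(K_X)|_F)^2>\tfrac12$. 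In the boundary case $(\pi^*(K_X)|_F)^2=\tfrac12$, where the Hodge index theorem rigidifies both $\pi^*(K_X)|_F$ and $Z$, I would instead squeeze the extra section out of the fixed part, proving $h^0(6K_{X'}-F)=h^0(5K_{X'}+Z)\geq P_5+1$ from $Z\neq 0$; the double filtration of Definition \ref{twomaps} along the genus $2$ pencil $|G|$ on $F$ is the natural bookkeeping device here. This case distinction is where I expect the real work to lie, the remaining steps being routine.
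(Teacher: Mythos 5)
Your parts (i) and (ii) are correct and follow essentially the paper's own route: the reduction $P_m\geq P_{m-1}+u_{m,0}$ (valid because $mK_{X'}-F\geq (m-1)K_{X'}$), together with $u_{4,0}\geq h^0(2K_{F_0})=4$ from Theorem \ref{KaE} and $u_{3,0}\geq 2$ from the cited lemma of [Che04], is exactly what the authors do. The problem is part (iii), where you correctly locate the difficulty ($u_{6,0}\geq P_3(F_0)=6$ is one short) but neither of your proposed fixes works as written. The boundary-case argument is a non sequitur: from $6K_{X'}-F\sim 5K_{X'}+Z$ with $Z\geq 0$ you can only conclude $h^0(6K_{X'}-F)\geq h^0(5K_{X'})=P_5$; an effective summand $Z\neq 0$ does not produce an extra section --- $h^0(D+Z)=h^0(D)$ happens precisely when $Z$ sits in the fixed part of $|D+Z|$, and $Z$ here \emph{is} the fixed part of $|K_{X'}|$, so this is the expected behaviour, not an exception. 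Your justification of $Z\neq 0$ is also flawed: if $Z=0$ then $\pi^*(K_X)\simQ F-E_\pi$ and the projection formula gives $K_X^3=(\pi^*(K_X))^2\cdot F=(\pi^*(K_X)|_F)^2>0$, so no contradiction arises. Finally, the other branch (``Riemann--Roch should force $u_{6,0}\geq 7$ when $(\pi^*(K_X)|_F)^2>\frac12$'') is asserted rather than proved, and splitting on the value of $(\pi^*(K_X)|_F)^2$ is not the relevant dichotomy.

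The missing unit is obtained in the paper by a degree count on the genus-two curve $C$, not by a case division. One has $|M_6||_F\lsgeq \Mov|3\sigma^*(K_{F_0})|$ exactly as you found, and $(3\sigma^*(K_{F_0})\cdot C)=3(K_{F_0}^2)=3$, so a general member of $\Mov|3\sigma^*(K_{F_0})|$ meets $C$ in at most $3$ points. On the other hand $\alpha(6)=(6-1-\frac{1}{\mu}-\frac{1}{\beta})\xi\geq 2\cdot\frac{2}{3}>1$ (using $\mu\geq 1$, $\beta\geq\frac12$, $\xi\geq\frac23$), so the vanishing-theorem argument behind inequality \eqref{kieq1} yields $(M_6|_F\cdot C)\geq \deg(K_C)+2=4$. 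Hence $|M_6||_F$ cannot coincide with $\Mov|3\sigma^*(K_{F_0})|$ and must strictly contain it, giving $u_{6,0}\geq 7$ and $P_6\geq P_5+7$. Replacing both branches of your case analysis in (iii) by this single intersection-number argument repairs the proof.
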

\begin{proof}
Since $|3K_{X'}||_F\lsgeq |K_{X'}+F_1+F|$ for two distinct general fibers of $f$, it follows from \cite[Lemma 4.6]{MCNoe} that
\begin{align*}
|M_3||_F\lsgeq |C|,
\end{align*}
which implies that $P_3(X)\ge P_2(X)+2$.

Since $|4K_{X'}|\lsgeq |2(K_{X'}+F)|$ for a general fiber of $f$, by Theorem \ref{KaE}, we have
\begin{align*}
|M_4||_F\lsgeq |2\sigma^*(K_{F_0})|,
\end{align*}
which implies that $P_4(X)\ge P_3(X)+4$.

By the same argument as above, we get $|M_6||_F\lsgeq \mathrm{Mov}|\sigma^*(3K_{F_0})|$. $\alpha(6)>1$ implies that $(M_6\cdot C)\ge 4$. Thus  $|M_6||_F\neq \mathrm{Mov}|\sigma^*(3K_{F_0})|$, which implies that we have $P_6(X)\ge P_5(X)+7$.
\end{proof}

Collecting all above results of this section, we want to classify those basket ${\mathbb B}(X)$
which satisfy the following properties:
\begin{enumerate}
\item $3\le P_2(X)\le 4$;
\item $P_2(X)+2\le P_3(X)\le 8$;
\item $P_3(X)+4\le P_4(X)\le 13$;
\item $P_4(X)+4\le P_5(X)\le 21$;
\item $P_5(X)+7\le P_6(X)\le 31$;
\item $\chi(\mathcal{O}_X)=0, -1;$
\item $K_{X}^3\ge\frac{5}{14}$;
\item If $P_6(X)\ge 26$, then $K_X^3\ge \frac{11}{28}$;
\item If $P_6(X)\ge 27$, we have $K_X^3>0.4328$;
\item If $P_6(X)\ge 28$, we have $K_X^3>0.4714$;
\item If $P_6(X)\ge 31$, we have $K_X^3\ge\frac{8}{15}$.
\end{enumerate}
\medskip

This situation naturally fits into the hypothesis of \cite[(3.8)]{EXP1} from which we can
list all the possibilities for $B^{(5)}(X)$.  To be precise,
{\footnotesize
$$B^{(5)}=\{ n^5_{1,2} \times (1,2),n^5_{2,5} \times (2,5), n^5_{1,3}
\times (1,3), n^5_{1,4}  \times (1,4), n^5_{1,5} \times
(1,5),\cdots\}$$} with
$$B^{(5)}\left\{
 \begin{array}{l}
  n^5_{1,2}=3 \chi(\OO_X) +6P_2- 3 P_3 + P_4 - 2 P_5 + P_6+ \sigma_5 ,\\
 n^5_{2,5}= 2 \chi(\OO_X)-P_3 +   2 P_5  - P_6- \sigma_5 \\
 n^5_{1,3}= 2 \chi(\OO_X)+2P_2+ 3 P_3- 3 P_4 -P_5 + P_6   + \sigma_5 , \\
 n^5_{1,4}= \chi(\OO_X) -3P_2+  P_3 +2 P_4 -P_5-\sigma_5 \\
  n^5_{1,r}=n^0_{1,r}, r \geq 5
\end{array} \right. $$
where $\sigma_5=\sum_{r\geq 5}n_{1,r}^0\geq 0$ and
$$\sigma_5\leq 2\chi(\OO_X)-P_3+2P_5-P_6.$$
Note also that, by our definition, each of the above coefficients satisfies $n_{*,*}^0\geq 0$.

Inputing above constraints, our independently written  computer programs output a raw list for $\{B_X^{(5)}, P_2(X),\chi(\OO_X)\}$. Taking into account those possible packings, we finally get the list    ${\mathbb S}_2$   which consists of 263 elements. Being aware of the length of this paper, we do not list the set ${\mathbb S}_2$, which can be found, however, at
	\begin{center}
		\verb|http://www.dima.unige.it/~penegini/publ.html|
	\end{center}   

\subsection{Proof of Theorem \ref{thm:main p_g2}}
\begin{proof} Theorem \ref{thm:main p_g2} follows directly from Lemma \ref{lem:phi6 general case}, Proposition \ref{prop:(2,3) surface}, Remark \ref{rem:xi=2/3}, Lemma \ref{lem:charac K1},  Proposition \ref{prop:P2}, Proposition \ref{prop:P3}, Proposition \ref{prop:P4}, Proposition \ref{prop:P5}, Proposition \ref{prop:P6} and Proposition \ref{prop:esti of volume}.
\end{proof}


\end{document}